\newtheorem{definition}{Definition}
\newtheorem{lemma}{\bf Lemma}
\newtheorem{proposition}{\bf Proposition}
\newtheorem{theorem}{\bf Theorem}
\newtheorem{corollary}{\bf Corollary}
\renewenvironment{proof}{\noindent {\bf Proof: }}{\rm\\}
\theoremstyle{definition}
\newtheorem{remark}{Remark}{\rm}
\newtheorem{example}[subsection]{Example}{\rm}
\newcommand{\R}{\mathbb{R}}
\newcommand{\ball}{\mathcal{B}}
\begin{document}

\title{On local convergence
of the method of alternating projections}
\author{Dominikus Noll$^*$, Aude Rondepierre$^\dag$}
\thanks{$^*$Universit\'e de Toulouse, Institut de Math\'ematiques, 118, route de Narbonne, 31062 Toulouse, France,
Tel. 0033 5 61 55 86 22, Fax. 0033 5 61 55 83 85. E-mail  {\tt noll@math.univ-toulouse.fr}. Corresponding author.}
\thanks{$^\dag$Universit\'e de Toulouse, Institut de Math\'ematiques, 118 route de Narbonne, 31062 Toulouse, France. Tel. 0033 5 61 55 76 48. E-mail: {\tt aude.rondepierre@insa-toulouse.fr}}
\date{August 11, 2014}

\maketitle

\begin{abstract}
The method of alternating projections is a classical tool to solve feasibility
problems.
Here we prove local convergence of alternating projections between subanalytic sets $A,B$
under a mild regularity hypothesis on one of the sets.  We show that the speed of convergence is 
$\mathcal O(k^{-\rho})$ for some $\rho\in (0,\infty)$.

\vspace*{.1cm}
\noindent {\bf Keywords.}
Alternating projections $\cdot$ local convergence  $\cdot$ subanalytic set $\cdot$ separable intersection $\cdot$ tangential
intersection  $\cdot$ H\"older regularity $\cdot$ Gerchberg-Saxton error reduction

\vspace*{.1cm}
\noindent
{\bf AMS Classification.}
Primary: 65K10. Secondary: 
90C30, 32B20, 47H04, 49J52
\end{abstract}

\section{Introduction}
The method of alternating projections is a classical tool to
solve the following feasibility problem: 
Given closed sets $A,B$ in $\mathbb R^n$, find a point $x^*\in A \cap B$.
Alternating projections can be traced back to the work of  Schwarz \cite{schwarz} in 1869, and were
popularized in lecture notes of von Neumann \cite{neumann} since the 1930s.
The method generates sequences $a_k\in P_A(b_{k-1})$,
$b_k\in P_B(a_k)$, where $P_A$, $P_B$ are the set-valued orthogonal projection
operators on $A$ and $B$. If the alternating sequence $a_k,b_k$ is bounded and satisfies
$a_k-b_k\to 0$, then each of its accumulation points
is a solution of the feasibility problem. 
The fundamental question is
when  such a sequence converges to a single limit point $x^*\in A \cap B$.
 
For convex sets alternating projections are globally convergent as soon as $A \cap B \not=\emptyset$, and
the survey \cite{bauschke-survey} gives an excellent state-of-art of the convex theory. 
In one of the earliest contributions to the nonconvex case, Combettes and Trussell \cite{combettes}
proved in 1990 that the set  of accumulation points
of a bounded sequence of alternating projections with $a_k-b_k\to 0$ is either a singleton
or a nontrivial compact
continuum. In 2013 it was shown in \cite{bauschke-noll}  by way of an example that 
the continuum case may indeed occur. This shows that without convexity a sequence of alternating projections
$a_k,b_k$ may fail to converge even when it is bounded and satisfies  $a_k-b_k\to 0$.

In 2008 Lewis and Malick \cite{malick} proved that a sequence $a_k,b_k$ of alternating
projections converges locally linearly
if $A,B$ are $C^2$-manifolds intersecting transversally. Expanding on this in 2009,  Lewis {\em et al.}
\cite{luke}  proved local linear convergence for general $A,B$ intersecting non-tangentially in the sense of
linear regularity,
where one of the sets is superregular. In 2013 Bauschke {\em et al.} {\cite{bauschke1,bauschke2}} investigate the case of non-tangential intersection further and prove linear convergence under 
weaker regularity and transversality hypotheses.

Here we prove local convergence  under less restrictive conditions, where  $A,B$ may
also intersect tangentially. 
We propose a new geometric concept, called {\em separable intersection}, which 
gives local convergence of alternating projections   when combined with
{\em H\"older regularity}, a mild 
hypothesis less restrictive than prox-regularity. 

Separable intersection has wide scope for applications, as it not only
includes non-tangential intersection, 
but goes beyond and allows also   a large variety of
cases where $A, B$ intersect   tangentially. In particular, we prove 
that closed subanalytic  sets $A,B$ {\em always}
intersect separably. This leads to the
central result that  alternating projections
between subanalytic sets converge locally with rate $\mathcal O(k^{-\rho})$ for some
$\rho\in (0,\infty)$  if one of the sets is H\"older regular with respect to the other. 
As these hypotheses are satisfied
in practical situations, we obtain a theoretical explanation
for the fact, observed in practice,  that even without convexity alternating projections converge
well in the neighborhood of $A\cap B$. As an application, we obtain a
local convergence proof for the classical Gerchberg-Saxton error reduction algorithm in phase retrieval.

The structure of the paper is as follows. Section \ref{tangential}
introduces the concept of separable intersection of two closed sets. 
Then $0$-separability is related to existing transversality concepts.
In section \ref{hoelder} we discuss
H\"older regularity and compare it to older
regularity concepts like prox-regularity, Clarke regularity, and superregularity.
The central chapter \ref{convergence} gives the convergence proof with rate
for sets intersecting separably. In section \ref{loja} we show that subanalytic sets intersect
separably and then deduce the convergence result for subanalytic sets.
Section \ref{loja} gives also some applications indicating the versatility of our convergence test.
In particular, we prove local convergence of  an averaged projection method related  to
in \cite[Corollary 12]{attouch}, where the authors use the Kurdyka-\L ojasiewicz inequality.
The final section \ref{examples} gives limiting examples. 

After the initial version \cite{initial} of this article, a concept related to our notion of $0$-separability,
called intrinsic transversality, was announced in \cite{obsolete}. 
We compare this to our own transversality and regularity concepts
in sections \ref{tangential} and \ref{hoelder}.

\section{Preparation}
Given a nonempty closed subset $A$ of $\mathbb R^n$, the projection
onto $A$ is the set-valued mapping $P_A$
associating with $x\in \mathbb R^n$ the nonempty set
\[
P_A(x)=\left\{a\in A: \|x-a\|=d_A(x)  \right\},
\]
where $\|\cdot\|$ is the Euclidean norm,  induced by the scalar product $\langle \cdot,\cdot\rangle$, 
and where
$d_A(x)=\min \{\|x-a\|: a\in A\}$.   The closed Euclidean ball with center $x$ and radius $r$ is
denoted $\ball(x,r)$.
We write $a\in P_A(b)$ if the projection is 
potentially set-valued, while $a=P_A(b)$ means it is unique.

A sequence of alternating projections between nonempty closed sets $A,B$ 
satisfies $b_k\in P_B(a_k), a_{k+1}\in P_A(b_k)$, $k\in \mathbb N$. 
We occasionally switch to the following index-free notation, which is standard in optimization: 
$$b\in P_B(a), a^+\in P_A(b), b^+\in P_B(a^+), {\rm etc.}$$ 
The sequence of alternating projections
is then $\dots, a,b,a^+,b^+,a^{++},b^{++},\dots$.
We refer to $a \to b \to a^+$, respectively $b\to a^+\to b^+$, 
as the building blocks of the sequence, where  it is always
understood that 
$b\in P_B(a)$, $a^+\in P_A(b)$, $b^+\in P_B(a^+)$, etc. 

Notions from nonsmooth analysis are covered by \cite{rock,mord}.  The proximal normal
cone to $A$ at $a\in A$ is the set
$N_A^p(a)=\{\lambda u: \lambda \geq 0, x \in P_A(x+u)\}$.
The normal cone to $A$ at $a\in A$ is the set 
$N_A(a)$ of vectors $v$ for which there exist $a_k\in A$ with $a_k \to a$ and $v_k\in N_A^p(a_k)$
such that $v_k\to v$. The Fr\'echet normal cone $\widehat{N}_A(a)$ to $A$ at $a\in A$
is the set of $v$ for which $\limsup_{A \ni a'\to a} \frac{\langle v,a'-a\rangle}{\|a'-a\|}\leq 0$; cf. \cite[(1.2)]{mord}.
We have the inclusions $N_A^p(a)\subset \widehat{N}_A(a)\subset N_A(a)$; cf. {\cite[Chapter 2.D and (1.6)]{mord} or \cite[Lemma 2.4]{bauschke1}}.
{For any function $f:\mathbb{R}^n\rightarrow \mathbb{R}\cup\{\infty\}$, the epigraph of $f$ is the set 
${\rm epi}f=\{(x,\xi)\in\mathbb{R}^n\times \mathbb{R} : \xi \geq f(x)\}$.}
The proximal subdifferential $\partial_pf(x)$ of a lower semi-continuous
function $f$ at $x\in {\rm dom}f$ is the set of vectors 
$v\in \mathbb R^n$ such that $(v,-1)\in N^p_{{\rm epi}f}(x,f(x))$; \cite[(2.81)]{mord}. The 
subdifferential $\partial f(x)$ of $f$ at $x\in {\rm dom} f$ is the set of $v$
satisfying $(v,-1)\in N_{{\rm epi} f}(x,f(x))$. 
The Fr\'echet subdifferential 
$\widehat{\partial}f(x)$ at $x\in {\rm dom} f$ is the set of $v\in \mathbb R^n$ such that
$(v,-1)\in \widehat{N}_{{\rm epi}f}(x,f(x))$, cf.  \cite[(1.51)]{mord}.

\section{Tangential and non-tangential intersection}
\label{tangential}

In this section we introduce the fundamental  concept
of separable intersection of sets $A,B$, which plays the central role
in our convergence theory. 

\begin{definition}
\label{separable}
{\bf (Separable intersection)}.
We say that $B$ intersects $A$  separably at $x^*\in A \cap B$ with exponent $\omega \in [0,2)$
and constant $\gamma>0$  if there exists a neighborhood
$U$ of $x^*$ such that for every building block $b\to a^+\to b^+$ in $U$,  the condition
\begin{eqnarray}
\label{angle}
\langle b-a^+,b^+-a^+\rangle 
\leq
\left( 1-\gamma \|b^+-a^+\|^\omega \right) \|b-a^+\| \|b^+-a^+\|
\end{eqnarray}
is satisfied.
\hfill $\square$
\end{definition}
We say that 
$B$ intersects $A$ separably at $x^*$ if (\ref{angle}) holds for {\em some} $\omega\in [0,2)$, $\gamma > 0$. 
If it is also true that $A$ intersects $B$ separably, that is, if the analogue of (\ref{angle}) holds 
for building blocks $a \to b \to a^+$, then we obtain a symmetric condition,
and in that case we  say that $A,B$ intersect separably at $x^*$.

\begin{remark}
Condition (\ref{angle}) discloses itself if we introduce the
angle $\alpha = \angle(b-a^+,b^+-a^+)$ and rewrite
(\ref{angle}) in the more suggestive form

\begin{equation*}
   \makebox[\linewidth]{($\ref{angle}^\prime$) \hspace{5.4cm}$\displaystyle\frac{1-\cos\alpha}{\|a^+-b^+\|^\omega} \geq \gamma$, \hfill}
\end{equation*}
calling this the
{\em angle condition} for the building block $b\to a^+\to b^+$.  For $\omega \in (0,2)$
the interpretation of (\ref{angle}), or $(1')$,  is that if the angle $\alpha$  between 
$b-a^+$ and $b^+-a^+$ for two consecutive projection steps
$b\to a^+ \to b^+$
shrinks down to $0$ as the alternating sequence approaches $x^*$, then  $\alpha$ should not shrink too fast.
Namely, through $(1')$,  
the angle is linked to the shrinking distance between the sets. For $\omega=0$
the meaning of $(1')$ is that the angle $\alpha$ stays away from $0$.
\end{remark}

\begin{remark}
\label{incr}
Suppose $B$ intersects $A$ separably with exponent $\omega\in [0,2)$ and constant $\gamma>0$ at $x^*$. 
Let $\omega'\in (\omega,2)$ and $\gamma ' \in( 0,\gamma]$. Then $B$ intersects $A$ also $\omega'$-separably
with constant $\gamma'$. 
In consequence,
$0$-separability is the severest condition, while $\omega$-separability gets less restrictive as $\omega$ increases.

As we shall see, for
$\omega \geq 2$ property ($\ref{angle}$) can still be formulated, but turns out too weak
to be meaningful. For an  illustration see example \ref{weak}.
\end{remark}

\begin{remark}
Informally, when the  angle $\alpha = \angle(b-a^+,b^+-a^+)$ between two
consecutive projection steps  shrinks to zero, 
$A,B$ must in some sense  intersect {\em tangentially} at $x^*$.
In contrast, when $\alpha$ stays away from 0, the case of $0$-separability,
one could say that $A,B$ intersect transversally, or  {\em at an angle}. In that  case
alternating projections  are expected to behave well and converge linearly.  Tangential
intersection is the more embarrassing case, where convergence could 
be  slowed down or even fail. Our concept of
$\omega$-separability gives  new insight into the case of tangential intersection.
\end{remark}

There has been
considerable effort in the literature  to {\em avoid} tangential intersection
by making transversality assumptions. We mention
transversal intersection in \cite{malick}, the generalized non-separation property
in \cite{mord}, linearly regular intersection in \cite{luke},
or the notion of constraint qualification in 
\cite{bauschke1}. In the following we relate these notions to 
$0$-separability.

Bauschke {\em et al.}  \cite[Definition 2.1]{bauschke1} introduce an extension of the Mordukhovich
normal cone called the $B$-restricted  normal cone $N_A^B(x^*)$ to $A$ at $x^*\in A$.  They define $u\in N_A^B(x^*)$
if there exist $a_n\in A$, $a_n\to x^*$, and $u_n \to u$ such that
\[
u_n = \lambda_n \left(  b_n-a_n \right)
\]
for some $\lambda_n > 0$ and $b_n\in B$ with $a_n\in P_A(b_n)$. They then establish basic inclusions between the restricted normal cone and various classical cones \cite[Lemma 2.4]{bauschke1}. In particular for any $a\in A$ and $B$  one has
$
N_A^B(a)\subset N_A(a)
$.

Now 
let $\widetilde A$ and $\widetilde B$ be non-empty subsets of $\mathbb{R}^n$. In  \cite[Definition 6.6]{bauschke1}
the authors say that  $(A,\widetilde A,B,\widetilde B)$ satisfies the CQ-condition at $x^*\in A\cap B$ if
\begin{equation}
N_A^{\widetilde B}(x^*) \cap \left(  -N_B^{\widetilde A}(x^*) \right) \subset \{0\}.\label{CQ:condition}
\end{equation}
This condition is to be understood as a transversality hypothesis, because we have the following

\begin{proposition}
\label{prop1}
{\bf (CQ  implies  $0$-separability)}. 
Let $P_A(\partial B\setminus A) \subset \widetilde{A},~P_B(\partial A\setminus B)\subset \widetilde{B}$, and
suppose $(A,\widetilde A, B, \widetilde B)$ satisfies the CQ-condition at $x^*\in A \cap B$.
Then $A,B$ intersect $0$-separably at $x^*$.
\end{proposition}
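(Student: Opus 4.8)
The plan is to argue by contradiction, extracting from a hypothetical failure of $0$-separability a pair of limiting directions that lie in the two restricted normal cones and are negatives of one another, thereby violating the CQ-condition. Suppose $A,B$ do \emph{not} intersect $0$-separably at $x^*$. Negating Definition \ref{separable} with $\omega = 0$ and every $\gamma>0$, we obtain a sequence of building blocks $b_n\to a_n^+\to b_n^+$ with $b_n,a_n^+,b_n^+\to x^*$ and
\[
\langle b_n-a_n^+, b_n^+-a_n^+\rangle \;>\; \bigl(1-\tfrac1n\bigr)\,\|b_n-a_n^+\|\,\|b_n^+-a_n^+\|,
\]
so that the angle $\alpha_n=\angle(b_n-a_n^+, b_n^+-a_n^+)\to 0$. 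We may assume all of $b_n-a_n^+$, $b_n^+-a_n^+$ are nonzero (if infinitely many vanish, separability holds trivially along that subsequence), normalize $u_n=(b_n-a_n^+)/\|b_n-a_n^+\|$ and $v_n=(b_n^+-a_n^+)/\|b_n^+-a_n^+\|$, and pass to a subsequence so that $u_n\to u$, $v_n\to v$ with $\|u\|=\|v\|=1$; since $\langle u_n,v_n\rangle\to 1$ we get $u=v$.

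Next I would identify the cones these limits live in. Because $a_n^+\in P_A(b_n)$ with $a_n^+\in A$, $b_n\in \partial B\setminus A$ (we may assume $b_n\notin A$, else $x^*$ is reached), the vector $b_n-a_n^+$ is a proximal normal to $A$ at $a_n^+$, and by hypothesis $a_n^+\in P_A(\partial B\setminus A)\subset\widetilde A$, so $u_n=\lambda_n(b_n-a_n^+)$ with $\lambda_n>0$ and $b_n\in\widetilde B$ is exactly of the form in the definition of $N_A^{\widetilde B}$; hence $u\in N_A^{\widetilde B}(x^*)$. Symmetrically, $b_n^+\in P_B(a_n^+)$ with $a_n^+\in\partial A\setminus B$ (again assuming $a_n^+\notin B$) gives $a_n^+-b_n^+$ proximal normal to $B$ at $b_n^+$, and $b_n^+\in P_B(\partial A\setminus B)\subset\widetilde B$, so $-v_n$ is of the required form for $N_B^{\widetilde A}(x^*)$, whence $-v\in N_B^{\widetilde A}(x^*)$. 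Since $u=v$, the nonzero vector $u$ lies in $N_A^{\widetilde B}(x^*)\cap\bigl(-N_B^{\widetilde A}(x^*)\bigr)$, contradicting \eqref{CQ:condition}.

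The routine parts are the normalization and the compactness extraction; the step requiring care is the bookkeeping about \emph{which} set each projection point belongs to, so that the containments $P_A(\partial B\setminus A)\subset\widetilde A$ and $P_B(\partial A\setminus B)\subset\widetilde B$ genuinely apply. The main obstacle I anticipate is handling the degenerate configurations where $b_n\in A$ or $a_n^+\in B$ along a subsequence: in those cases one must check that the building block already satisfies \eqref{angle} with $\omega=0$ (for instance $b_n\in A$ forces $a_n^+=b_n$ so the inner product is $0$), or else restrict to the complementary subsequence, and one should also confirm that $b_n\in\partial B$ (not merely $b_n\in B$), which holds because a projection of a point outside $B$ lands on $\partial B$ — more precisely, $b_n\in P_B(a_n)$ with $a_n\notin B$ gives $b_n\in\partial B$, and the case $a_n\in B$ again closes the argument. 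Once these edge cases are dispatched, the contradiction with the CQ-condition is immediate.
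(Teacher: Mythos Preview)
Your contradiction-plus-compactness argument is a valid alternative to the paper's proof, which instead invokes the quantitative CQ-number machinery from \cite{bauschke1}: there one fixes $\delta>0$ so that $\theta_\delta(A,\widetilde A,B,\widetilde B)=:1-\gamma<1$ (this is where \cite[Theorem~6.8]{bauschke1} is used), and then for any building block $b\to a^+\to b^+$ in $\ball(x^*,\delta)$ observes directly that $\cos\alpha=\langle u,v\rangle\le\theta_\delta=1-\gamma$, with $u=(b-a^+)/\|b-a^+\|\in\widehat N_A^{\widetilde B}(a^+)$ and $v=(b^+-a^+)/\|b^+-a^+\|\in-\widehat N_B^{\widetilde A}(b^+)$. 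Your route is more self-contained, since it avoids citing the CQ-number result; the paper's is shorter precisely because it outsources the compactness step to \cite{bauschke1}.

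One bookkeeping slip to correct: in the paragraph where you place $u$ in $N_A^{\widetilde B}(x^*)$, the containment you invoke, $a_n^+\in P_A(\partial B\setminus A)\subset\widetilde A$, is not the relevant one there. Membership of $u_n$ in the pre-limiting set for $N_A^{\widetilde B}$ requires the \emph{projecting point} $b_n$ to lie in $\widetilde B$, not the projected point $a_n^+$ to lie in $\widetilde A$. The correct justification is that $b_n\in P_B(a_n)$ with $a_n\in\partial A\setminus B$ (an edge case you mention at the end), so $b_n\in P_B(\partial A\setminus B)\subset\widetilde B$. Conversely, the fact $a_n^+\in\widetilde A$ is exactly what you need for the \emph{other} cone, to obtain $-v\in N_B^{\widetilde A}(x^*)$. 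Once these two justifications are swapped to their correct places, the argument is complete.
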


\begin{proof}
According to \cite[Definition 2.1]{bauschke1} the $\widetilde{B}$-restricted proximal normal cone $\widehat{N}_A^{\tilde B}(a)$ of $A$
at $a\in A$ is the set of vectors $u$ of the form $u =  \lambda(\tilde{b}-a)$ for some $\lambda > 0$ and some
$\widetilde{b}\in \widetilde{B}$ satisfying $a\in P_A(\widetilde{b})$. The cone $\widehat{N}_B^{\tilde{A}}(b)$ at $b\in B$ is defined
analogously.
Then by \cite[Definition 6.1]{bauschke1}, specialized to the case of two sets, 
the CQ-number at $x^*$ associated with $(A,\widetilde A,B,\widetilde B)$ is
\[
\theta_\delta(A,\widetilde A,B,\widetilde B)
=
\sup\left\{ \langle u,v\rangle: u\in \widehat{N}_A^{\widetilde{B}}(a), -v\in \widehat{N}_B^{\widetilde{A}}(b),
\|u\|\leq 1, \|v\|\leq 1, a,b \in \ball(x^*,\delta)\right\}
\]
and the limiting CQ-number is
\[
\theta(A,\widetilde A,B,\widetilde B)=\lim_{\delta \to 0^+} \theta_\delta(A,\widetilde A,B,\widetilde B).
\]
The authors show in \cite[Theorem 6.8]{bauschke1} that for two sets  the CQ-condition 
$N_A^{\widetilde B}(x^*)\cap (-N_B^{\widetilde A}(x^*)) \subset\{0\}$,
implies $\theta(A,\widetilde A,B,\widetilde B) < 1$ . 

Using this, pick $\delta > 0$ such that $\theta_\delta(A,\widetilde A,B,\widetilde B) =: 1-\gamma  < 1$. 
Consider a building block $b\to a^+\to b^+$ as in definition \ref{separable} with $b,a^+,b^+\in U:=\ball(x^*,\delta)$. Then we have $b\in \widetilde B$ 
and $a^+\in \widetilde A$. Hence
$b-a^+\in \widehat{N}_A^{\widetilde B}(a^+)$ and $a^+-b^+\in \widehat{N}_B^{\widetilde A}(b^+)$, and also
$$u=(b-a^+)/\|b-a^+\|\in \widehat{N}_A^{\widetilde B}(a^+)\mbox{ and }v=(b^+-a^+)/\|b^+-a^+\| \in -\widehat{N}_B^{\widetilde A}(b^+).$$
Therefore, if
$\alpha = \angle(b-a^+,b^+-a^+)$, then $\cos\alpha=\langle u,v\rangle  \leq \theta_\delta(A,\widetilde  A,B,\widetilde B)=1- \gamma$ by the definition of $\theta_\delta$, 
because $b,a^+,b^+\in \ball(x^*,\delta)$ and $\|u\|=\|v\|=1$.
That shows $1-\cos\alpha \ge \gamma > 0$
and proves  that $B$ intersects $A$ $0$-separably at $x^*$ with constant $\gamma$. The estimate for building blocks $a\to b\to a^+$
is analogous.
\hfill $\square$
\end{proof}

Example \ref{example3} shows  that the converse of proposition
\ref{prop1} is not
true. In fact, 
$0$-separability
seems more versatile
in applications, while still guaranteeing linear convergence.  We  conclude by 
noting that linearly regular intersection in the sense of
\cite{luke}; and transversality in the sense of \cite{malick},
imply $0$-separability.  

Following \cite[section 2, (2.2)]{luke},  $A$ and $B$ have linearly regular intersection at $x^*\in A\cap B$ if
\begin{eqnarray}
\label{lin-reg}
N_A(x^*)\cap \left(-N_B(x^*)\right) =\{0\}.
\end{eqnarray}
This property is called {\it strong regularity} in \cite{kruger} and {\it the basic qualification condition for sets} in \cite[Definition 3.2 (i)]{mord}.
As a consequence of  $N_A^{\widetilde{B}}(a)\subset N_A(a)$, $N_B^{\widetilde A}(b)\subset N_B(b)$, linearly regular intersection 
implies that $(A,\widetilde A, B, \widetilde B$) satisfies the CQ-condition at $x^*$ for any nonempty  $\widetilde A$ and $\widetilde B$ in $\R^n$; cf. \cite{bauschke2}. 
By  Proposition \ref{prop1} we therefore have:
\begin{corollary}
\label{cor1}
{\bf (Linear regularity implies $0$-separability)}. Suppose $A,B$ intersect linearly
regularly at $x^*\in A\cap B$. 
Then they intersect $0$-separably at $x^*$.
\hfill $\square$
\end{corollary}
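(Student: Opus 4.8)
The plan is to reduce Corollary \ref{cor1} to Proposition \ref{prop1} by verifying that linearly regular intersection supplies, for suitable auxiliary sets $\widetilde A$ and $\widetilde B$, both the hypotheses $P_A(\partial B\setminus A)\subset \widetilde A$, $P_B(\partial A\setminus B)\subset \widetilde B$ and the CQ-condition \eqref{CQ:condition}. Since the CQ-hypotheses on $\widetilde A,\widetilde B$ are mere inclusions that can always be met by enlarging the auxiliary sets, the natural choice is to take $\widetilde A=\widetilde B=\R^n$ (or any nonempty sets containing the relevant projections); then the first hypothesis of Proposition \ref{prop1} is automatic, and everything comes down to checking the CQ-condition at $x^*$.

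First I would recall the cone inclusions already stated in the excerpt: for any nonempty $\widetilde A,\widetilde B\subset\R^n$ one has $N_A^{\widetilde B}(a)\subset N_A(a)$ and $N_B^{\widetilde A}(b)\subset N_B(b)$, these being instances of \cite[Lemma 2.4]{bauschke1}. Next I would use the hypothesis of linearly regular intersection, namely $N_A(x^*)\cap(-N_B(x^*))=\{0\}$ from \eqref{lin-reg}. Combining the two, any $u\in N_A^{\widetilde B}(x^*)\cap(-N_B^{\widetilde A}(x^*))$ lies in $N_A(x^*)\cap(-N_B(x^*))$, hence $u=0$; this is exactly the CQ-condition \eqref{CQ:condition} for the quadruple $(A,\widetilde A,B,\widetilde B)$. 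This monotonicity argument is essentially the observation attributed to \cite{bauschke2} in the paragraph preceding the corollary, so the whole step is a one-line consequence of the inclusion of restricted normal cones in Mordukhovich normal cones.

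Having established the CQ-condition, I would then invoke Proposition \ref{prop1} directly: with $\widetilde A=\widetilde B=\R^n$ the inclusions $P_A(\partial B\setminus A)\subset\widetilde A$ and $P_B(\partial A\setminus B)\subset\widetilde B$ hold trivially, and the CQ-condition has just been verified, so the proposition yields that $A,B$ intersect $0$-separably at $x^*$. This completes the argument.

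There is essentially no serious obstacle here; the corollary is a packaging statement. The only point requiring minimal care is to make sure the auxiliary sets chosen are nonempty (so that the cited inclusions $N_A^{\widetilde B}\subset N_A$ etc. apply) and that they satisfy the projection-inclusion hypotheses of Proposition \ref{prop1}; taking $\widetilde A=\widetilde B=\R^n$ settles both at once. One could alternatively keep $\widetilde A,\widetilde B$ abstract and merely note that the monotonicity of the restricted normal cones makes the conclusion independent of which nonempty $\widetilde A,\widetilde B$ are used, which is in fact the phrasing already prepared in the text immediately above the corollary.
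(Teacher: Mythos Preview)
Your argument is correct and follows exactly the route the paper takes: the text immediately preceding the corollary already observes that the inclusions $N_A^{\widetilde B}(x^*)\subset N_A(x^*)$ and $N_B^{\widetilde A}(x^*)\subset N_B(x^*)$ force the CQ-condition from linear regularity for any nonempty $\widetilde A,\widetilde B$, and then Proposition~\ref{prop1} applies. Your choice $\widetilde A=\widetilde B=\mathbb R^n$ to make the projection-inclusion hypotheses trivial is a clean way to package this.
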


As we mentioned before, in the context of
alternating projections linear regularity and the CQ-condition are 
to be understood as transversality type hypotheses, indicating that the sets
$A,B$ intersect {\em at an angle at} $x^*$, as opposed to intersecting tangentially. This is confirmed by
relating $0$-separability to the classical notion of transversality. Following
\cite[def. 3]{malick}, two $C^2$-manifolds $A,B$ in $\mathbb R^n$ intersect transversally
at $x^*\in A \cap B$ if
\begin{eqnarray}
\label{trans}
T_A(x^*) + T_B(x^*) = \mathbb R^n,
\end{eqnarray}
where $T_M(x^*)$ is the tangent space to $M$ at $x^*\in M$. We then have the following

\begin{corollary}
Let $A,B$ be $C^2$-manifolds which intersect transversally at $x^*$.
Then $A$ and $B$ intersect $0$-separably at $x^*$. \hfill $\square$
\end{corollary}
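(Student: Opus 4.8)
The plan is to reduce the statement to Corollary \ref{cor1} by showing that transversal intersection of $C^2$-manifolds is nothing but a special instance of linearly regular intersection. The key fact I would invoke is that at a point $x^*$ of a $C^2$-manifold $M$ all the normal cones coincide with the normal space: $N_M^p(x^*)=\widehat N_M(x^*)=N_M(x^*)=T_M(x^*)^\perp$. This is classical (see e.g. \cite{rock}); geometrically, near a smooth point the manifold looks, to first order, like an affine subspace, so the proximal normals fill out exactly the orthogonal complement of the tangent space, and passing to limits in the definition of $N_M$ does not enlarge this set. I would state this either as a short lemma or simply cite it.

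Granting this, the remainder is an elementary linear-algebra computation. Since $T_A(x^*)$ and $T_B(x^*)$ are linear subspaces, taking orthogonal complements turns sums into intersections: $\bigl(T_A(x^*)+T_B(x^*)\bigr)^\perp = T_A(x^*)^\perp \cap T_B(x^*)^\perp$. The transversality hypothesis (\ref{trans}) reads $T_A(x^*)+T_B(x^*)=\mathbb R^n$, whence $T_A(x^*)^\perp\cap T_B(x^*)^\perp=\{0\}$. Moreover $N_B(x^*)=T_B(x^*)^\perp$ is a linear subspace, so $-N_B(x^*)=N_B(x^*)$, and therefore
\[
N_A(x^*)\cap\bigl(-N_B(x^*)\bigr)=T_A(x^*)^\perp\cap T_B(x^*)^\perp=\{0\},
\]
which is precisely the linear regularity condition (\ref{lin-reg}) for $A,B$ at $x^*$.

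With this in hand, Corollary \ref{cor1} applies directly and yields that $A$ and $B$ intersect $0$-separably at $x^*$, completing the proof. The only point that calls for any care is the identification of the Mordukhovich normal cone of a $C^2$-manifold with its normal space, and this is where the $C^2$ hypothesis is genuinely used; otherwise the argument is purely formal. I would not expect any real obstacle here.
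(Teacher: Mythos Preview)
Your proposal is correct and follows essentially the same route as the paper: both reduce to Corollary~\ref{cor1} by showing that classical transversality of $C^2$-manifolds implies linearly regular intersection~(\ref{lin-reg}). The only difference is that the paper simply cites \cite[Theorem~18]{malick} for this implication, whereas you spell it out via the identification $N_M(x^*)=T_M(x^*)^\perp$ and the orthogonal-complement computation; your version is self-contained but otherwise identical in structure.
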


\begin{proof}
Indeed, as shown in {\cite[Theorem 18]{malick}}, classical transversality (\ref{trans})
implies linear regular intersection (\ref{lin-reg}),  hence we can apply Corollary \ref{cor1}.
\hfill $\square$
\end{proof}

After the initial version \cite{initial} of this work was published, a related concept
termed intrinsic transversality was proposed in \cite{obsolete}.
Following \cite[Def. 2.2]{obsolete}, $A,B$ are intrinsically
transversal at $x^*\in A \cap B$ with  constant $\kappa \in (0,1]$ 
if there exists a neighborhood $U$
of $x^*$ such that for every $a^+\in A \cap U \setminus B$ and every
$b^+\in B \cap U \setminus A$ the estimate
\begin{eqnarray}
\label{intrinsic}
\max \left\{ d\left(\frac{a^+-b^+}{\|a^+-b^+\|},N_B^p(b^+)   \right), d\left( \frac{a^+-b^+}{\|a^+-b^+\|},-N_A^p(a^+)  \right) \right\} \geq \kappa >0
\end{eqnarray}
is satisfied. This relates to $0$-separability as follows.

\begin{proposition}
\label{hoax}
{\bf (Intrinsic transversality implies $0$-separability)}.
Suppose $A,B$ are intrinsically transversal at $x^*\in A \cap B$ with transversality constant $\kappa\in (0,1]$.
Then they intersect $0$-separably at $x^*$ with constant $\gamma=\kappa^2/2$.
\end{proposition}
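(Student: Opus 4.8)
The strategy is to extract the two proximal normals produced by the projections inside a building block and substitute them into the intrinsic transversality inequality (\ref{intrinsic}). Let $U$ be the neighbourhood furnished by intrinsic transversality and fix a building block $b\to a^+\to b^+$ contained in $U$. If $b^+=a^+$ (which includes the case $b=a^+$), then both sides of (\ref{angle}) are zero and nothing is to prove; if $b^+\in A$, hence $b^+\in A\cap B$, then $a^+$ being a nearest point of $A$ to $b$ gives $\langle b-a^+,b^+-a^+\rangle\le\tfrac12\|b^+-a^+\|^2$, and combining this with $\|b^+-a^+\|=d_B(a^+)\le\|b-a^+\|$ (recall $b\in B$) yields $\langle b-a^+,b^+-a^+\rangle\le(1-\tfrac{\kappa^2}{2})\|b-a^+\|\,\|b^+-a^+\|$ since $\kappa\le 1$. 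Hence we may assume $b\ne a^+$, $b^+\ne a^+$ and $b^+\notin A$; then also $a^+\notin B$, so that $a^+\in A\cap U\setminus B$ and $b^+\in B\cap U\setminus A$ qualify in (\ref{intrinsic}).

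Now $a^+\in P_A(b)$ gives $b-a^+\in N_A^p(a^+)$ and $b^+\in P_B(a^+)$ gives $a^+-b^+\in N_B^p(b^+)$. Since the proximal normal cones are cones, the unit vector $u:=(a^+-b^+)/\|a^+-b^+\|$ lies in $N_B^p(b^+)$, so $d(u,N_B^p(b^+))=0$. Applying (\ref{intrinsic}) at the pair $(a^+,b^+)$ therefore forces $d(u,-N_A^p(a^+))\ge\kappa$, and bounding this distance above by the competitor $-(b-a^+)/\|b-a^+\|\in -N_A^p(a^+)$ gives
\[
\kappa\ \le\ \left\|\frac{a^+-b^+}{\|a^+-b^+\|}+\frac{b-a^+}{\|b-a^+\|}\right\|.
\]

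Squaring and using that the two summands are unit vectors with inner product $-\cos\alpha$, where $\alpha=\angle(b-a^+,b^+-a^+)$, the right-hand side squared equals $2-2\cos\alpha$; hence $\kappa^2\le 2-2\cos\alpha$, i.e. $\cos\alpha\le 1-\tfrac{\kappa^2}{2}$. Multiplying through by $\|b-a^+\|\,\|b^+-a^+\|$ is precisely (\ref{angle}) with $\omega=0$ and $\gamma=\kappa^2/2$. The estimate for building blocks $a\to b\to a^+$ is obtained symmetrically, now using $b-a^+\in N_A^p(a^+)$ to make the second term of (\ref{intrinsic}) vanish at the pair $(a^+,b)$ and $(a-b)/\|a-b\|\in N_B^p(b)$ as the competitor for the first term, which leads to $\cos\angle(a-b,a^+-b)\le 1-\kappa^2/2$. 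Thus $A,B$ intersect $0$-separably at $x^*$ with constant $\gamma=\kappa^2/2$. I expect the only delicate points to be the bookkeeping of which projection step contributes which cone --- a normal to $A$ at $a^+$ versus a normal to $B$ at $b^+$ --- and the routine but necessary disposal of the degenerate configurations, without which the set-difference hypotheses of (\ref{intrinsic}) would not be legitimately in force.
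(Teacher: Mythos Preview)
Your proof is correct and follows essentially the same route as the paper: both arguments note that $a^+-b^+\in N_B^p(b^+)$ kills the first term in (\ref{intrinsic}), then use $b-a^+\in N_A^p(a^+)$ as a competitor in the surviving distance to obtain $\kappa^2\le 2-2\cos\alpha$. Your version is in fact more careful than the paper's, which tacitly assumes $b^+\notin A$ when invoking (\ref{intrinsic}); your separate treatment of the degenerate case $b^+\in A$ via the projection inequality $\langle b-a^+,b^+-a^+\rangle\le\tfrac12\|b^+-a^+\|^2$ fills that small gap, and your explicit symmetric argument for blocks $a\to b\to a^+$ makes the two-sided conclusion fully justified.
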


\begin{proof}
By assumption there exists a neighborhood $U$ of $x^*$ on which
(\ref{intrinsic}) is satisfied.
Now let $b^+\in P_B(a^+)\cap U$, $a^+\not= b^+$, $a^+\in A \cap U$. Then
since $a^+-b^+\in N^p_B(b^+)$, we have $d\left( \frac{a^+-b^+}{\|a^+-b^+\|}, N^p_B(b^+)  \right)=0$,
so by (\ref{intrinsic}) we must have $d\left( \frac{a^+-b^+}{\|a^+-b^+\|}, -N^p_A (a^+)\right)\geq \kappa$.
Since $b-a^+\in N^p_A(a^+)$, we obtain
\[
2-2 \frac{\langle b^+-a^+,b-a^+\rangle}{\|a^+-b^+\| \|b-a^+\|} \geq \kappa^2,
\]
and this readily gives
\[
\langle b^+-a^+,b-a^+\rangle \leq \left( 1-\frac{\kappa^2}{2} \right) \|a^+-b^+\| \|b-a^+\|,
\]
which is (\ref{separable}) for the case $\omega = 0$ and $\gamma=\kappa^2/2$,  as claimed. 
\hfill $\square$
\end{proof}

We will resume the discussion of separable intersection of sets
in section \ref{loja}.

\section{H\"older regularity}
\label{hoelder}

In this section we introduce the concept of H\"older  regularity. 
We then relate it to other regularity notions like Clarke regularity,
prox-regularity, superregularity in the  sense of \cite{luke}, and its extension in \cite{bauschke1}.

\begin{definition}
{\bf (H\"older regularity)}.
Let $\sigma\in [0,1)$. The set $B$ is $\sigma$-H\"older regular with respect to $A$ at $b^*\in A \cap B$
if there exists a neighborhood $U$ of $b^*$ and a constant $c>0$ such that for every $a^+\in A\cap U$, 
and every $b^+\in P_B(a^+) \cap U$  one has
\begin{equation}
\label{empty}
\ball(a^+,(1+c)r) \cap \{b\in P_A(a^+)^{-1}: 
\langle a^+-b^+,b-b^+\rangle > \sqrt{c} r^{\sigma+1}\|b-b^+\|\} \cap B = \emptyset,
\end{equation} 
where $r = \|a^+-b^+\|$. We say that $B$ is H\"older regular with respect to $A$ if it is $\sigma$-H\"older
regular with respect to $A$ for every $\sigma\in [0,1)$.
\hfill $\square$
\end{definition}

\begin{remark}
Using the angle $\beta=\angle (a^+-b^+,b-b^+)$ and $r=\|a^+-b^+\|$,  condition (\ref{empty})
can be re-written in the following more suggestive form 
\begin{equation*}
   \makebox[\linewidth]{($\ref{empty}^\prime$) \hspace{0.4cm}$\displaystyle \ball(a^+,(1+c)r) \cap \{b\in P_A(a^+)^{-1} :
\cos \beta > \sqrt{c} r^\sigma\} \cap B = \emptyset.$ \hfill}
\end{equation*}
Geometrically, this means that the right circular cone with axis $a^+-b^+$ and aperture $\beta=\arccos \sqrt{c}r^\sigma$ 
truncated by the ball $\ball(a^+,(1+c)r)$  and the $B$-restricted proximal normal cone $\widehat{N}_A^B(a^+)$  contains
no points of $B$ other than $b^+$.
\end{remark}

In the  remainder of this section 
we  relate H\"older regularity to older geometric and analytic regularity concepts. 
We first consider  notions related to $0$-H\"older regularity. The case of $\sigma$-H\"older regularity
with $\sigma > 0$ will be considered later.

\begin{definition}
{\bf (Superregularity \cite[Proposition 4.4]{luke})}.
A closed set $B$ in $\mathbb R^n$ is called superregular at $b^*\in B$ if for every $\epsilon > 0$ there exists $\delta > 0$
such that for all $b,b^+\in \ball(b^*,\delta)\cap  B$ and $u\in N_B^p(b^+)$, the estimate
$\langle u,b-b^+\rangle \leq \epsilon \|u\| \|b-b^+\|$ is satisfied. 
\end{definition}

\begin{definition}
\label{def_bau}
{\bf ($(A,\epsilon,\delta)$-regularity \cite[Definition 8.1, (i)]{bauschke1})}.
Let $A,B$ be closed sets in $\mathbb R^n$.
$B$ is called $(A,\epsilon,\delta)$-regular at $b^*\in A \cap B$ if for all $b,b^+\in \ball(b^*,\delta)\cap B$
and every $u\in \widehat{N}_B^A(b^+)$, the estimate
$\langle u,b-b^+\rangle \leq \epsilon \|u\| \|b-b^+\|$ is satisfied.
\end{definition}

The two concepts are linked as follows:
$B$ is superregular at $b^*\in B$ if and only if for every $\epsilon > 0$ there exists $\delta>0$
such that $B$ is $(\mathbb R^n,\epsilon,\delta)$-regular at $b^*$ in the sense of Definition \ref{def_bau}, see \cite[Definition 8.1]{bauschke1}. 

\begin{proposition}
\label{regular}
{\bf ($0$-H\"older regularity from superregularity)}.
Suppose $B$ is $(A,\epsilon,\delta)$-regular at $b^*\in A \cap B$. Then $B$ is $0$-H\"older regular at $b^*$
with respect to $A$ with constant $c=\epsilon^2$.  In particular, if $B$ is superregular at $b^*$,
then $B$ is $0$-H\"older regular with respect to $A$ with constant $c$ that 
may be chosen arbitrarily small.
\end{proposition}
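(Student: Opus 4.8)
The plan is to unwind both definitions and observe that the single vector $u=a^+-b^+$ is exactly the kind of $A$-restricted proximal normal vector to which the $(A,\epsilon,\delta)$-regularity estimate applies; the emptiness condition (\ref{empty}) with $\sigma=0$ then drops out of that estimate, with $\epsilon$ playing the role of $\sqrt c$. First I would dispose of a trivial case: if $\epsilon\ge 1$ then $c=\epsilon^2\ge 1$, and since Cauchy--Schwarz gives $\langle a^+-b^+,b-b^+\rangle\le\|a^+-b^+\|\,\|b-b^+\|=r\,\|b-b^+\|$ with $r=\|a^+-b^+\|$, the set carved out inside (\ref{empty}) is already empty and there is nothing to prove. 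Hence I may assume $\epsilon<1$, so $c<1$, which also keeps the factor $(1+c)$ in the radius under control.

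Next I would fix the neighborhood. Starting from the $\delta$ furnished by $(A,\epsilon,\delta)$-regularity, I take $U=\ball(b^*,\delta')$ with $\delta'$ small enough that a triangle-inequality bound forces $b,b^+\in\ball(b^*,\delta)$ whenever $a^+\in A\cap U$, $b^+\in P_B(a^+)\cap U$ and $b\in\ball(a^+,(1+c)r)$; indeed $r\le 2\delta'$ and $\|b-b^*\|\le(1+c)r+\delta'\le(3+2c)\delta'$, so $\delta'=\delta/(3+2c)$ works. Fixing such $a^+,b^+$, discarding the trivial case $r=0$, and supposing toward a contradiction that some $b\in B$ lies in the triple intersection in (\ref{empty}), the middle condition forces $b\ne b^+$, hence $\|b-b^+\|>0$. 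The key step is that, since $b^+\in P_B(a^+)$ with $a^+\in A$, the vector $u:=a^+-b^+$ lies in the $A$-restricted proximal normal cone $\widehat N_B^A(b^+)$ (take $\lambda=1$, $\tilde a=a^+$ in the notation of Definition \ref{def_bau}). Applying $(A,\epsilon,\delta)$-regularity to $b,b^+\in\ball(b^*,\delta)\cap B$ and this $u$ yields
\[
\langle a^+-b^+,\,b-b^+\rangle\ \le\ \epsilon\,\|a^+-b^+\|\,\|b-b^+\|\ =\ \sqrt c\,\,r\,\|b-b^+\|,
\]
which contradicts the defining inequality $\langle a^+-b^+,b-b^+\rangle>\sqrt c\,r^{\,0+1}\|b-b^+\|$ of the set in (\ref{empty}). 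Note that the extra restriction $b\in P_A(a^+)^{-1}$ plays no role here, so we have in fact shown emptiness of a larger set. Thus the intersection is empty and $B$ is $0$-H\"older regular with respect to $A$ at $b^*$ with constant $c=\epsilon^2$.

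For the ``in particular'' clause I would invoke the equivalence recalled just before the proposition: superregularity of $B$ at $b^*$ gives, for every $\epsilon>0$, some $\delta>0$ with $B$ being $(\mathbb{R}^n,\epsilon,\delta)$-regular at $b^*$; and since $A\subseteq\mathbb{R}^n$ implies $\widehat N_B^A(b^+)\subseteq\widehat N_B^{\mathbb{R}^n}(b^+)$, the estimate persists for $A$-restricted normals, so $B$ is $(A,\epsilon,\delta)$-regular at $b^*$ and the first part yields $0$-H\"older regularity with respect to $A$ with constant $c=\epsilon^2$, arbitrarily small since $\epsilon$ is.

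I do not expect a genuine obstacle. The only mildly delicate point is the neighborhood bookkeeping — making sure the points $b$ and $b^+$ produced by the H\"older-regularity configuration actually land inside $\ball(b^*,\delta)$, where the $(A,\epsilon,\delta)$-regularity estimate is available — which is handled by the triangle-inequality shrinking of $U$ above; the reduction to $\epsilon<1$ is a secondary bookkeeping matter.
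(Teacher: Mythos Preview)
Your argument is correct and follows essentially the same route as the paper: shrink the neighborhood so that any candidate $b,b^+$ land in $\ball(b^*,\delta)$, recognize $u=a^+-b^+\in\widehat N_B^A(b^+)$, and apply the $(A,\epsilon,\delta)$-estimate with $\epsilon=\sqrt c$. The only differences are cosmetic: the paper uses $U=\ball(b^*,\delta/4(1+c))$ rather than your $\delta/(3+2c)$, and it does not bother separating out the case $\epsilon\ge1$ (your Cauchy--Schwarz shortcut there is fine but unnecessary, since the main argument already covers it).
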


\begin{proof}
Since superregularity of $B$ at $b^*$  implies that for every $\epsilon>0$ there exists
$\delta >0$ such that $B$ is $(A,\epsilon,\delta)$-regular at $b^*$, \cite{bauschke1},
it remains to prove the first part of the statement. 

In order to check $0$-H\"older regularity, we have to provide a neighborhood $U$ of $b^*$ and $c>0$ such that
(\ref{empty}) is satisfied with $\sigma=0$.
We choose $U = \ball(b^*,\frac{\delta}{4(1+\epsilon^2)})$ and put $c=\epsilon^2$. 
To check (\ref{empty})  pick $a^+,b^+\in U$ such that 
$b^+\in P_B(a^+)$, $a^+\in A$. That gives $r=\|b^+-a^+\|\leq \frac{\delta}{2(1+c)}$.
By the definition of the restricted normal cone we have
$u:=a^+-b^+\in \widehat{N}_B^A(b^+)$. Now let $b\in B$, $b\not= b^+$. We have to show that
$b$ is not an element of the set in (\ref{empty}) for $\sigma=0$.
Suppose $b\in \ball(a^+,(1+c )r)$. 
Then we have to show $\langle a^+-b^+,b-b^+\rangle \leq \sqrt{c}r\|b-b^+\|$. Observe that
$\|b-b^*\| \leq \|b-a^+\|+\|a^+-b^*\| \leq (1+c )r + \frac{\delta}{{4}(1+c)}\leq (1+c ) \frac{\delta}{2(1+c)}+   \frac{\delta}{4(1+c)} < \delta$.
Hence
$(A,\epsilon,\delta)$-regularity  implies $\langle u,b-b^+\rangle \leq \epsilon \|u\| \|b-b^+\| = \sqrt{c}\|u\| \|b-b^+\|$, and the claim follows.
\hfill $\square$
\end{proof}

\begin{remark}
Example \ref{packman} shows  that the converse of proposition \ref{regular} is not true. 
The difference between superregularity and its extension
$(A,\epsilon,\delta)$-regularity on the one hand, and $0$-H\"older regularity on the other,
 is the following:  in (\ref{empty})  we exclude
points in the intersection
of a restricted right circular cone with vertex $b^+$, axis $a^+-b^+$, and aperture $\beta  = \arccos \sqrt{c}r^\sigma$ and the shrinking ball
$\ball(a^+,(1+c)r)$. In contrast,  $(A,\epsilon,\delta)$-regularity forbids many more points,
namely all points in that same 
cone, but within the fixed ball $\ball(b^*,\delta)$.  In consequence, this type
of regularity is  not suited to deal with singularities pointing inwards, 
like the prototype in example \ref{packman}. 
\hfill $\square$
\end{remark}

\begin{remark}
If $B$ is $\sigma$-H\"older regular at $b^*$ with respect to $A$
with constant $c>0$ on the neighborhood $U$ of $b^*$, and if $\sigma' < \sigma$, then
for every $c'\in (0,c)$  there exists
a neighborhood $V \subset U$ of $b^*$ such that $B$ is $\sigma'$-H\"older regular at
$b^*$ with constant $c'$.
Indeed, if $b\in \ball(a^+,(1+c')r)$ in (\ref{empty}), then also $b\in \ball(a^+,(1+c)r)$, hence
by assumption
$\cos\beta \leq \sqrt{c}r^\sigma = \sqrt{c} r^{\sigma-\sigma'}r^{\sigma'} \leq \sqrt{c'}r^{\sigma'}$
if $V$ is chosen so that $\sqrt{c}r^{\sigma-\sigma'} < \sqrt{c'}$.
\hfill $\square$
\end{remark}

We next justify our notion of H\"older regularity by 
proving that prox-regular sets are $\sigma$-H\"older regular for every $\sigma\in (0,1]$. Recall  that
a set $B$ in $\mathbb R^n$ is {\em prox-regular} at $b^*\in B$ if there exists
a neighborhood $U$ of $b^*$ such that $P_B(y)$ is single-valued for every $y\in U$, cf. \cite[Chapter 13]{rock}.
 
Consider $b\in B$ and let
$d\in N_B^p(b)$ be  a unit proximal normal to $B$ at $b$.  Define the reach of
$B$ at $b$ along $d$ as
\begin{eqnarray}
\label{RR}
R(b,d) = \sup\{R \geq 0: b = P_B(b+td) \mbox{ for every } 0 \leq t \leq R\}.
\end{eqnarray}
Then $R(b,d) \in (0,\infty]$, and
the case $R(b,d)=\infty$ occurs e.g. if $B$ is convex and $b$ a boundary point of $B$.
We can say that $\ball(b+R(b,d)d,R(b,d))$ is the largest ball with its centre on
$b+\mathbb R_+d$ which touches $B$ in $b$ from outside, i.e., has no points from $B$ in its interior. 

It was shown in \cite[Thm. 1.3 $(h)$]{thibault} that $B$ is prox-regular at $b^*\in B$
if and only if there exists $r>0$ and a neighborhood $U$ of $b^*$ such that $R(b,d)\geq r$
for every $b\in U \cap B$ and every $d\in N_B^p(b)$ with $\|d\|=1$. 
An immediate consequence is that sets of positive reach
in the sense of Federer \cite{federer} are prox-regular;  see e.g. \cite[Theorem 1.3]{thibault}. 
Therefore,  prox-regularity 
is a local version 
of positive, or non-vanishing,  reach.

We now relax the concept of non-vanishing reach to sets where the reach may vanish at some boundary points,
but slowly so.

\begin{definition}
Let $\sigma\in (0,1]$.
The set $B$ has $\sigma$-slowly vanishing reach with respect to the set $A$ at $b^*\in A\cap B$
if there exists $0 \leq \tau < 1$ such that
\begin{eqnarray}
\label{slow}
\limsup_{A \ni a\to b^*, b\in P_B(a)} \frac{\|a-b\|^\sigma}{R(b,d)} \leq \tau,
\end{eqnarray}
where 
$d=(a-b)/\|a-b\|$. 
We say that the reach vanishes with exponent $\sigma$ and
rate $\tau$.
\hfill $\square$
\end{definition}

\begin{proposition}
\label{prox}
If $B$ is prox-regular at $b^*\in A\cap B$, then it has slowly vanishing reach at $b^*$ with respect to  $A$
with rate $\tau=0$ and arbitrary exponent $\sigma\in (0,1]$.
\end{proposition}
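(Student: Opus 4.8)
The plan is to read this off directly from the reach characterization of prox-regularity recalled just before the statement. By \cite[Thm.~1.3 $(h)$]{thibault}, prox-regularity of $B$ at $b^*$ provides a radius $r>0$ and a neighborhood $U$ of $b^*$ such that $R(b,d)\geq r$ for every $b\in U\cap B$ and every unit proximal normal $d\in N_B^p(b)$. So, to verify (\ref{slow}) with $\tau=0$, it suffices to show that along any sequence $A\ni a\to b^*$ with $b\in P_B(a)$ (and $a\notin B$, so that $d=(a-b)/\|a-b\|$ is defined) the pair $(b,d)$ eventually lies in this good regime while the numerator $\|a-b\|^\sigma$ tends to $0$.

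The steps I would carry out are as follows. First, observe that $b\to b^*$: since $b^*\in B$ we have $\|a-b\|=d_B(a)\leq\|a-b^*\|$, hence $\|b-b^*\|\leq\|b-a\|+\|a-b^*\|\leq 2\|a-b^*\|\to 0$; thus $b\in U\cap B$ once $a$ is close enough to $b^*$. Second, note that $d=(a-b)/\|a-b\|$ is a unit proximal normal to $B$ at $b$, because $b=P_B\big(b+\|a-b\|\,d\big)$ and $\|d\|=1$, so $d\in N_B^p(b)$. Hence, for $a$ sufficiently near $b^*$, the uniform bound yields $R(b,d)\geq r>0$ (this also covers the harmless case $R(b,d)=\infty$). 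Third, since $\|a-b\|=d_B(a)\to 0$, for every $\sigma\in(0,1]$ we get
\[
\frac{\|a-b\|^\sigma}{R(b,d)}\ \leq\ \frac{\|a-b\|^\sigma}{r}\ \longrightarrow\ 0\quad\text{as } a\to b^* ,
\]
so the $\limsup$ in (\ref{slow}) equals $0$. This is precisely $\sigma$-slowly vanishing reach of $B$ at $b^*$ with respect to $A$ with rate $\tau=0$, for any $\sigma\in(0,1]$.

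I do not expect a genuine obstacle here; the proposition is a soft corollary of Thibault's uniform reach estimate. The only points deserving a line of care are the two elementary verifications — that $b\to b^*$, so that one actually enters the neighborhood $U$ on which the reach is bounded below, and that the normalized direction $d$ is genuinely a \emph{unit proximal normal} at $b$, so that the lower bound $R(b,d)\geq r$ applies — together with the implicit convention that $a$ is taken outside $B$ (otherwise $b=a$, the direction $d$ is undefined, and the contribution is vacuously $0$).
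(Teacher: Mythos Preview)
Your proposal is correct and follows essentially the same approach as the paper: invoke the uniform reach bound from \cite[Thm.~1.3 $(h)$]{thibault}, verify that $b\to b^*$ via $\|b-b^*\|\leq 2\|a-b^*\|$ so the bound applies, note that $d$ is a unit proximal normal, and conclude that the quotient tends to $0$. The only cosmetic difference is that the paper phrases the limit as ``$< \tau'$ for arbitrary $\tau'>0$'' while you take the limit directly, which is the same argument.
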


\begin{proof}
Let $\tau' > 0$.
By \cite[Thm. 1.3 $(h)$]{thibault} prox-regularity at $b^*$ implies that there exist $\epsilon > 0$ and $r > 0$ such that $R(b,d) \ge r$ for every 
$b\in B$ with $\|b-b^*\|\leq \epsilon$ and every $d\in N_B^p(b)$ with $\|d\|=1$.  By shrinking $\epsilon$
if necessary, we may assume $\epsilon^\sigma/(2^\sigma r) < \tau'$.
Now let $a\in A \cap \ball(b^*,\frac{\epsilon}{2})$ be arbitrary, choose $b\in P_B(a)$, and let $d=(a-b)/\|a-b\|$. 
Then as $b^*\in A \cap B$, we have $\|b-b^*\|\leq \|b-a\|+\|a-b^*\|\leq 2\|a-b^*\|\leq \epsilon$. Since
$d\in N_B^p(b)$, the above gives us
$R(b,d)\geq r$. Therefore, 
since $\|a-b\| = d_B(a) \leq \|a-b^*\|$, the quotient
in (\ref{slow}) satisfies 
\[
\frac{\|a-b\|^\sigma}{R(b,d)} \leq \frac{\epsilon^\sigma}{2^\sigma r} < \tau',
\]
and since $\tau' > 0$ was arbitrary, this shows that
(\ref{slow}) is satisfied with $\tau=0$.
\hfill $\square$
\end{proof}

\begin{proposition}
\label{reach}
{\bf (H\"older regularity from slowly vanishing reach)}.
Let $\sigma\in {(0,1)}$.
Suppose $B$ has $\sigma$-slowly vanishing reach with rate $\tau \in [0,1)$ with respect to $A$ at $b^*\in A\cap B$. Then
$B$ is $(1-\sigma)$-H\"older regular with respect to $A$ 
with any constant $c>0$
satisfying 
\begin{eqnarray}
\label{bad}
\frac{\tau}{2} \sqrt{2+c} < 1.
\end{eqnarray}
In particular, $c$ may be chosen arbitrarily small.
\end{proposition}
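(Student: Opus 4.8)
The plan is to check condition~(\ref{empty}) for the exponent $\sigma':=1-\sigma$ directly, by contradiction: if some $b$ lay in the three sets whose intersection should be empty, I would show $b$ lies in the interior of an exterior ball of $B$ at $b^+$, contradicting $b\in B$. First I would fix the constants. The hypothesis $\tfrac{\tau}{2}\sqrt{2+c}<1$ is the same as $\tau^2(2+c)<4$, so by continuity there is $\tau'\in(\tau,1)$ with $(\tau')^2(2+c)<4$; set $2\delta:=4-(\tau')^2(2+c)>0$. The $\sigma$-slowly vanishing reach hypothesis, used with the rate $\tau<\tau'$, yields a neighbourhood $U_0$ of $b^*$ on which $R(b,d)\ge\|a-b\|^\sigma/\tau'$ whenever $a\in A\cap U_0$, $b\in P_B(a)$, $d=(a-b)/\|a-b\|$. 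Finally I would shrink $U_0$ to a ball $U=\ball(b^*,\rho)$ so small that $a^+,b^+\in U$ forces $r:=\|a^+-b^+\|$ to satisfy $4\tau'r^{1-\sigma}<2\delta$; since $2\delta<4$ this also gives $r^\sigma/\tau'>r$.

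I would then fix $a^+\in A\cap U$, $b^+\in P_B(a^+)\cap U$, $r=\|a^+-b^+\|$. If $r=0$ the set in~(\ref{empty}) is empty, since its strict inequality reads $0>0$; so suppose $r>0$, and let $d=(a^+-b^+)/r$ --- a unit proximal normal to $B$ at $b^+$, for which $R:=R(b^+,d)\ge r^\sigma/\tau'$. Supposing for contradiction that some $b\in\ball(a^+,(1+c)r)\cap B$ satisfies $\langle a^+-b^+,b-b^+\rangle>\sqrt{c}\,r^{2-\sigma}\|b-b^+\|$ (the membership $b\in P_A(a^+)^{-1}$ will not be needed), I would decompose $b-b^+=\xi d+v$ with $v\perp d$ and $\eta:=\|v\|$, so that $\langle b-b^+,d\rangle=\xi$, $\|b-b^+\|^2=\xi^2+\eta^2$, $\|b-a^+\|^2=(\xi-r)^2+\eta^2$, and then record three facts: (a) $b\in\ball(a^+,(1+c)r)$ gives $(\xi-r)^2+\eta^2\le(1+c)^2r^2$; (b) $b\in B$ together with the property of the reach --- the ball $\ball(b^++Rd,R)$ touches $B$ at $b^+$ from outside and has no point of $B$ in its interior --- gives $\xi^2+\eta^2\ge 2R\xi$ (read as $\xi\le0$ when $R=\infty$); (c) the angle inequality, divided by $r$ and squared, gives $\xi>0$ and $\xi^2>c\,r^{2-2\sigma}(\xi^2+\eta^2)$.

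The rest is algebra. From (b) and (c), $\xi^2>2c\,r^{2-2\sigma}R\,\xi$, hence $\xi>2c\,r^{2-2\sigma}R>0$; in particular $R<\infty$ (if $R=\infty$, (b) forces $\xi\le0$, against (c)). From (a) and (b), using $(1+c)^2-1=c(2+c)$, one gets $2\xi(R-r)\le c(2+c)r^2$, so $\xi\le c(2+c)r^2/(2(R-r))$. These two bounds on $\xi$ combine to $4R(R-r)<(2+c)r^{2\sigma}$; since $s\mapsto 4s(s-r)$ is increasing on $[r^\sigma/\tau',\infty)$ (as $r^\sigma/\tau'>r/2$) and $R\ge r^\sigma/\tau'$, substituting the lower bound gives $4r^{2\sigma}(1-\tau'r^{1-\sigma})/(\tau')^2<(2+c)r^{2\sigma}$, i.e.\ $4-4\tau'r^{1-\sigma}<(2+c)(\tau')^2=4-2\delta$, i.e.\ $4\tau'r^{1-\sigma}>2\delta$ --- contradicting the choice of $U$. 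Hence no such $b$ exists, (\ref{empty}) holds with exponent $1-\sigma$, and $B$ is $(1-\sigma)$-H\"older regular with respect to $A$ at $b^*$ with constant $c$. As this argument applies to every $c>0$ with $\tau^2(2+c)<4$, and $\tau^2(2+c)\to2\tau^2<4$ as $c\to0^+$, in particular $c$ may be chosen arbitrarily small.

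The step I expect to be the main obstacle is getting the sharp constant $\tfrac{\tau}{2}\sqrt{2+c}<1$. A crude estimate --- bounding $\|b-b^+\|\le\|b-a^+\|+\|a^+-b^+\|\le(2+c)r$ and inserting this into the requirement $\|b-b^+\|\le 2R\sqrt{c}\,r^{1-\sigma}$ that places $b$ inside the exterior ball --- only delivers the far more restrictive $\tfrac{\tau'}{2}\cdot\tfrac{2+c}{\sqrt{c}}<1$. The improvement to $\tfrac{\tau}{2}\sqrt{2+c}<1$ comes precisely from using the ball inclusion (a) and the exterior-ball inequality (b) \emph{jointly} in the $(\xi,\eta)$-plane rather than one at a time; this joint use is what turns the factor $(2+c)/\sqrt{c}$ into $\sqrt{2+c}$. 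A brief word is also due on the degenerate cases $r=0$ and $R=\infty$, both disposed of above.
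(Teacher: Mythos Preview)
Your proof is correct and follows essentially the same approach as the paper: both exploit that the exterior ball $\ball(b^++R(b^+,d)\,d,\,R(b^+,d))$ cannot contain $b\in B$, combine this with $b\in\ball(a^+,(1+c)r)$ and the angle inequality, and then use $R(b^+,d)\ge r^\sigma/\tau'$ to reach a contradiction. The only difference is presentational: the paper works with the cosine theorem and the angle $\beta$, first computing the \emph{smallest} radius $R$ for which $b\in\ball(b^++Rd,R)$ via formula~(\ref{R}) and then arguing $R\ge R(b^+,d)$, whereas you fix $R=R(b^+,d)$ and encode the same three constraints as inequalities in the orthogonal coordinates $(\xi,\eta)$ --- a slightly leaner bookkeeping that yields the identical final inequality $4-4\tau'r^{1-\sigma}<(2+c)(\tau')^2$.
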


\begin{proof}
1)
We have to show that there exists a neighborhood $U$ of $b^*$ such that (\ref{empty})
is satisfied with $c$ as in (\ref{bad}) and with exponent $1-\sigma$. 

By condition  (\ref{bad}) we can choose $\tau' > \tau$ and $\epsilon > 0$ such that
\[
\frac{\tau'}{2} \left(  \epsilon + \sqrt{\epsilon^2 + 2 + c}\right) < 1.
\]
By condition (\ref{slow}), and since $\tau < \tau'$, there exists a neighborhood $U$ of $b^*$ such that whenever $a^+,b^+\in U$,
$a^+\in A$, $b^+\in P_B(a^+)$ and $d=(a^+-b^+)/\|a^+-b^+\|$, then $r^\sigma/R(b^+,d) < \tau'$, where $r:= \|a^+-b^+\|$.
On shrinking $U$ further if necessary, we may arrange that $a^+,b^+\in U$ implies $r^{1-\sigma}=\|a^+-b^+\|^{1-\sigma} < \epsilon$.
We will show that $U$ is the neighborhood we need in condition (\ref{empty}).

2)
To prove this pick $a^+,b^+\in U$, $a^+\in A$, $b^+\in P_B(a^+)$, put $r=\|a^+-b^+\|$,
and let $b\in B$, $b\not= b^+$. We have to show that $b$ is not an element of the set (\ref{empty}$^\prime$). To check this,
let $\beta$ be the angle $\beta = \angle(a^+-b^+,b-b^+)$. Since there is nothing to prove  for $b\not\in \ball(a^+,(1+c)r)$,
we assume $b\in \ball(a^+,(1+c)r)$. Now we have to show that $\cos\beta \leq \sqrt{c}r^{1-\sigma}$. As this is clear for
$\cos\beta \leq 0$, we may assume $\cos\beta > 0$.

Let us define
\begin{equation}
\label{R}
R := \frac{r}{2} \left(1 +  \sqrt{1+\frac{2c+c^2}{\cos^2\beta}} \right),
\end{equation}
where $r,\beta$ are as before. We claim that the ball $\ball(b^++Rd,R)$ contains $b$, where as above
$d=(a^+-b^+)/\|a^+-b^+\|$. To prove this, note that by the cosine theorem, applied
in the triangle $a^+,b^+,b$,  we have
\[
\|a^+-b\|^2 = r^2 + \|b-b^+\|^2 - 2 r \|b-b^+\| \cos\beta.
\]
Since $\|a^+-b\|\leq (1+c)r$, we obtain
\[
r^2 + \|b-b^+\|^2 - 2r\|b-b^+\|\cos\beta \leq (1+c)^2 r^2,
\]
which on completing squares turns out  to be the same as
\[
\|b-b^+\| \leq r \left( \cos \beta + \sqrt{2c+c^2+\cos^2\beta} \right)=2R\cos\beta.
\]
Here the last equality uses the definition (\ref{R}) of $R$. We therefore obtain
\[
\|b-b^+\|^2 \leq 2R \cos\beta \|b-b^+\|, 
\]
and using the cosine theorem again, now in the triangle
$b^++Rd,b^+,b$, we deduce
\[
\|b^++Rd - b\|^2 = R^2 + \|b-b^+\|^2 -2R\|b-b^+\|\cos\beta
\leq R^2.
\]
This gives  $b\in \ball(b^++Rd,R)$ as claimed.

3) 
By the definition (\ref{RR}) of $R(b^+,d)$, any radius $R' < R(b^+,d)$
must give rise to a ball with $\ball(b^++R'd,R') \cap B = \{b^+\}$.  But as we have shown in part 2), the ball 
$\ball(b^++Rd,R)$ contains $b$,  so necessarily $R \geq R(b^+,d)$.
Hence by the choice of $U$
in part 1), $r^\sigma/R \leq r^\sigma/R(b^+,d) < \tau'$, or what is the same, $r^\sigma < R\tau'$.
Substituting the definition  (\ref{R}) of $R$ and multiplying by $r^{-\sigma}$, we deduce
\[
1 < r^{1-\sigma} \tau' \left( \frac{1}{2}+\frac{1}{2}\sqrt{1+ \frac{2c+c^2}{\cos^2\beta}} \right).
\]
Now suppose that $\cos\beta > \sqrt{c}r^{1-\sigma}$, contrary to what we wish to show. Then
\begin{align*}
1 &< r^{1-\sigma} \tau'\left( \frac{1}{2} + \frac{1}{2} \sqrt{1+ \frac{2c +c^2}{c r^{2(1-\sigma)}}} \right)\\
&= \frac{\tau'}{2} \left( r^{1-\sigma} + \sqrt{r^{2(1-\sigma)}+2+c}\right) \\
&<  \frac{\tau'}{2} \left( \epsilon + \sqrt{\epsilon^2+2+c}\right) < 1,
\end{align*}
a contradiction. 
That proves the result. \hfill $\square$
\end{proof}

Since prox-regularity at $b^*\in B$ implies slowly vanishing reach at $b^*$ with respect to any closed set $A$ containing $b^*$,
we have the following immediate consequence.

\begin{corollary}
{\bf (H\"older regularity from prox-regularity)}.
Let $B$ be prox-regular. Then $B$ is $\sigma$-H\"older regular for every $\sigma\in [0,1)$
with a constant $c>0$ that may be chosen arbitrarily small.
\end{corollary}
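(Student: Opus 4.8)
The plan is to obtain the corollary by composing Proposition \ref{prox} with Proposition \ref{reach}, with a brief separate argument for the boundary value $\sigma=0$. Throughout, fix $b^*\in B$ and let $A$ be any closed set with $b^*\in A\cap B$; this is the implicit quantification behind the statement, since H\"older regularity is defined relative to a second set while prox-regularity of $B$ is an intrinsic property, and, as recorded in the sentence just preceding the corollary, prox-regularity should produce regularity of $B$ against every such $A$.

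First I would dispose of the range $\sigma\in(0,1)$. Given such a $\sigma$, set $s:=1-\sigma$, so that $s\in(0,1)\subset(0,1]$. By Proposition \ref{prox}, prox-regularity of $B$ at $b^*$ implies that $B$ has $s$-slowly vanishing reach with respect to $A$ at $b^*$, with rate $\tau=0$. Since $s\in(0,1)$, Proposition \ref{reach} applies with exponent $s$ and yields that $B$ is $(1-s)$-H\"older regular, i.e. $\sigma$-H\"older regular, with respect to $A$, with any constant $c>0$ for which $\frac{\tau}{2}\sqrt{2+c}<1$. As $\tau=0$, this inequality holds for every $c>0$; in particular $c$ may be taken arbitrarily small, and the neighborhood $U$ and constant witnessing $\sigma$-H\"older regularity are precisely those furnished by Proposition \ref{reach}.

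It remains to treat $\sigma=0$, which Proposition \ref{reach} does not deliver directly, since that would require slowly vanishing reach with the inadmissible exponent $s=1$. Here I would appeal to the monotonicity remark of this section --- the one asserting that if $B$ is $\sigma_0$-H\"older regular with respect to $A$ with constant $c$ and $\sigma'<\sigma_0$, then $B$ is $\sigma'$-H\"older regular with respect to $A$ with any constant $c'\in(0,c)$. Applying it with, say, $\sigma_0=\frac12$ (for which the previous paragraph already provides regularity with arbitrarily small constant) and $\sigma'=0$, we conclude that $B$ is $0$-H\"older regular with respect to $A$ with a constant that can still be made arbitrarily small. Alternatively, one could invoke that prox-regular sets are superregular and apply Proposition \ref{regular}, at the cost of importing the former fact from outside the paper. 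Combining the two cases gives the statement for all $\sigma\in[0,1)$.

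I do not expect a genuinely hard step: the corollary is essentially an assembly of the two preceding propositions specialized to $\tau=0$. The only point requiring care is the endpoint $\sigma=0$, where the exponent correspondence $\sigma\leftrightarrow 1-\sigma$ between H\"older regularity and slowly vanishing reach degenerates at the boundary of $(0,1)$, which is what forces the small detour through monotonicity (or superregularity) described above.
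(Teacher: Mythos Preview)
Your proposal is correct and essentially matches the paper's proof: for $\sigma\in(0,1)$ you compose Propositions~\ref{prox} and~\ref{reach} exactly as the paper does, and for $\sigma=0$ the paper takes precisely the route you list as an alternative (prox-regularity $\Rightarrow$ superregularity $\Rightarrow$ Proposition~\ref{regular}). Your primary suggestion for $\sigma=0$ via the monotonicity remark is an equally valid, and arguably more self-contained, variant.
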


\begin{proof}
For $\sigma =0$ this follows from Proposition \ref{regular},  
because prox-regularity implies superregularity.
For $\sigma\in (0,1)$
we obtain it by combining  Propositions \ref{prox} and \ref{reach}.
\hfill $\square$
\end{proof}

Consider the case of a Lipschitz domain $B$. 
Here H\"older regularity may be related to a property of the boundary $\partial B$ of $B$.

\begin{proposition}
\label{h_smooth}
Let $\sigma\in (0,1)$. Suppose $B$ is the epigraph of a locally Lipschitz
function
$f:\mathbb R^{n-1} \to \mathbb R$. Let $x^*\in \mathbb R^{n-1}$ 
and suppose there exists a neighborhood $V$
of $x^*$ and $\mu>0$ such that for every $x_0\in V$ and every proximal subgradient $g\in \partial_pf(x_0)$
the one-sided H\"older estimate $f(x_0)+\langle g,x-x_0\rangle - \mu \|x-x_0\|^{1+\sigma}\leq f(x)$ is satisfied for all $x\in V$.
Then $B$ is $\sigma$-H\"older regular at $(x^*,f(x^*))\in B$ with respect to every closed set $A$
containing $(x^*,f(x^*))$, and for every constant $c>0$ satisfying $\mu \leq \sqrt{c}/(2+c)^\sigma$.
\end{proposition}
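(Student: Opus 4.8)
The plan is to work with the graph description $B=\mathrm{epi}f$. Passing to a small enough ball $U$ around $b^*=(x^*,f(x^*))$, I would first dispose of the trivial case $a^+\in B$ (then $b^+=a^+$, $r=0$, and the set in (\ref{empty}) is empty), and otherwise note that $a^+$ lies strictly below the graph, so that $r=\|a^+-b^+\|>0$ and $b^+=(y_0,f(y_0))$ is a graph point. Choosing $\rho>0$ with $\ball(x^*,\rho)\subset V$ and $f$ Lipschitz on $\ball(x^*,\rho)$, and $U=\ball(b^*,\delta)$ with $\delta=\rho/(6+2c)$, one checks by the triangle inequality that for any candidate $b=(y,\eta)\in B\cap\ball(a^+,(1+c)r)$ both $y_0$ and $y$ land in $\ball(x^*,\rho)\subset V$: indeed $r\le 2\delta$, $\|y-y_0\|\le\|b-b^+\|\le(2+c)r$ and $\|y_0-x^*\|\le\delta$. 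Hence the assumed one-sided H\"older inequality for $f$ is available at the pair $(y_0,y)$.

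Next I would pin down the normal direction. Since $b^+\in P_B(a^+)$, the vector $a^+-b^+=(v,\tau)$ lies in $N_B^p(b^+)$. The epigraph structure forces $\tau\le 0$ (for $\tau>0$ the point $(y_0,f(y_0)+\tau)\in B$ would be at least as close to $a^+$ as $b^+$, and strictly closer unless $v=0$, which in turn would put $a^+$ in $B$); and $\tau=0$ is excluded because a locally Lipschitz epigraph admits no nonzero horizontal proximal normal at a graph point — displacing along the graph from $y_0$ in the direction $v$ strictly decreases the distance to $a^+$, using the Lipschitz bound. Hence $\tau=-s$ with $0<s\le\|a^+-b^+\|=r$, and by the definition of the proximal subdifferential together with positive homogeneity of $N_B^p$ we get $g:=v/s\in\partial_pf(y_0)$.

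The estimate is then immediate. For $b=(y,\eta)\in B$ as above, $\eta\ge f(y)$ and $s>0$ give
\[
\langle a^+-b^+,\,b-b^+\rangle=\langle v,\,y-y_0\rangle-s(\eta-f(y_0))\le s\bigl(f(y_0)+\langle g,\,y-y_0\rangle-f(y)\bigr)\le s\,\mu\,\|y-y_0\|^{1+\sigma},
\]
the last inequality being the hypothesis applied at $y_0$. Bounding $s\le r$, $\|y-y_0\|\le\|b-b^+\|$ and $\|y-y_0\|^\sigma\le\bigl((2+c)r\bigr)^\sigma$ yields
\[
\langle a^+-b^+,\,b-b^+\rangle\le\mu\,(2+c)^\sigma\,r^{1+\sigma}\,\|b-b^+\|\le\sqrt c\;r^{1+\sigma}\,\|b-b^+\|,
\]
using precisely the assumption $\mu\le\sqrt c/(2+c)^\sigma$ in the last step. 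Hence no point of $B\cap\ball(a^+,(1+c)r)$ (and not $b^+$ either) satisfies the strict inequality defining the set in (\ref{empty}); a fortiori the same holds after intersecting with $P_A(a^+)^{-1}$. This establishes (\ref{empty}) on $U$, i.e.\ $\sigma$-H\"older regularity of $B$ at $b^*$ with respect to $A$ with constant $c$.

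The only step that is not routine bookkeeping is the claim $\tau<0$, i.e.\ the absence of nonzero horizontal proximal normals to a locally Lipschitz epigraph at a graph point; that is where local Lipschitz-ness of $f$ is actually used, and everything else follows from the one-sided H\"older inequality and elementary geometry. A second, smaller point to flag is that the defining set in (\ref{empty}) is indexed by $b\in P_A(a^+)^{-1}$; since dropping that condition only enlarges the set, it suffices to prove emptiness for all $b\in B$, as done above.
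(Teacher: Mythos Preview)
Your argument is correct and follows essentially the same route as the paper's proof: identify the proximal normal $a^+-b^+$ with a scalar multiple of $(g,-1)$ for some $g\in\partial_pf$, then combine the one-sided H\"older estimate with the bound $\|y-y_0\|\le(2+c)r$ to reach the conclusion. The paper works with $\cos\beta$ directly and simply asserts the representation $a^+-b^+=t(g,-1)$, whereas you compute the inner product $\langle a^+-b^+,b-b^+\rangle$ and spell out why the vertical component $\tau$ of the normal must be strictly negative (this is where local Lipschitz-ness enters, and the paper leaves it implicit); apart from these presentational differences and a slightly different choice of $\delta$, the two arguments coincide.
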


\begin{proof}
We have to find a neighborhood $U$ of $b^*=(x^*,f(x^*))\in B$ such that (\ref{empty})
is satisfied with exponent $\sigma$ and constant $c$ satisfying $\mu \leq \sqrt{c}/(2+c)^\sigma$. Choose
$\epsilon > 0$ such that $\ball(x^*,\epsilon) \subset V$. Now choose $\delta>0$
with $\delta < \epsilon/(2+c)$ and define
$U = \ball(b^*,\delta)$. We will show that $U$ is as required.

In order to check (\ref{empty}), choose $a^+\in A\backslash B$ and $b^+\in P_B(a^+)$ such that $a^+,b^+\in U$. 
As $b^+\in P_B(a^+)\cap U$, we get  $b^+ =(x_0,f(x_0))\in B$ for some $x_0\in V\subset$ $\mathbb R^{n-1}$, while $a^+\not\in B= {\rm epi} f$ implies
$a^+=(x_1,\xi_1)$ for some $\xi_1 < f(x_1)$. Since $a^+-b^+$ is a proximal normal to $B$ at $b^+$, there exists
a proximal subgradient $g\in \partial_p f(x_0)$ such that $(x_1,\xi_1) = (x_0,f(x_0)) + t(g,-1)$ for some $t>0$. Using
$\|a^+-b^+\|=r$, we can therefore write
\[
a^+-b^+
=\left(  \frac{rg}{\sqrt{1+\|g\|^2}}, -\frac{r}{\sqrt{1+\|g\|^2}} \right).
\]
Now consider $b\in B$, $b\neq b^+$ such that $\|a^+-b\|\leq (1+c)r$. To verify (\ref{empty}$^\prime$) we have to show that
$\cos\beta \leq \sqrt{c} r^\sigma$, where $\beta=\angle(a^+-b^+,b-b^+)$. Since there is nothing to prove
for $\cos\beta \leq 0$, we assume $\cos\beta > 0$.
By the definition of $B$ we have
$b = (x,\xi)$ for some $x\in \mathbb R^{n-1}$ and  $\xi \geq f(x)$.  Now
\begin{align*}
\cos\beta &= \frac{\langle g,x-x_0\rangle - \xi+f(x_0)}{\sqrt{1+\|g\|^2}\sqrt{\|x-x_0\|^2 + \left( \xi-f(x_0)\right)^2}}\\
&\leq \frac{\langle g,x-x_0\rangle - f(x)+f(x_0)}{\|x-x_0\|} \\
&\leq \frac{\mu \|x-x_0\|^{1+\sigma}}{\|x-x_0\|}= \mu \|x-x_0\|^\sigma \leq \sqrt{c} r^\sigma.
\end{align*}
Here the first inequality uses the fact that $\xi \geq f(x)$.
 The second inequality uses the one-sided H\"older 
estimate from the hypothesis. In order to be allowed to use this estimate,
we have to assure that $x\in V$. This follows from
\begin{align*} 
\|x-x^*\| &\leq  \|b-b^*\| \leq \|b-a^+\| + \|a^+-b^*\| \\
&\leq  (1+c)\|a^+-b^+\| + \|a^+-b^*\| \\
&\leq (2+c)\|a^+-b^*\|\leq (2+c)\delta < \epsilon.
\end{align*}
The third inequality
can be seen as follows. We have
\begin{align*}
\|x-x_0\| &\leq  \|b-b^+\| \leq \|b-a^+\| + \| a^+-b^+\| \\
&\leq  (2+c) \|a^+-b^+\| = (2+c)r.
\end{align*}
Hence
\begin{align*}
\mu \|x-x_0\|^\sigma \leq \mu (2+c)^\sigma r^\sigma \leq \sqrt{c}r^\sigma
\end{align*}
by the choice of $c$.
That completes the argument.
\hfill $\square$
\end{proof}

\begin{remark}
The nomenclature in Proposition \ref{h_smooth} can be explained as follows. Lipschitz smoothness \cite{fabian}
of $-f$ at $x_0$ is a well-known second-order property
equivalent to
the second difference quotient
\[
\Delta_2(x) = \frac{f(x) - f(x_0)-\langle g,x-x_0\rangle}{\|x-x_0\|^2} \geq -\mu > -\infty
\]
being bounded below for $g\in \partial f(x_0)$ and $x$
in a neighborhood of $x_0$. 
The H\"older estimate
in Proposition \ref{h_smooth} is  the analogous but  weaker condition $\Delta_{1+\sigma}(\cdot) \geq -\mu > -\infty$
for some $\sigma\in (0,1)$.  In analogy with \cite{fabian} one could call this $\sigma$-H\"older smoothness of $-f$ at $x_0$.
\end{remark}

We consider the following natural
modification of 
amenability  from \cite{rock}. The set $B\subset \mathbb R^n$  is called  $\sigma$-H\"older amenable at $x^*\in B$
if there exists a neighborhood $U$ of $x^*$, a class $C^{1,\sigma}$-mapping
$G:\mathbb R^n \to \mathbb R^m$, and a closed convex set $C\subset \mathbb R^m$,
such that $B \cap U =\{x\in U: G(x) \in C\}$ and
$N_C\left( G(x^*)  \right) \cap {\rm ker}\left( DG(x^*)^T  \right) = \{0\}$, where $DG(x^*)$ denotes the first-order differential of $G$ at $x^*$.
A typical example  in optimization is when $B$ is defined by 
$C^{1,\sigma}$ equality and inequality constraints, where the Mangasarian-Fromowitz constraint qualification holds
at $x^*$,  see \cite[Prop. 2.3]{thibault}, \cite[Chap 10, F.]{rock}, \cite[Prop. 4.8]{luke}.

\begin{proposition}
{\bf (H\"older regularity from H\"older amenability)}.
Suppose the closed set $B$ is $\sigma'$-H\"older amenable
at $x^*$ for some $\sigma'\in (0,1]$. Then $B$ is $\sigma$-H\"older regular
at $x^*$ with respect to any closed set $A$ containing $x^*$  for every $\sigma\in (0,\sigma')$, and with arbitrary constant $c$.
\hfill $\square$
\end{proposition}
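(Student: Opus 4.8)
The plan is to convert the constraint description $B\cap U_0=\{x:G(x)\in C\}$ provided by H\"older amenability into a one-sided H\"older bound on the proximal normals of $B$, and then read (\ref{empty}$^\prime$) off that bound. Fix the neighborhood $U_0$, the $C^{1,\sigma'}$-map $G:\mathbb R^n\to\mathbb R^m$ and the closed convex set $C$ from the definition, so that $N_C(G(x^*))\cap\ker(DG(x^*)^T)=\{0\}$. The first step is a \emph{uniform multiplier bound}: after shrinking $U_0$ to some $U_1$, there is $M>0$ with $\|\lambda\|\le M\,\|DG(b^+)^T\lambda\|$ for all $b^+\in B\cap U_1$ and all $\lambda\in N_C(G(b^+))$. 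This is a routine compactness argument: were it false along a sequence $b^+_k\to x^*$, the normalized multipliers $\lambda_k/\|\lambda_k\|$ would subconverge to some $\bar\lambda$ with $\|\bar\lambda\|=1$, and outer semicontinuity of the normal-cone map of the convex set $C$ together with continuity of $DG$ would force $\bar\lambda\in N_C(G(x^*))\cap\ker(DG(x^*)^T)=\{0\}$, a contradiction. In particular the qualification persists at every $b^+\in B\cap U_1$, so the standard normal-cone calculus for constraint systems (see e.g.\ \cite{rock,thibault}, using that $G$ is $C^1$ and $C$ convex, together with $N_B^p\subseteq N_B$) gives the inclusion $N_B^p(b^+)\subseteq DG(b^+)^T N_C(G(b^+))$ there.

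Next I would combine this with the H\"older Taylor remainder of $G$. Shrinking $U_1$ so that it lies in a convex ball on which $DG$ is $\sigma'$-H\"older with constant $L$, integrating $DG$ along segments gives $\|G(b)-G(b^+)-DG(b^+)(b-b^+)\|\le K\|b-b^+\|^{1+\sigma'}$ with $K=L/(1+\sigma')$ for $b,b^+\in U_1$. If moreover $b,b^+\in B\cap U_1$, then $G(b),G(b^+)\in C$, so for any $u=DG(b^+)^T\lambda\in N_B^p(b^+)$ the convexity inequality $\langle\lambda,G(b)-G(b^+)\rangle\le 0$ yields $\langle u,b-b^+\rangle=\langle\lambda,DG(b^+)(b-b^+)\rangle\le\|\lambda\|\,K\,\|b-b^+\|^{1+\sigma'}\le MK\,\|u\|\,\|b-b^+\|^{1+\sigma'}$. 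This is the desired one-sided H\"older estimate on normals: along $B$ near $x^*$, every proximal normal $u$ at $b^+$ satisfies $\langle u,b-b^+\rangle\le MK\|u\|\,\|b-b^+\|^{1+\sigma'}$.

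Finally I would verify the definition. Given $c>0$, put $U=\ball(x^*,\rho')$ with $\rho'$ small, to be fixed, take $a^+\in A\cap U$, $b^+\in P_B(a^+)\cap U$, $r=\|a^+-b^+\|$, $u=a^+-b^+\in N_B^p(b^+)$, and any $b\in B$, $b\ne b^+$, with $b\in\ball(a^+,(1+c)r)$; it suffices to check $\cos\beta\le\sqrt c\,r^\sigma$ for $\beta=\angle(a^+-b^+,b-b^+)$, and we may assume $\cos\beta>0$. For $\rho'$ small enough one has $b\in U_1$ (since $\|b-x^*\|\le(3+2c)\rho'$) and $\|b-b^+\|\le(2+c)r$, so the estimate above and $\|u\|=r$ give $\cos\beta=\langle u,b-b^+\rangle/(r\|b-b^+\|)\le MK\|b-b^+\|^{\sigma'}\le MK(2+c)^{\sigma'}r^{\sigma'}$. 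Since $\sigma<\sigma'$ and $r\le 2\rho'$, shrinking $\rho'$ so that $MK(2+c)^{\sigma'}(2\rho')^{\sigma'-\sigma}\le\sqrt c$ turns this into $\cos\beta\le\sqrt c\,r^\sigma$, which is (\ref{empty}$^\prime$); as $c>0$ was arbitrary, the constant can be taken arbitrarily small. The one genuinely nonroutine point is the first step: squeezing a \emph{uniform} Hoffman-type multiplier bound $\|\lambda\|\le M\|DG(b^+)^T\lambda\|$ out of the merely pointwise qualification at $x^*$, since this is what lets $\|\lambda\|$ be controlled by $\|u\|=r$; the normal-cone calculus, the Taylor remainder, and the bookkeeping of nested shrinking neighborhoods are all standard.
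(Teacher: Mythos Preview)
Your proof is correct and fills in precisely the details the paper omits. The paper itself does not give a proof: it merely states that ``the proof may be adopted from \cite[Prop.~4.8]{luke} with minor changes, and we skip the details.'' Your argument---the uniform multiplier bound obtained by compactness from the pointwise qualification, the normal-cone inclusion $N_B^p(b^+)\subseteq DG(b^+)^T N_C(G(b^+))$, the $C^{1,\sigma'}$ Taylor remainder, and the resulting one-sided H\"older estimate $\langle u,b-b^+\rangle\le MK\|u\|\,\|b-b^+\|^{1+\sigma'}$ on proximal normals---is exactly the natural adaptation of the superregularity argument in \cite[Prop.~4.8]{luke} to the H\"older setting, and the final shrinking step to pass from exponent $\sigma'$ to any $\sigma<\sigma'$ with arbitrary $c$ is clean. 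One minor remark: your estimate applies to \emph{all} $b\in B$ in the ball, not just those in $P_A(a^+)^{-1}$, so you actually verify a slightly stronger property than (\ref{empty}) requires, which is harmless.
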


The proof may be  adopted from on \cite[Prop. 4.8]{luke} with minor changes, and we skip the details. 
This result suggests that H\"older regularity is settled between the weaker superregularity
and the stronger prox-regularity. This is true as long as we consider this type
of regularity as a property of $B$
alone.  We stress, however,   that it is the combination with $A$ and the shrinking distance between
the  sets in (\ref{empty}) which makes our definition \ref{regular} truly versatile in applications. 
This is corroborated by the following observation.

\begin{proposition}
\label{kill}
{\bf (H\"older regularity from intrinsic transversality)}.
Suppose $A$, $B$ are intrinsically transversal at $x^*$ 
with constant of
transversality $\kappa\in (0,1]$. Then  $B$ is
$\sigma$-H\"older regular at $x^*$ with respect to $A$ for every $\sigma\in [0,1)$, with any constant  $c>0$
satisfying $c < \frac{\kappa^2}{1-\kappa^2}$. 
\end{proposition}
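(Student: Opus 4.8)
The plan is to argue by contradiction, on the template of Propositions \ref{reach} and \ref{h_smooth}: assume the forbidden set in (\ref{empty}) is non‑empty and manufacture from it a pair of points violating the intrinsic transversality estimate (\ref{intrinsic}). First I fix the neighborhood. Let $U_0$ be a neighborhood of $x^*$ on which (\ref{intrinsic}) holds; choose $\varepsilon>0$ so small that $\ball(x^*,K\varepsilon)\subset U_0$ for a constant $K$ depending only on $c$ (the crude bound $\|b-x^*\|\le(1+c)r+\varepsilon$ together with $r\le 2\varepsilon$ shows $K=3+2c$ works), and set $U:=\ball(x^*,\varepsilon)$. When $\sigma>0$ I shrink $U$ further so that $\sqrt c\, r^{\sigma}$ falls below whatever absolute threshold the final estimate requires; when $\sigma=0$ no such room is available and the whole argument must live off the numerical hypothesis $c<\kappa^{2}/(1-\kappa^{2})$.

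Now suppose $a^+\in A\cap U$, $b^+\in P_B(a^+)\cap U$, $r=\|a^+-b^+\|$, and that there is $b\in B$ with $a^+\in P_A(b)$, $\|b-a^+\|\le(1+c)r$ and $\cos\beta>\sqrt c\, r^{\sigma}$, where $\beta=\angle(a^+-b^+,b-b^+)$. Several easy observations come first: $a^+\notin B$ since $d_B(a^+)=r>0$; $b\notin A$, since $a^+\in P_A(b)$ and $b\in A$ would force $a^+=b\in B$; $b\in U_0$ by the choice of $K$; $\|a^+-b\|\ge r$ because $b\in B$ and $b^+$ realizes $d_B(a^+)$; and $b-a^+\in N_A^p(a^+)$, $a^+-b^+\in N_B^p(b^+)$. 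Moreover $b\to a^+\to b^+$ is a building block, so Proposition \ref{hoax} gives $\langle b-a^+,b^+-a^+\rangle\le(1-\tfrac{\kappa^{2}}{2})\|b-a^+\|\,\|b^+-a^+\|$, a bound on the angle at $a^+$ that I expect to need to control $\beta$.

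The engine is two applications of (\ref{intrinsic}). Applied to the pair $(a^+,b)\in (A\cap U_0\setminus B)\times(B\cap U_0\setminus A)$: since $\tfrac{a^+-b}{\|a^+-b\|}\in -N_A^p(a^+)$, the second entry of the maximum vanishes, whence $d\!\big(\tfrac{a^+-b}{\|a^+-b\|},N_B^p(b)\big)\ge\kappa$. Applied to the pair $(a^+,b^+)$ — the degenerate sub‑case $b^+\in A$ being dealt with separately — since $\tfrac{a^+-b^+}{r}\in N_B^p(b^+)$ the first entry vanishes, whence $d\!\big(\tfrac{a^+-b^+}{r},-N_A^p(a^+)\big)\ge\kappa$; feeding in the unit vector $\tfrac{a^+-b}{\|a^+-b\|}\in -N_A^p(a^+)$ and using $\kappa^{2}+(1-\kappa^{2})=1$ gives $\big\langle\tfrac{a^+-b^+}{r},\tfrac{a^+-b}{\|a^+-b\|}\big\rangle\le\sqrt{1-\kappa^{2}}$, i.e.\ $r-\|b-b^+\|\cos\beta\le\|a^+-b\|\sqrt{1-\kappa^{2}}$.

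The real work, and the main obstacle, is the closing step: from $r\le\|a^+-b\|\le(1+c)r$, the law of cosines in the triangle $a^+b^+b$, and $\cos\beta>\sqrt c\, r^{\sigma}$, I must produce a proximal normal $e\in N_B^p(b)$ with $\|e-\tfrac{a^+-b}{\|a^+-b\|}\|<\kappa$, contradicting the first displayed inequality. As in the proof of Proposition \ref{reach} I expect to do this by constructing a ball $\ball(b+Re,R)$ through $b$, with $e$ pointing roughly from $b$ toward $a^+$ and $R$ comparable to $r$; because $\operatorname{int}\ball(a^+,r)\cap B=\emptyset$ this ball is forced to be (essentially) $B$‑free, so its outward direction at $b$ is the desired proximal normal, and the arithmetic should close exactly when $\tfrac{c}{1+c}<\kappa^{2}$, that is $c<\tfrac{\kappa^{2}}{1-\kappa^{2}}$. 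Making this ball‑and‑normal construction valid uniformly over all admissible configurations — in particular over those in which $b$ lies "beyond" $a^+$ as seen from $b^+$, where the pair $(a^+,b^+)$ by itself yields nothing — is where the difficulty lies.
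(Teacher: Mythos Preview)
Your proposal is incomplete by your own admission, and the route you are on is harder than necessary. The paper's proof is a two-line affair once one invokes the key quantitative consequence of intrinsic transversality from \cite[p.~6]{obsolete}: on a suitable neighborhood $U$ of $x^*$, whenever $b\in B\cap U$ and $a^+\in P_A(b)\cap U$ one has
\[
d_B(a^+)\le (1-\kappa^2)\,\|b-a^+\|.
\]
With this in hand the set in (\ref{empty}) is shown to be empty \emph{without ever touching the angle condition}: if $b$ lies in it then $b\in P_A(a^+)^{-1}\cap B$ and $\|b-a^+\|\le(1+c)r$, so
\[
r=d_B(a^+)\le(1-\kappa^2)\|b-a^+\|\le(1-\kappa^2)(1+c)\,r<r,
\]
the last inequality being exactly $c<\kappa^2/(1-\kappa^2)$. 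No dependence on $\sigma$, no law of cosines, no ball construction.

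Your argument misses this lemma and tries to squeeze everything out of (\ref{intrinsic}) directly. The estimate you extract from the pair $(a^+,b^+)$, namely $\big\langle\tfrac{a^+-b^+}{r},\tfrac{a^+-b}{\|a^+-b\|}\big\rangle\le\sqrt{1-\kappa^2}$, is correct but too weak: even in the favorable case $\|b-b^+\|\cos\beta\ge 0$ it yields only $1\le(1+c)\sqrt{1-\kappa^2}$ as the would-be contradiction, which is \emph{not} implied by $(1+c)(1-\kappa^2)<1$. This gap is precisely what the sharper factor $1-\kappa^2$ from \cite{obsolete} closes. Your proposed remedy --- constructing $e\in N_B^p(b)$ via a ball tangent to $B$ at $b$ in the spirit of Proposition~\ref{reach} --- cannot work as stated: knowing $\operatorname{int}\ball(a^+,r)\cap B=\emptyset$ gives a $B$-free ball touching $b^+$, not one touching $b$, and there is no reach information at $b$ to exploit. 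The ``difficulty'' you flag is real and, as far as I can see, not resolvable along these lines with the stated constant.
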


\begin{proof}
1)
By \cite[Def. 2.2]{obsolete} and \cite[Prop. 6.9]{obsolete} there exists a neighborhood
$V$ of $x^*$ such that 
max$\left\{ {\rm dist}(u, N_B(b)),{\rm dist}(u,- N_A(a^+))\right\} \geq \kappa>0$
for all $a^+\in A\cap V \setminus B$, $b\in B\cap V\setminus A$, and $u=(a^+-b)/\|a^+-b\|$. 
Following entirely the argument in \cite[page 6]{obsolete},
one can now find a smaller neighborhood $U$ of $x^*$
such that the following is true: If $b\in B \cap U$ and $a^+\in P_A(b)\cap U$,
then even $d_B(a^+) \leq (1-\kappa^2) \|b-a^+\|$.

2)
We claim that $U$ is the neighborhood required in $\sigma$-H\"older regularity with constant $c$. To check this, we have to show that the set (\ref{empty})
is empty. We assume that $b\in U$ is an element of that set. Then $b\in P_A(a^+)^{-1}\cap B$ and $b\in \ball(a^+,(1+c)r)$. Hence we are in the situation
of part 1), which means $r=d_B(a^+) \leq (1-\kappa^2)\|b-a^+\| \leq (1-\kappa^2)(1+c) r < r$, a contradiction.
Hence the set (\ref{empty}) is empty.
\hfill $\square$
\end{proof}

\section{Convergence}
\label{convergence}
In this section we prove the main convergence result. Alternating projections converge
locally for sets which intersect separably,  
if one of the sets is H\"older regular with respect to the other. The proof requires the following preparatory lemma.

\begin{lemma}
\label{third}
{\bf (Three-point estimate)}.
Suppose $B$ intersects $A$ separably at $x^*\in A\cap B$ with exponent
$\omega\in [0,2)$ and constant  $\gamma >0$ on the neighborhood $U$ of $x^*$.
Suppose 
$B$ is also $\omega/2$-H\"older regular
at $x^*\in A\cap B$ with respect to $A$ on $U$ with constant $c>0$ satisfying
$c < \frac{\gamma}{2}$. 
Then there exists   $0<\ell< 1$, depending only on $\gamma,c$ and $U$,  such that
\begin{eqnarray}
\label{strong}
\|a^+-b^+\|^2 + \ell \|b-b^+\|^2 \leq \|a^+-b\|^2
\end{eqnarray}
for every building block $b\to a^+\to b^+$ in $U$.
\end{lemma}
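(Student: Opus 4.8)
The plan is to reduce (\ref{strong}) to a scalar inequality between the three distances
$r=\|a^+-b^+\|$, $s=\|b-b^+\|$ and $R=\|a^+-b\|$, and then to bound these using the two
hypotheses separately. Expanding $R^2=\|(a^+-b^+)+(b^+-b)\|^2$ gives the projection identity
\[
R^2-r^2 = s^2 - 2\langle a^+-b^+,\,b-b^+\rangle,
\]
so (\ref{strong}), i.e. $r^2+\ell s^2\le R^2$, is equivalent to
$2\langle a^+-b^+,b-b^+\rangle \le (1-\ell)s^2$. If $s=0$ then $b=b^+$, hence $R=r$, and (\ref{strong})
holds trivially; so I would assume $s>0$ from here on. The decisive structural point is that the
H\"older estimate (\ref{empty}) controls $\langle a^+-b^+,b-b^+\rangle$ \emph{only} for points $b$ inside
the ball $\ball(a^+,(1+c)r)$, which forces a case distinction according to the size of $R$ relative to $r$.

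First I would treat the benign case $R\le(1+c)r$. Then $b\in\ball(a^+,(1+c)r)\cap B$, and since
$a^+\in P_A(b)$ also $b\in P_A(a^+)^{-1}$; as $b\ne b^+$, emptiness of the set in (\ref{empty}) with
$\sigma=\omega/2$ yields $\langle a^+-b^+,b-b^+\rangle \le \sqrt{c}\,r^{1+\omega/2}s$. To turn this into
the required factor I would call on separability to bound $s$ from below. Writing
$\alpha=\angle(b-a^+,b^+-a^+)$, the angle condition (\ref{angle}) reads $\cos\alpha\le 1-\gamma r^\omega$,
and the law of cosines in the triangle $a^+,b,b^+$ gives
$s^2=R^2+r^2-2Rr\cos\alpha\ge(R-r)^2+2\gamma R r^{1+\omega}\ge 2\gamma\,r^{2+\omega}$, the last step using
$R\ge r$. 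Hence $r^{1+\omega/2}\le s/\sqrt{2\gamma}$, and substituting into the projection identity I
would obtain $R^2-r^2\ge s^2-2\sqrt{c}\,r^{1+\omega/2}s\ge(1-\sqrt{2c/\gamma})\,s^2$. This is (\ref{strong})
for any $\ell\le 1-\sqrt{2c/\gamma}$, a quantity that is positive precisely because $c<\gamma/2$.

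The hard part will be the complementary case $R>(1+c)r$, where (\ref{empty}) is unavailable and
separability alone is useless, since it only bounds $\cos\alpha$ from above while the argument above
would need it bounded below. Here I would drop the inner-product estimate entirely and argue crudely:
the triangle inequality gives $s\le R+r$, whereas $R>(1+c)r$ creates a large gap $R^2-r^2$. Quantitatively,
$R/r>1+c$ implies $R-r>\ell(R+r)$ as soon as $\ell\le\frac{c}{2+c}$, and therefore
$R^2-r^2=(R-r)(R+r)>\ell(R+r)^2\ge\ell s^2$. Combining the two regimes, the lemma will hold with
$\ell=\min\{1-\sqrt{2c/\gamma},\,\tfrac{c}{2+c}\}$, which lies in $(0,1)$ and depends only on $\gamma$ and
$c$ (the neighborhood $U$ enters only through where the hypotheses are valid). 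I expect the one genuinely
delicate point to be verifying that the crude triangle bound really does cover the full range
$R>(1+c)r$ uniformly, since it is this observation that legitimizes ignoring H\"older regularity exactly
where it fails; everything else is routine cosine-rule bookkeeping of the kind sketched above.
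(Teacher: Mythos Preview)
Your proof is correct and reaches the same constant $\ell=\min\{1-\sqrt{2c/\gamma},\,c/(2+c)\}$ as the paper (the paper also throws in a redundant $\tfrac12$ coming from a trivial sub-case). The organization differs, however. The paper splits according to the angle $\beta=\angle(a^+-b^+,b-b^+)$: first $\beta>\pi/2$ (trivial), then $\cos\beta\le\sqrt{c}\,r^{\omega/2}$ (separability gives the bound, same as your first case), and finally $\cos\beta>\sqrt{c}\,r^{\omega/2}$, where H\"older regularity forces $R>(1+c)r$ and the paper then runs a cosine-theorem/quadratic-root argument in $X=s/r$ to obtain $s\ge(2+c)r\cos\beta$, which is (\ref{must}) with $\ell=c/(2+c)$. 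Your dichotomy on $R$ versus $(1+c)r$ is the contrapositive of the paper's last step, but your treatment of the regime $R>(1+c)r$ is genuinely simpler: the bare triangle inequality $s\le R+r$ together with $R-r>\tfrac{c}{2+c}(R+r)$ already gives $R^2-r^2>\ell s^2$, bypassing the cosine law and the polynomial $P(X)$ entirely. What the paper's route buys in exchange is a slightly sharper intermediate estimate $s/r\ge\cos\beta+\sqrt{\cos^2\beta+c(c+2)}$, but this is not exploited anywhere, so your streamlining loses nothing.
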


\begin{proof}
1)
By the cosine theorem we have
\[
\|a^+-b\|^2 = \|a^+-b^+\|^2 + \|b-b^+\|^2 - 2 \|a^+-b^+\|\|b-b^+\| \cos\beta,
\] 
where $\beta = \angle(b-b^+,a^+-b^+)$.
Hence in order to assure (\ref{strong})
we have to find  $\ell\in (0,1)$ such that
\begin{eqnarray}
\label{must}
\textstyle\frac{1-\ell}{2} \|b-b^+\| \geq \|a^+-b^+\| \cos \beta
\end{eqnarray}
for all building blocks $b\to a^+\to b^+$ in $U$.
We consider two cases. Case I is when
$\beta \in ( \frac{\pi}{2},\pi]$. Case II is $\beta \in [0,\frac{\pi}{2}]$.

2)
We start by discussing case I.
For angles $\beta \in (\frac{\pi}{2},\pi]$ we have $\cos\beta < 0$, hence
(\ref{must})  is trivially true if we choose any
$0 < \ell  < 1$. For instance $\ell = \frac{1}{2}$ will do. To establish
(\ref{must}) we may now concentrate on case II,
where  $\beta \in [0,\frac{\pi}{2}]$.

3) In case II  we want to use $\omega/2$-H\"older regularity of $B$ with respect to $A$. 
We subdivide case II in two subcases. Case IIa is when $\cos \beta\leq \sqrt{c}\|a^+-b^+\|^{\frac{\omega}{2}}$. 
Case IIb is when $\cos \beta>\sqrt{c}\|a^+-b^+\|^{\frac{\omega}{2}}$.

Let us start with case IIa, where $\cos\beta\leq \sqrt{c}\|a^+-b^+\|^{\frac{\omega}{2}}$.  Observe that
\begin{align*}
\|b-b^+\|^2 &= \|b-a^+\|^2 +\|a^+-b^+\|^2-2\langle b-a^+,b^+-a^+\rangle\\
&= (\|b-a^+\|-\|a^+-b^+\|)^2 +2\|b-a^+\|\|a^+-b^+\|\left(1-\cos \alpha\right)\\
&\geq 2\|b-a^+\|\|a^+-b^+\|\left(1-\cos  \alpha \right),
\end{align*}
where  $\alpha = \angle (b-a^+,b^+-a^+)$. By the angle condition $(\ref{angle}')$  we have
$1-\cos \alpha \geq \gamma\|a^+-b^+\|^\omega$ for every building block $b\to a^+\to b^+$ in $U$. Hence
\begin{eqnarray*}
\|b-b^+\|^2 \geq 2\gamma \|b-a^+\|\|a^+-b^+\|^{1+\omega}
\geq 2\gamma \|a^+-b^+\|^{2+\omega},
\end{eqnarray*}
where the last estimate uses $b^+\in P_B(a^+)$. Altogether we obtain
\begin{eqnarray*}
\|b-b^+\|\geq \sqrt{2\gamma} \|a^+-b^+\|^{1+\frac{\omega}{2}}
\geq   \sqrt{\frac{{2\gamma}}{c}} \|a^+-b^+\|\cos \beta,
\end{eqnarray*}
bearing in mind that we are in case IIa. To assure (\ref{must})
we put $\ell = 1 - \sqrt{\frac{2c}{\gamma}}$.
Then $\ell\in (0,1)$,  because of the hypothesis  $c<\frac{\gamma}{2}$.

4)
Let us now deal with case IIb, where $\cos \beta >\sqrt{c}\|a^+-b^+\|^{\frac{\omega}{2}}$. 
By H\"older regularity (\ref{empty}) of  $B$ with respect to $A$ and since $a^+\in P_A(b)$, we have 
$b\not\in \ball(a^+,(1+c )r)$, where $r=\|a^+-b^+\|$. In other words, $\|b-a^+\|  > (1+c)\|a^+-b^+\|$. 
Using this and the cosine theorem again, we have
\begin{align*}
\|b-b^+\|^2 &= \|b-a^+\|^2 - \|a^+-b^+\|^2+2 \|b-b^+\| \|a^+-b^+\|\cos \beta\\
&\geq  c (c +2)\|a^+-b^+\|^2+2 \|b-b^+\| \|a^+-b^+\|\cos \beta.
\end{align*}
Since $a^+\not = b^+$,
this may be rearranged as
\begin{eqnarray}
\label{arranged}
\displaystyle\frac{\|b-b^+\|^2}{\|a^+-b^+\|^2} -2\frac{\|b-b^+\|}{\|a^+-b^+\|}\cos\beta - c (c  +2)\geq 0.
\end{eqnarray}
Hence (\ref{arranged}) implies that
the polynomial $P(X)=X^2-2X\cos \beta-c (c +2)$ is nonnegative at $X=\frac{\|b-b^+\|}{\|a^+-b^+\|}$. 
But for nonnegative $X$,   nonnegativity $P(X)\geq 0$ is equivalent to
\begin{eqnarray*}
X\geq  \cos \beta +\sqrt{\cos^2 \beta+c (c +2)} = \cos \beta \left(1+\sqrt{1+\frac{c  (c  +2)}{\cos^2 \beta}}\right).
\end{eqnarray*}
Hence for $X=\frac{\|b-b^+\|}{\|a^+-b^+\|}$ we  know that
\begin{eqnarray}
\label{holds}
\frac{\|b-b^+\|}{\|a^+-b^+\|} \geq\cos \beta  \left(1+\sqrt{1+\frac{c (c +2)}{\cos^2\beta}}\right).
\end{eqnarray}
Put $\Theta_{r,\beta} = 1+ \sqrt{1+\frac{c(c +2)}{\cos^2\beta}}$, then
$\Theta_{r,\beta} \geq c+2$.  
Hence $\ell = \frac{c}{2+c}$ assures (\ref{must})  in case IIb.

5) In conclusion, if we put $\ell = \min\left \{\frac{1}{2}, 1-\sqrt{\frac{2c}{\gamma}} ,\frac{c}{2+c} \right\}$, 
with $c<\frac{\gamma}{2}$, then (\ref{must}) is satisfied in all cases. 
\hfill $\square$
\end{proof}

\begin{theorem}
\label{theorem1}
{\bf (Local convergence)}.
Suppose $B$ intersects $A$ separably at $x^*\in A\cap B$ with exponent $\omega\in [0,2)$ and constant $\gamma$
and   is $\omega/2$-H\"older regular at $x^*$ with respect to $A$ and constant $c < \frac{\gamma}{2}$. 
Then there exists a neighborhood $V$ of $x^*$ such that every sequence of alternating projections between $A$ and $B$ which enters
$V$,  converges to a point $b^*\in A \cap B$.
\end{theorem}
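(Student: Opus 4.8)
The plan is to derive from the three-point estimate (Lemma \ref{third}) that the alternating sequence is Cauchy. The key observation is that \eqref{strong} applied to a building block $b\to a^+\to b^+$ gives $\|a^+-b^+\|^2 \leq \|a^+-b\|^2 - \ell\|b-b^+\|^2$, and since $a^+\in P_A(b)$ means $\|a^+-b\| = d_A(b) \leq \|a-b\|$ (where $a$ is the previous $A$-iterate, with $b\in P_B(a)$), we obtain the monotone decay $\|a^+-b^+\|^2 \leq \|a-b\|^2 - \ell\|b-b^+\|^2$. Hence the gaps $r_k := \|a_k - b_k\|$ form a nonincreasing sequence, and telescoping yields $\ell\sum_k \|b_k-b_{k+1}\|^2 \leq \|a_1-b_1\|^2 < \infty$, so $\|b_k-b_{k+1}\|\to 0$. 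An entirely symmetric bookkeeping on building blocks $a\to b\to a^+$ would require separability of $B$ with respect to $A$ in the other direction; but in fact one does not need it: from $\|b^+-a^+\| \leq \|b-a^+\|$ (since $b^+\in P_B(a^+)$) together with the decay one also controls $\|a_k - a_{k+1}\|$, because $\|a_{k+1}-b_k\| = d_A(b_k) \le \|a_k-b_k\|$ and $\|a_k - b_k\|, \|a_{k+1}-b_k\| \to r^*$ for the common limit $r^*$ of the gaps, which forces $\|a_{k+1}-a_k\| \to 0$ as well by a simple triangle/parallelogram argument.

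**Next I would** upgrade ``$\|b_k - b_{k+1}\|\to 0$'' to summability of the step lengths, which is what gives a genuine limit rather than merely a continuum of accumulation points. This is where the $\omega/2$-H\"older regularity must be used quantitatively again — I expect the argument to bound the step length $\|b_k - b_{k+1}\|$ (or $\|a_k-a_{k+1}\|$) by a power of the gap decrement $r_k^2 - r_{k+1}^2$, or more precisely to show a recursion of the form $r_{k+1}^2 \leq r_k^2 - \text{const}\cdot r_k^{2+\omega}$ or similar, from which one reads off both that $r_k \to 0$ (indeed at a polynomial rate $\mathcal O(k^{-\rho})$, matching the abstract's claimed rate) and that $\sum_k \|b_k - b_{k+1}\| < \infty$. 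The mechanism is that in Lemma \ref{third}'s Case IIa the bound $\|b-b^+\| \geq \sqrt{2\gamma}\,\|a^+-b^+\|^{1+\omega/2}$ already links the step length to a power of the gap, and combining this with the telescoped sum one gets a L\"ojasiewicz-type length estimate; I would choose the neighborhood $V$ small enough (e.g. $V = \ball(x^*, \rho)$ with $\rho$ small compared to the radius of $U$) so that all iterates of a sequence entering $V$ stay inside $U$, which is automatic once the total length is finite and the first iterate is close to $x^*$.

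**The main obstacle** I anticipate is the bootstrapping/confinement step: one needs the iterates to remain in $U$ in order to keep applying Lemma \ref{third}, but a priori they could wander out of $U$ before the summability estimate takes effect. The standard fix is to shrink $V$ so that $\text{diam}(V)$ plus the a priori bound on the total path length (estimated in terms of $d(a_1, x^*)$ alone, using the step-length-vs-gap inequality summed geometrically or via a L\"ojasiewicz telescoping trick $\sum_k r_k \lesssim r_0 + (\text{something})(r_0)$) stays within $U$; one runs an induction showing that as long as the iterates are in $U$ the tail length is controlled, hence they never leave. Once confinement and summability are in hand, $\sum_k \|b_k - b_{k+1}\| < \infty$ and $\sum_k \|a_k - a_{k+1}\| < \infty$ make $(a_k)$ and $(b_k)$ Cauchy; they share a limit $b^*$ because $\|a_k - b_k\| = d_A(b_k)$ is forced to $0$ (the monotone gap limit $r^*$ must be $0$, since otherwise the decrement $\sum (r_k^2 - r_{k+1}^2)$ would dominate a divergent sum via the $r_k^{2+\omega}$ lower bound on decrements, contradicting its finiteness); and $b^* \in A \cap B$ by closedness of both sets together with $a_k \in A$, $b_k \in B$, $a_k \to b^*$, $b_k \to b^*$.
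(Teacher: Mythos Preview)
Your proposal shares the paper's architecture: three-point estimate $\Rightarrow$ four-point estimate $d_B(a_{k+1})^2+\ell\|b_k-b_{k+1}\|^2\leq d_B(a_k)^2$, the angle-condition lower bound $\|b_{k-1}-b_k\|\geq\sqrt{2\gamma}\,r_k^{1+\omega/2}$ (which, incidentally, holds for every building block in $U$, not just in Case~IIa of Lemma~\ref{third}, since its derivation uses only the separability inequality), and a confinement-by-induction argument intertwined with a \L ojasiewicz-type length bound. All of these are the right ingredients.

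The genuine gap is in the summability step. You assert that from the recursion $r_{k+1}^2\leq r_k^2-\text{const}\cdot r_k^{2+\omega}$ one ``reads off'' both $r_k=\mathcal O(k^{-1/\omega})$ and $\sum_k\|b_k-b_{k+1}\|<\infty$. The rate follows, but summability does not: with only $r_k=\mathcal O(k^{-1/\omega})$ and the crude bound $\|b_k-b_{k+1}\|\leq 2r_k$ you get a convergent series only for $\omega<1$, not for $\omega\in[1,2)$; and the finer bound $\|b_k-b_{k+1}\|\leq\ell^{-1/2}\sqrt{r_k^2-r_{k+1}^2}$ does not telescope. The paper closes this gap by combining the four-point estimate and the lower bound on $\|b_{k-1}-b_k\|$ with the concavity of $s\mapsto s^{1-\theta}$, $\theta=(2+\omega)/4$, to obtain
\[
\frac{\|b_k-b_{k+1}\|^2}{\|b_{k-1}-b_k\|}\leq C\bigl(d_B(a_k)^{2(1-\theta)}-d_B(a_{k+1})^{2(1-\theta)}\bigr),
\]
and then AM--GM gives $\|b_k-b_{k+1}\|\leq\tfrac12\|b_{k-1}-b_k\|+\tfrac{C}{2}\bigl(d_B(a_k)^{2(1-\theta)}-d_B(a_{k+1})^{2(1-\theta)}\bigr)$, which telescopes directly to summability. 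This is precisely the ``\L ojasiewicz telescoping trick'' you allude to in your third paragraph; it is the crux of the proof and must be made explicit, because it is what bridges square-summability (which you do establish) and the genuine summability needed for a Cauchy sequence across the full range $\omega\in[0,2)$.
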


\begin{proof}
1)
By hypothesis there exists a neighborhood
$U=\ball(x^*,4\epsilon)$ of $x^*\in A \cap B$ such that  
every building block $b\to a^+\to b^+$ with $b,a^+,b^+\in U$ satisfies the angle
condition $1-\cos\alpha \geq \gamma \|b^+-a^+\|^\omega$, 
where $\alpha = \angle(b-a^+,b^+-a^+)$. In addition, by shrinking $U$ if necessary,
we may assume 
that  $B$ is $\omega/2$-H\"older regular at $x^*$ on $U$ with constant $c < \frac{\gamma}{2}$.
Then by the three-point estimate (Lemma \ref{third}) there exists $\ell\in (0,1)$
depending only on $c,\gamma$ and $U$, such that
$\|a^+-b^+\|^2 + \ell \|b-b^+\|^2 \leq \|a^+-b\|^2$ for every such building block. Since
$\|a^+-b\| \leq \|a-b\|$, we deduce the following four-point estimate
\begin{eqnarray}
\label{four}
d_B(a^+)^2 + \ell \|b-b^+\|^2 \leq d_B(a)^2 
\end{eqnarray}
for building blocks $a\to b\to a^+\to b^+$  with  $b,a^+,b^+\in U$.

2)
Define the constants $\theta = (\omega+2)/4$ and $C=1/((1-\theta)\ell \sqrt{2\gamma}))$.
Choose $\delta > 0$ such that
\begin{center}
$9\delta+C 2^{2(1-\theta)}\delta^{2(1-\theta)} < \frac{\epsilon}{4}$,
\end{center}
which implies  $16\delta<\epsilon$. Then define the neighborhood $V$ as $V=\ball(x^*,\delta)$.
We have to show that if the alternating sequence enters $V$,
then it converges to a unique limit $b^*\in A \cap B$. 
By relabeling the sequence, we may without loss of generality assume that
$b_0\in V = \ball(x^*,\delta)$.
The case where the $a_k$'s reach
$V$ first  is treated analogously.

We shall prove by induction
that for every $k\geq 1$, we have
\begin{eqnarray}
\label{ind1}
b_k, 
a_{k+1}, b_{k+1} \in \ball(x^*,\epsilon)
\end{eqnarray}
and
\begin{eqnarray}
\label{ind2}
\sum_{{j=1}}^k \|b_j-b_{j+1}\| \leq \frac{1}{2} \sum_{{j=1}}^k \|b_{j-1}-b_j\| + \frac{C}{2} \left( d_B({a_{1}})^{2(1-\theta)}
- d_B(a_{k+1})^{2(1-\theta)} \right).
\end{eqnarray}

Let us first do the induction step and suppose that hypotheses (\ref{ind1}), (\ref{ind2})
are true at $k-1$  for some $k\geq 2$.  We have to show that they also hold at $k$. 

2.1) 
Firstly we check (\ref{ind1}) at $k$. By (\ref{ind1}) at $k-1$ we know that $b_{k-1},a_k,b_k\in \ball(x^*,\epsilon)$. So  it remains
to prove $a_{k+1},b_{k+1}\in \ball(x^*,\epsilon)$.
We claim that $b_k\in \ball(x^*,\frac{\epsilon}{4})$. Indeed, using the induction hypothesis (\ref{ind2}) at $k-1$  gives
$$
\displaystyle \sum_{{j=1}}^{k-1} \|b_j-b_{j+1}\| \leq  \frac{1}{2}\sum_{{j=1}}^{k-1} \|b_j-b_{j-1}\| +\frac{C}{2} \left( d_B({a_{1}})^{2(1-\theta)} - d_B(a_{k})^{2(1-\theta)} \right).
$$
Hence
\begin{align*}
\displaystyle \sum_{{j=1}}^{k-1} \|b_j-b_{j+1}\| &\leq \|{b_{0}}-{b_{1}}\| + C\,d_B({a_{1}})^{2(1-\theta)} - C\,d_B(a_{k})^{2(1-\theta)} -\|b_{k-1}-b_{k}\|  \nonumber \\
&\leq    \|{b_{0}-b_{1}}\| + C\,d_B({a_{1}})^{2(1-\theta)}.\\ \nonumber
\end{align*}
Therefore,
\begin{align}
\label{etape}
\|b_{k} - x^*\| &\leq \|b_{k}- {b_{1}}\| + \|{b_{1}}-x^*\|\nonumber
\leq \sum_{{j=  1}}^{k-1} \|b_{j+1}-b_{j}\| + \|{b_{1}}-x^*\|\\
&\leq   \|{b_{0}-b_{1}}\| + C\,d_B({a_{1}})^{2(1-\theta)}+ \|{b_{1}}-x^*\|.
\end{align}
Since $b_0\in \ball(x^*,\delta)$, we have $\|b_{0}-a_{1}\| \leq \|b_0-x^*\| \leq  \delta$, 
hence $a_{1}\in \ball(x^*,2\delta).$
Then $\|{a_{1}-b_{1}}\| \leq \|{a_{1}}-x^*\| \leq 2 \delta$, which gives 
$\|{b_{1}} - x^*\|\leq 4 \delta.$
It follows that
$\|{b_{0}-b_{1}}\| \leq 5 \delta.$
Now since $d_B({a_{1}})=\|{a_{1}-b_{1}}\| \leq 2\delta$, going back to (\ref{etape}), we obtain
$$\|b_{k} - x^*\| \leq 9\delta + C2^{2(1-\theta)}\delta^{2(1-\theta)}<\frac{\epsilon}{4},$$
{which is our above claim. Now this implies}
$$\|a_{k+1}-x^*\| \leq \|a_{k+1}-b_{k}\| +\|b_k-x^*\|\leq 2\|b_k-x^*\|<\frac{\epsilon}{2}<\epsilon,$$
and
$$\|b_{k+1}-x^*\| \leq \|a_{k+1}-b_{k+1}\| +\|a_{k+1}-x^*\|\leq 2\|a_{k+1}-x^*\|<\epsilon.$$
This proves  $a_{k+1}\in \ball(x^*,\epsilon)$ and $b_{k+1}\in \ball(x^*,\epsilon)$ {and therefore (\ref{ind1}) at $k$}.

2.2) Let us now prove that (\ref{ind2}) is true at $k$. Using the induction hypothesis 
{(\ref{ind1}) at $k-1$, that is,}
$b_{k-1}$, $a_k$, $b_k\in \ball(x^*,\epsilon)$, we apply part 1)  of the proof
to the building block $b_{k-1}\to a_k\to b_k$, which gives
\begin{eqnarray}
\label{Angle}
\frac{1-\cos\alpha_k}{\|a_k-b_k\|^\omega}\geq \gamma,
\end{eqnarray}
where $\alpha_k=\angle (b_{k-1}-a_k,b_k-a_k)$.
By part 2.1), which is already proved, we have 
$b_k$, $a_{k+1}$, $b_{k+1}\in \ball(x^*,\epsilon)$ and $\ball(x^*,\epsilon)\subset U$, 
so that  we can apply the four-point estimate of part 1)   to the building block
$b_k\to a_{k+1}\to b_{k+1}$.  This gives
\begin{eqnarray}
\label{First}
d_B(a_{k+1})^2 + \ell \|b_k-b_{k+1}\|^2 \leq d_B(a_k)^2.
\end{eqnarray}
Now  using the cosine theorem and  (\ref{Angle}) we obtain
\begin{align*}
\|b_{k-1}-b_k\|^2 &= \|b_{k-1}-a_k\|^2+\|a_k-b_k\|^2 -2\|b_{k-1}-a_k\|\|a_k-b_k\|\cos \alpha_k\\
&= \left( \|b_{k-1}-a_k\|-\|a_k-b_k\|\right)^2+2 \|b_{k-1}-a_k\|\|a_k-b_k\|(1-\cos \alpha_k)\\
&\geq  \left(\|b_{k-1}-a_k\|-\|a_k-b_k\|\right)^2+2 \gamma \|b_{k-1}-a_k\|\|a_k-b_k\|^{\omega+1}\\
&\geq 2 \gamma\|b_{k-1}-a_k\|\|a_k-b_k\|^{\omega+1}.
\end{align*}
Since $b_k\in P_B(a_k)$ and $b_{k-1}\in B$, we have $\|b_{k-1}-a_k\|\geq \|a_k-b_k\|=d_B(a_k)$.
Hence $\|b_{k-1}-b_k\|^2 \geq 2 \gamma d_B(a_k)^{\omega +2}$, or what is the same
\begin{eqnarray}
\label{Second}
\|a_k-b_k\|^{-{(\omega+2)}/{2}}\|b_{k-1}-b_k\| \geq \sqrt{2\gamma}.
\end{eqnarray}
Recalling that $\theta = {(\omega+2)}/{4}$ we have $\theta \in[\frac{1}{2},1)$, because 
$\omega \in [0,2)$. By concavity of the function $s\mapsto s^{1-\theta}/(1-\theta)$
we have $s_1^{1-\theta}-s_2^{1-\theta} \geq (1-\theta) s_1^{-\theta} (s_1-s_2)$. Applying this
to $s_1 = d_B(a_k)^2$ and $s_2 = d_B(a_{k+1})^2$, we obtain
\begin{align*}
d_B(a_k)^{2(1-\theta)}-d_B(a_{k+1})^{2(1-\theta)}& \geq (1-\theta) d_B(a_k)^{-2\theta} \left( d_B(a_k)^2-d_B(a_{k+1})^2  \right)  \\
&= (1-\theta)  \|a_k-b_k\|^{-2\theta} \left( \|a_k-b_k\|^2 - \|a_{k+1}-b_{k+1}\|^2 \right)\\
&\geq (1-\theta)  \ell \sqrt{2\gamma}\, \frac{\|b_k-b_{k+1}\|^2}{\|b_{k-1}-b_k\|}, 
\end{align*}
where the last estimate  uses (\ref{First}) and (\ref{Second}).  Multiplying by
$\|b_k-b_{k-1}\|$ and recalling that $C=1/((1-\theta)\ell \sqrt{2\gamma})$,  this gives
\[
C\,\left( d_B(a_k)^{2(1-\theta)} - d_B(a_{k+1})^{2(1-\theta)} \right) \|b_k-b_{k-1}\| \geq \|b_k-b_{k+1}\|^2.
\]
By comparison of the arithmetic and geometric mean, 
${a^2} \leq bc$ implies $a\leq \frac{1}{2}b + \frac{1}{2}c$ for positive $a,b,c$, hence we obtain
\begin{eqnarray}
\label{sum}
\|b_k-b_{k+1}\| \leq \frac{1}{2}\|b_k-b_{k-1}\| + \frac{C}{2} \left( d_B(a_k)^{2(1-\theta)} - d_B(a_{k+1})^{2(1-\theta)} \right).
\end{eqnarray}
By the induction hypothesis we have (\ref{ind2}) at $k-1$, that is, 
\[
\sum_{{j=1}}^{k-1} \|b_j-b_{j+1}\| \leq \frac{1}{2} \sum_{{j=1}}^{k-1} \|b_{j-1}-b_j\| + \frac{C}{2} \left( d_B({a_{1}})^{2(1-\theta)}
-d_B(a_k)^{2(1-\theta)}\right).
\]
Adding this and (\ref{sum})  gives (\ref{ind2}) at index $k$. 

2.3) Let us now prove that the hypotheses (\ref{ind1}) and (\ref{ind2}) hold at ${k=1}$. 
Concerning (\ref{ind1}),
since ${b_{1}}\in \ball(x^*,4\delta)$ and $\|{b_{1}-a_{2}}\|\leq 4\delta\leq \frac{\epsilon}{4}$,
we have ${a_{2}}\in \ball(x^*,\frac{\epsilon}{2})$. Then using $\|{a_{2}-b_{2}}\|\leq \frac{\epsilon}{2}$ gives
${b_{2}}\in \ball(x^*,\epsilon)$, so (\ref{ind1}) is true at ${k=1}$. 

Concerning the validity of  (\ref{ind2}) at ${k=1}$, 
observe that using ${b_{0},b_{1},a_{1}}\in \ball(x^*,\frac{\epsilon}{4})$ we  may   repeat the
argument in the induction step starting before formula (\ref{First}) with ${k=1}$
in the place of $k$. 
The conclusion is formula (\ref{sum}) at ${k=1}$, that is, 
\[
\|{b_{1}-b_{2}}\| \leq \frac{1}{2} \|{b_{1}-b_{0}}\| + \frac{C}{2}
\left( d_B({a_{1}})^{2(1-\theta))} - d_B({a_{2}})^{2(1-\theta)}   \right),
\] 
and this is precisely (\ref{ind2}) at ${k=1}$. This concludes the induction argument.

3)
Having proved (\ref{ind1}), (\ref{ind2}) for all indices ${k \geq 1}$, 
we see from  (\ref{etape})
that the series $\sum_{{j=1}}^\infty \|b_j-b_{j+1}\|$ converges, 
which means $b_k$ is a Cauchy sequence, which converges to a limit $b^*\in B \cap \ball(x^*,\epsilon)$. 
Using relation (\ref{Second}) we conclude that
$a_k$ converges to the same limit $b^*\in A \cap B$.
\hfill $\square$
\end{proof}

Our next result gives the convergence rate for $\omega \in (0,2)$. The case $\omega=0$, where linear convergence
is obtained,
will be treated separately in Theorem \ref{theorem2}.

\begin{corollary}
\label{corollary}
{\bf (Rate of convergence)}.
Under the hypotheses of Theorem {\rm \ref{theorem1}}, {with $\omega\in (0,2)$,} the convergence rates 
are $\|b_k - b^*\| 
=\mathcal O\left(k^{-\frac{2-\omega}{2\omega}}  \right)$
and $\|a_k-b^*\| = \mathcal O \left( k^{- \frac{2-\omega}{2\omega}} \right)$.
\end{corollary}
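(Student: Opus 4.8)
The plan is to follow the squared distances $s_k:=d_B(a_k)^2=\|a_k-b_k\|^2$ along the tail of the alternating sequence and to extract from the proof of Theorem \ref{theorem1} a self-improving one-step recursion. The four-point estimate (\ref{four}) gives $s_{k+1}+\ell\|b_k-b_{k+1}\|^2\le s_k$, while relation (\ref{Second}), applied at index $k+1$, gives $\|b_k-b_{k+1}\|\ge\sqrt{2\gamma}\,d_B(a_{k+1})^{(\omega+2)/2}=\sqrt{2\gamma}\,s_{k+1}^{(\omega+2)/4}$. Combining these, for all $k$ large enough that the relevant building blocks lie in $U$,
\[
s_k-s_{k+1}\ \ge\ 2\gamma\ell\,s_{k+1}^{p},\qquad p:=\tfrac{\omega+2}{2}\in(1,2),
\]
the last inclusion because $\omega\in(0,2)$.

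Next I would turn this into a decay rate for $s_k$. Since $(s_k)$ is nonincreasing and bounded (by its value $s_{k_0}$ once the sequence has entered $V$), a short two-case argument — distinguishing $s_{k+1}\ge\tfrac12 s_k$ from $s_{k+1}<\tfrac12 s_k$ and using $s_k^{1-p}\ge s_{k_0}^{1-p}$ — upgrades the recursion to $s_k-s_{k+1}\ge C_1 s_k^{p}$ with a constant $C_1>0$ depending only on $\gamma,\ell,s_{k_0}$. Then the classical comparison with the convex function $t\mapsto t^{1-p}$ gives $s_k^{1-p}-s_{k-1}^{1-p}\ge (p-1)C_1$, hence $s_k^{1-p}\ge (p-1)C_1(k-k_0)$ for large $k$, that is,
\[
s_k=\mathcal O\!\left(k^{-1/(p-1)}\right)=\mathcal O\!\left(k^{-2/\omega}\right),\qquad\text{equivalently}\qquad d_B(a_k)=\mathcal O\!\left(k^{-1/\omega}\right).
\]

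It remains to pass from $d_B(a_k)$ to the distances $\|b_k-b^*\|$ and $\|a_k-b^*\|$. Summing (\ref{sum}) from $j=k$ onwards and telescoping, exactly as in the proof of Theorem \ref{theorem1}, gives the tail bound
\[
\|b_k-b^*\|\ \le\ \sum_{j\ge k}\|b_j-b_{j+1}\|\ \le\ \|b_{k-1}-b_k\|+C\,d_B(a_k)^{2(1-\theta)},\qquad \theta=\tfrac{\omega+2}{4}.
\]
Here $2(1-\theta)=\tfrac{2-\omega}{2}$, so $d_B(a_k)^{2(1-\theta)}=s_k^{(2-\omega)/4}=\mathcal O\!\left(k^{-(2-\omega)/(2\omega)}\right)$, whereas $\|b_{k-1}-b_k\|^2\le \ell^{-1}(s_{k-1}-s_k)\le\ell^{-1}s_{k-1}$ forces $\|b_{k-1}-b_k\|=\mathcal O\!\left(k^{-1/\omega}\right)$, which is dominated since $1/\omega\ge (2-\omega)/(2\omega)$. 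Hence $\|b_k-b^*\|=\mathcal O\!\left(k^{-(2-\omega)/(2\omega)}\right)$, and finally $\|a_k-b^*\|\le d_B(a_k)+\|b_k-b^*\|=\mathcal O\!\left(k^{-(2-\omega)/(2\omega)}\right)$.

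The only genuinely nontrivial step, and the one I expect to be the main obstacle, is the passage from the one-step contraction $s_k-s_{k+1}\ge 2\gamma\ell\,s_{k+1}^{p}$ to the polynomial rate $s_k=\mathcal O(k^{-1/(p-1)})$; everything else is bookkeeping with inequalities already established in the proof of Theorem \ref{theorem1}. One must be slightly careful that the right-hand side of the recursion carries $s_{k+1}$ rather than $s_k$, which is precisely why the preliminary two-case reduction is needed, and one should record that all constants depend only on $\gamma$, on $c$ (through $\ell$) and on $s_{k_0}$, not on the particular alternating sequence.
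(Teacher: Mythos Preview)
Your proof is correct. It differs from the paper's in the intermediate quantity being tracked: the paper works directly with the tail sums $S_N=\sum_{j\ge N}\|b_j-b_{j+1}\|$, derives from (\ref{sum}) and (\ref{Second}) the recursion $S_N^{\theta/(1-\theta)}\le C''(S_{N-1}-S_N)$, and then uses a two-case split (indices with $2S_N\ge S_{N-1}$ versus $2S_N<S_{N-1}$) to telescope $S_N^{-(2\theta-1)/(1-\theta)}-S_{N-1}^{-(2\theta-1)/(1-\theta)}\ge\text{const}$, giving $S_M=\mathcal O(M^{-(1-\theta)/(2\theta-1)})$. You instead first extract the polynomial rate for $s_k=d_B(a_k)^2$ from the one-step recursion $s_k-s_{k+1}\ge 2\gamma\ell\,s_{k+1}^{p}$, and only afterwards feed this into the summed form of (\ref{sum}) to bound the tail. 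Both routes rely on the same three ingredients---the four-point estimate, (\ref{Second}), and (\ref{sum})---and on the same convexity/two-case mechanism to convert a nonlinear difference inequality into a telescoping one; your ordering is arguably a little more transparent, since the recursion on $s_k$ is read off almost immediately from (\ref{four}) and (\ref{Second}), whereas the paper has to pass through $\|b_{N-1}-b_N\|=S_{N-1}-S_N$ to close the loop on $S_N$. The paper's route, on the other hand, gives the bound on $\|b_k-b^*\|$ in one stroke without the separate endgame of bounding $\|b_{k-1}-b_k\|$.
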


\begin{proof}
Summing  (\ref{sum}) from $k=N$ to $k=M$
gives
\[
-\frac{1}{2} \|b_N-b_{N-1}\| + \frac{1}{2} \sum_{k=N}^{M-1} \|b_k-b_{k+1}\| + \|b_M-b_{M+1}\|
\leq \frac{C}{2} \left( d_B(a_N)^{2(1-\theta)} - d_B(a_{M+1})^{2(1-\theta)} \right).
\]
Passing to the limit $M\to\infty$ gives
\[
-\frac{1}{2} \|b_N-b_{N-1}\| + \frac{1}{2} \sum_{k=N}^\infty  \|b_k-b_{k+1}\| \leq  \frac{C}{2} d_B(a_N)^{2(1-\theta)}.
\]
Introducing $S_N=\sum_{k=N}^\infty \|b_k-b_{k+1}\|$, this becomes
\[
-\frac{1}{2} \left( S_{N-1}-S_N \right) + \frac{1}{2}S_N \leq \frac{C}{2} d_B(a_N)^{2(1-\theta)}.
\]
Consequently,
\[
\frac{1}{2}S_N \leq \frac{1}{2} (S_{N-1}-S_N) + {C}\,d_B(a_N)^{2(1-\theta)}.
\]
Now using estimate (\ref{Second}), we have
$d_B(a_N)^{2(1-\theta)} \leq (2\gamma)^{-\frac{1-\theta}{2\theta}} \|b_{N-1}-b_N\|^\frac{1-\theta}{\theta}$.
Putting $C' := C (2\gamma)^{-\frac{1-\theta}{2\theta}}$ and
substituting this gives
\begin{eqnarray}
\label{remarkable}
\frac{1}{2} S_N \leq \frac{1}{2}( S_{N-1}-S_N) + C' \left( S_{N-1}-S_N \right)^{\frac{1-\theta}{\theta}}.
\end{eqnarray}
Since $\theta \in (\frac{1}{2},1)$, we have $(1-\theta)/\theta \in (0,1)$. Moreover, $S_N\to 0$ implies $S_{N-1}-S_N\to 0$,
so the second term $(S_{N-1}-S_N)^{\frac{1-\theta}{\theta}}$ on the right of (\ref{remarkable}) dominates the first term $\frac{1}{2}(S_{N-1}-S_N)$.
That means, there exists another constant $C'' >0$ such that
\[
S_N^{\frac{\theta}{1-\theta}} \leq C'' (S_{N-1}-S_N)
\]
for all $N\in \mathbb N$.
We claim that there exists yet another constant $C'''$ such that
\begin{eqnarray}
\label{toshow}
1 \leq C'' (S_{N-1}-S_N) S_N^{-\frac{\theta}{1-\theta}}
\leq C''' \left(S_{N}^{-\frac{2\theta-1}{1-\theta}}-S_{N-1}^{-\frac{2\theta-1}{1-\theta}}  \right).
\end{eqnarray}
Assuming this proved, 
summation of (\ref{toshow}) from $N=1$ to $N=M$ gives
\[
M \leq C''' \left( S_{M}^{-\frac{2\theta-1}{1-\theta}}-S_1^{-\frac{2\theta-1}{1-\theta}}\right).
\]
Hence for yet two other constants $C'''',C'''''$,   $$S_M \leq C'''' \left[ S_1^{-\frac{2\theta-1}{1-\theta}} + M \right]^{-\frac{1-\theta}{2\theta-1}}
\leq C''''' M^{-\frac{1-\theta}{2\theta-1}}.$$ Since $\|b_M-b^*\| \leq S_M$ by the triangle inequality,
that proves the claimed speed of convergence.

In order to prove (\ref{toshow}) we divide the set of indices
into $\mathcal I = \{N: 2S_N \geq  S_{N-1}\}$ and $\mathcal J=\{N: 2S_N < S_{N-1}\}$.  For $N\in \mathcal I$ we have
\begin{align*}
(S_{N-1}-S_N) S_N^{-\frac{\theta}{1-\theta}}
&\leq 2^{\frac{\theta}{1-\theta}} (S_{N-1} - S_N) S_{N-1}^{-\frac{\theta}{1-\theta}}\\
&\leq 2^{\frac{\theta}{1-\theta}}
\int_{S_N}^{S_{N-1}} S^{-\frac{\theta}{1-\theta}}\, dS \\
&= 2^{\frac{\theta}{1-\theta}}
\frac{1-\theta}{2\theta-1} \left( S_N^{-\frac{2\theta-1}{1-\theta}} - S_{N-1}^{-\frac{2\theta-1}{1-\theta}}\right),
\end{align*}
proving (\ref{toshow}).  
In contrast, for $N\in \mathcal J$ we have
\[
S_N^{-\frac{2\theta-1}{1-\theta}} 
-S_{N-1}^{-\frac{2\theta-1}{1-\theta}} 
\geq 2^{\frac{2\theta-1}{1-\theta}} S_{N-1}^{-\frac{2\theta-1}{1-\theta}}-S_{N-1}^{-\frac{2\theta-1}{1-\theta}}
=\left(2^{\frac{2\theta-1}{1-\theta}}-1\right)S_{N-1}^{-\frac{2\theta-1}{1-\theta}} \to \infty
 \]
in view of $S_{N-1}\to 0$, $\frac{2\theta-1}{1-\theta}>0$ and $2^{\frac{2\theta-1}{1-\theta}}-1 >0$.  So on the  set
$\mathcal J$ estimate (\ref{toshow})
is also satisfied. Finally, the same estimate for $a_k$ follows from
$\|a_{k+1}-b^*\| \leq \|a_{k+1}-b_k\| + \|b_k-b^*\| \leq 2 \|b_k-b^*\|$.
\hfill $\square$
\end{proof}

\begin{theorem}
\label{linear}
{\bf (Local convergence with linear rate)}.
Let
$A,B$ intersect $0$-separably at $x^*$ with 
constant $\gamma\in (0,2)$. Suppose  $B$ is $0$-H\"older regular at $x^*$ with respect to $A$
with constant $c < \frac{\gamma}{2}$. Then there exists a neighborhood $V$
of $x^*$ such that every sequence of alternating projections
that enters $V$ converges  R-linearly to a point $b^*\in A\cap B$. 
\end{theorem}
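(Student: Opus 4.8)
The plan is to specialise the machinery of Section \ref{convergence} to $\omega=0$ and then sharpen the distance estimate into a genuine geometric contraction. First I would invoke Theorem \ref{theorem1} with $\omega=0$: its hypotheses are exactly the ones assumed here, since $A,B$ intersecting $0$-separably at $x^*$ with constant $\gamma$ entails in particular that $B$ intersects $A$ $0$-separably, and $B$ is $0$-H\"older regular at $x^*$ with respect to $A$ with $c<\gamma/2$. This produces a neighborhood $V=\ball(x^*,\delta)$ such that every alternating sequence entering $V$ converges to some $b^*\in A\cap B$; moreover, after relabeling so that $b_0\in V$, the containment induction (\ref{ind1}) in the proof of Theorem \ref{theorem1} guarantees that all iterates $a_k,b_k$ lie in $\ball(x^*,\epsilon)\subset U$, where $U$ is the neighborhood on which the angle condition $1-\cos\alpha\geq\gamma$ and the three-point estimate (Lemma \ref{third}) hold. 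Fix the resulting $\ell\in(0,1)$ and the four-point estimate (\ref{four}), $d_B(a^+)^2+\ell\|b-b^+\|^2\leq d_B(a)^2$. It then only remains to upgrade convergence to an R-linear rate, which I would carry out on this tail.

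The core step is a one-step Q-linear contraction for the distances $d_B(a_k)=\|a_k-b_k\|$. Applying the cosine theorem in the triangle $b_{k-1},a_k,b_k$ and using $1-\cos\alpha_k\geq\gamma$ for $\alpha_k=\angle(b_{k-1}-a_k,b_k-a_k)$ — exactly as in the derivation of (\ref{Second}), now with $\omega=0$ — gives
\[
\|b_{k-1}-b_k\|^2\;\geq\; 2\gamma\,\|b_{k-1}-a_k\|\,\|a_k-b_k\|\;\geq\; 2\gamma\,d_B(a_k)^2,
\]
the last inequality because $b_{k-1}\in B$ forces $\|b_{k-1}-a_k\|\geq d_B(a_k)=\|a_k-b_k\|$. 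Combining this with the four-point estimate applied to the building block $a_{k-1}\to b_{k-1}\to a_k\to b_k$, namely $d_B(a_k)^2+\ell\|b_{k-1}-b_k\|^2\leq d_B(a_{k-1})^2$, yields $(1+2\gamma\ell)\,d_B(a_k)^2\leq d_B(a_{k-1})^2$, i.e.
\[
d_B(a_k)\leq\mu\,d_B(a_{k-1}),\qquad \mu:=(1+2\gamma\ell)^{-1/2}\in(0,1).
\]
Finally I would convert this into R-linear convergence of both sequences. From the four-point estimate, $\|b_{k-1}-b_k\|^2\leq\ell^{-1}\bigl(d_B(a_{k-1})^2-d_B(a_k)^2\bigr)\leq\ell^{-1}d_B(a_{k-1})^2$, so by the Q-linear decay of $d_B(a_k)$ we get $\|b_{k-1}-b_k\|=\mathcal O(\mu^k)$, hence $\|b_k-b^*\|\leq\sum_{j\geq k}\|b_j-b_{j+1}\|=\mathcal O(\mu^k)$, which is R-linear convergence of $b_k$; and $\|a_k-b^*\|\leq\|a_k-b_k\|+\|b_k-b^*\|=d_B(a_k)+\mathcal O(\mu^k)=\mathcal O(\mu^k)$ gives the same rate for $a_k$, with $b^*\in A\cap B$ as its limit.

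There is no real obstacle in this argument beyond the $\omega=0$ bookkeeping plus the contraction above; the assumption $\gamma\in(0,2)$ is merely the natural range, since $1-\cos\alpha\leq 2$ makes $0$-separability with $\gamma\geq 2$ vacuous. The only delicate point — ensuring that each iterate of the tail actually lies in the neighborhood $U$ where the angle condition and Lemma \ref{third} apply — is already handled by the containment part of Theorem \ref{theorem1}, which I would simply quote rather than reprove.
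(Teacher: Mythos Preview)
Your argument is correct. Both you and the paper start by invoking Theorem \ref{theorem1} with $\omega=0$ to secure convergence and the containment $b_k,a_{k+1},b_{k+1}\in\ball(x^*,\epsilon)\subset U$, so that Lemma \ref{third} and the four-point estimate (\ref{four}) apply to every building block of the tail. The difference is only in how the linear rate is extracted. The paper reuses the machinery of Corollary \ref{corollary}: it goes through the concavity step, the estimate (\ref{sum}), and the tail sums $S_N=\sum_{j\geq N}\|b_j-b_{j+1}\|$, observing that for $\theta=\tfrac12$ the recursion (\ref{remarkable}) collapses to $S_N\leq\frac{1+2C'}{2+2C'}\,S_{N-1}$, i.e.\ Q-linear decay of $S_N$ and hence R-linear convergence of $b_N$. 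You instead combine the four-point estimate $d_B(a_k)^2+\ell\|b_{k-1}-b_k\|^2\leq d_B(a_{k-1})^2$ directly with the $\omega=0$ specialisation $\|b_{k-1}-b_k\|^2\geq 2\gamma\,d_B(a_k)^2$ of (\ref{Second}) to obtain a one-step contraction $(1+2\gamma\ell)\,d_B(a_k)^2\leq d_B(a_{k-1})^2$, and then read off R-linear convergence of $b_k$ and $a_k$ from the geometric decay of $d_B(a_k)$. Your route is more elementary in this special case---it bypasses the concavity trick, (\ref{sum}), and the $S_N$ bookkeeping altogether---at the cost of being specific to $\omega=0$; the paper's route has the virtue of being a uniform specialisation of the general-$\omega$ argument already in place.
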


\begin{proof}
Applying Lemma \ref{third} and  Theorem \ref{theorem1}
in the case $\omega=0$, we obtain convergence of the sequence
$a_k,b_k$ to a point $b^*\in A \cap B$ from summability of $\sum_k \|b_{k-1}-b_k\|$. 
Now from the proof of Corollary \ref{corollary}, we see that in the case $\theta=\frac{1}{2}$ 
equation (\ref{remarkable}) simplifies to
\[
\frac{1}{2} S_N\leq \frac{1}{2}(S_{N-1}-S_N) + C' (S_{N-1}-S_N),
\]
or what is the same
\[
S_N \leq \frac{1+2C'}{2+2C'}\,S_{N-1}.
\]
This proves Q-linear convergence of $S_N$ to $0$, hence R-linear convergence of $b_N\to b^*$.
\hfill $\square$
\end{proof}

\begin{remark}
Theorem \ref{linear} extends the results in \cite[Thm. 5.2]{luke}
and \cite[Thm. 3.14]{bauschke1} in two ways. 
Firstly, as seen in example \ref{packman}, $0$-H\"older regularity includes sets $B$ which have  
singularities pointing inwards, where superregularity  \cite{luke}    and
its extension in \cite{bauschke1} fail.  Secondly, $0$-separability is weaker than
linear regularity or the CQ in \cite{bauschke1}, see example \ref{spiral2}.
\end{remark}

We now obtain the following consequence of Theorem \ref{theorem1}, originally
proved in \cite{bauschke2} for more general families of sets. When specialized to the case of two sets
we have

\begin{corollary}
{\bf (Bauschke et al. \cite[{Theorem 3.14}]{bauschke2})}.  {Suppose $(A,\tilde{A},B,\tilde{B})$ satisfies the {\rm CQ}-condition {\rm (\ref{CQ:condition})}
at $x^*\in A \cap B$, where $P_A(\partial B\setminus A)\subset \tilde{A}$, $P_B(\partial A\setminus B)\subset\tilde{B}$}.
Moreover, suppose for every $\epsilon > 0$ there exists $\delta > 0$ such that
$B$ is $(A,\epsilon,\delta)$ regular at $x^*$. Then there exists a neighborhood $V$
of $x^*$ such that every alternating sequence which enters $V$
converges R-linearly {to a point in $A\cap B$}. \hfill $\square$
\end{corollary}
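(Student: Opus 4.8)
The plan is to obtain the statement by chaining together Proposition \ref{prop1}, Proposition \ref{regular} and Theorem \ref{linear}, since between them these already carry all the analytic content; the only thing that needs care is the order in which the constants are fixed.

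First I would invoke Proposition \ref{prop1}. Its hypotheses are precisely the CQ-condition (\ref{CQ:condition}) at $x^*$ together with $P_A(\partial B\setminus A)\subset\widetilde A$ and $P_B(\partial A\setminus B)\subset\widetilde B$, which are exactly what the corollary assumes. Hence $A,B$ intersect $0$-separably at $x^*$ with some constant $\gamma=1-\theta_{\delta_0}\in(0,1)$ on a ball $U_0=\ball(x^*,\delta_0)$. In particular $\gamma\in(0,2)$, which is the range Theorem \ref{linear} requires.

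Next, with $\gamma$ now pinned down, I would choose $\epsilon>0$ small enough that $\epsilon^2<\gamma/2$. By the standing hypothesis there is a $\delta_1>0$ with $B$ being $(A,\epsilon,\delta_1)$-regular at $x^*$, and Proposition \ref{regular} then yields that $B$ is $0$-H\"older regular at $x^*$ with respect to $A$ with constant $c=\epsilon^2<\gamma/2$, valid on $\ball(x^*,\delta_1/(4(1+\epsilon^2)))$. Since $0$-separability with constant $\gamma$ and $0$-H\"older regularity with constant $c$ both persist on any sub-neighborhood, restricting to $U:=\ball(x^*,\rho)$ with $\rho=\min\{\delta_0,\ \delta_1/(4(1+\epsilon^2))\}$ makes both conditions hold simultaneously on the single neighborhood $U$, as demanded by Lemma \ref{third} and Theorem \ref{theorem1}.

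Finally I would apply Theorem \ref{linear} with this $U$: it furnishes a neighborhood $V\subset U$ of $x^*$ such that every sequence of alternating projections entering $V$ converges R-linearly to some $b^*\in A\cap B$, which is the claim. The only genuine point of care is that $\epsilon$ must be chosen \emph{after} $\gamma$ is known: the CQ-condition controls $\gamma$ but says nothing about $c$, so one has to exploit the ``arbitrarily small constant'' clause of Proposition \ref{regular} (equivalently, that $(A,\epsilon,\delta)$-regularity is assumed for \emph{every} $\epsilon>0$) in order to force $c<\gamma/2$. I do not expect any further obstacle.
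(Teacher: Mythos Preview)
Your proposal is correct and matches the paper's intended argument: the corollary is stated without proof, merely as a consequence of the main convergence theorem, and your chaining of Proposition~\ref{prop1}, Proposition~\ref{regular}, and Theorem~\ref{linear} is exactly the derivation the authors have in mind. Your care in fixing $\gamma$ first and only then choosing $\epsilon$ so that $c=\epsilon^2<\gamma/2$ is the one genuine point to watch, and you handle it correctly.
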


As already shown in \cite{bauschke2} one readily derives

\begin{corollary}
{\bf (Lewis, Luke, Malick \cite{luke})}. Suppose $A,B$ intersect linearly regularly 
and $B$ is superregular. Then alternating projections 
converge locally R-linearly to a point in the intersection. \hfill $\square$
\end{corollary}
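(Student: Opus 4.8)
The plan is to obtain this corollary purely by specialising Theorem \ref{linear}: the transversality hypothesis will feed in through Corollary \ref{cor1}, and the regularity hypothesis through Proposition \ref{regular}. So I would fix a point $x^*\in A\cap B$ at which $A,B$ intersect linearly regularly and at which $B$ is superregular (these being the standing, point-local hypotheses of the statement), and aim to verify the two input conditions of Theorem \ref{linear} at that $x^*$.

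First I would invoke Corollary \ref{cor1}: linearly regular intersection at $x^*$ implies that $A,B$ intersect $0$-separably at $x^*$, say with some constant $\gamma>0$. Since lowering the constant only weakens the angle condition (\ref{angle}) (cf.\ Remark \ref{incr}), I may freely replace $\gamma$ by $\min\{\gamma,1\}$ and thereby assume $\gamma\in(0,2)$, the range required by Theorem \ref{linear}. Next I would invoke Proposition \ref{regular}: superregularity of $B$ at $x^*$ gives that $B$ is $0$-H\"older regular with respect to $A$ at $x^*$ with a constant that may be chosen arbitrarily small, so I pick it to be some $c<\gamma/2$, matching the constraint $c<\tfrac{\gamma}{2}$ of Theorem \ref{linear}. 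With exponent $\omega=0$, separability constant $\gamma\in(0,2)$, and H\"older constant $c<\gamma/2$, the hypotheses of Theorem \ref{linear} now hold verbatim at $x^*$, and the theorem yields a neighborhood $V$ of $x^*$ such that every sequence of alternating projections entering $V$ converges R-linearly to a limit $b^*\in A\cap B$ — which is precisely the asserted local R-linear convergence.

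I do not expect a genuine obstacle here; the only point requiring a word of care is the bookkeeping of constants, namely that the $0$-separability constant coming out of Corollary \ref{cor1} can be pushed into $(0,2)$ and that the freely small H\"older constant of Proposition \ref{regular} can be taken below $\gamma/2$, both of which are immediate from the monotonicity statements already in the excerpt. As an alternative route one may instead derive the result from the preceding Corollary of Bauschke et al.: linear regularity forces the CQ-condition for any choice of $\widetilde A,\widetilde B$ (as noted just before Corollary \ref{cor1}), while superregularity of $B$ supplies, for every $\epsilon>0$, a $\delta>0$ with $B$ being $(A,\epsilon,\delta)$-regular at $x^*$, so that corollary applies directly; this is the derivation alluded to in \cite{bauschke2}.
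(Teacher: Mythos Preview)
Your proposal is correct. The paper itself gives no explicit proof of this corollary, merely writing ``As already shown in \cite{bauschke2} one readily derives'' after the Bauschke et al.\ corollary; your primary route (Corollary~\ref{cor1} plus Proposition~\ref{regular} feeding directly into Theorem~\ref{linear}) and your alternative route via the Bauschke et al.\ corollary are both valid instantiations of this derivation, and the latter is exactly what the paper's text points to.
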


The following is now a consequence of Theorem \ref{linear}, using Propositions \ref{hoax} and \ref{kill}.

\begin{corollary}
{\bf (Drusvyatskiy, Ioffe, Lewis {\rm \cite{obsolete}})}.
Suppose $A,B$ intersect intrinsically transversally at $x^*$. Then there exists a neighborhood
$U$ of $x^*$ such that every sequence of alternating projections
entering $U$ converges R-linearly to a point in the intersection.
\end{corollary}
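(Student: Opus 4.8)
The plan is to derive the statement purely by combining results already established: Proposition~\ref{hoax} and Proposition~\ref{kill}, which translate intrinsic transversality into $0$-separability and $0$-H\"older regularity respectively, and Theorem~\ref{linear}, which converts that pair of hypotheses into local R-linear convergence of alternating projections. No new geometric estimate is needed; the only point demanding slight care is a compatibility check between the constants produced by the two propositions.

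First I would invoke Proposition~\ref{hoax}: if $A,B$ are intrinsically transversal at $x^*$ with constant $\kappa\in(0,1]$, then $A,B$ intersect $0$-separably at $x^*$ with constant $\gamma=\kappa^2/2$. Since $\kappa\le 1$ we have $\gamma=\kappa^2/2\in(0,\tfrac12]\subset(0,2)$, so $\gamma$ lies in the range of separability constants allowed by Theorem~\ref{linear}. Next I would invoke Proposition~\ref{kill}: intrinsic transversality at $x^*$ also implies that $B$ is $0$-H\"older regular at $x^*$ with respect to $A$, where the constant $c>0$ there may be taken to be any value with $c<\kappa^2/(1-\kappa^2)$ (read as $c<\infty$ when $\kappa=1$).

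To apply Theorem~\ref{linear} I additionally need $c<\gamma/2=\kappa^2/4$. Both constraints can be met simultaneously, because $\kappa^2/(1-\kappa^2)\ge\kappa^2>\kappa^2/4$; so I fix, say, $c=\kappa^2/8$, which satisfies $0<c<\gamma/2$ and $c<\kappa^2/(1-\kappa^2)$. With this choice of constants, $A,B$ intersect $0$-separably at $x^*$ with constant $\gamma\in(0,2)$ and $B$ is $0$-H\"older regular at $x^*$ with respect to $A$ with constant $c<\gamma/2$, which are precisely the hypotheses of Theorem~\ref{linear}. That theorem then supplies a neighborhood $V$ of $x^*$ such that every sequence of alternating projections entering $V$ converges R-linearly to a point $b^*\in A\cap B$; taking $U=V$ finishes the argument.

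The only obstacle — and it is a mild one — is the bookkeeping just described: one must ensure the H\"older constant delivered by Proposition~\ref{kill} can be pushed below $\gamma/2$, with $\gamma$ the separability constant coming out of Proposition~\ref{hoax}, and that the neighborhoods implicit in the three cited results can be replaced by a common one. The latter is automatic, since $0$-separability and $0$-H\"older regularity at $x^*$ are formulated as the existence of \emph{some} neighborhood, and Theorem~\ref{linear} then produces its own, possibly smaller, neighborhood $V$.
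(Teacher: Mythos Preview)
Your proof is correct and follows essentially the same route as the paper's own argument: invoke Proposition~\ref{hoax} for $0$-separability with $\gamma=\kappa^2/2$, Proposition~\ref{kill} for $0$-H\"older regularity with any $c<\kappa^2/(1-\kappa^2)$, check that one can choose $c<\gamma/2$, and then apply Theorem~\ref{linear}. Your added verification that $\kappa^2/(1-\kappa^2)>\kappa^2/4$ and your remark on the compatibility of neighborhoods are welcome pieces of bookkeeping that the paper leaves implicit.
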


\begin{proof}
By Proposition \ref{hoax} the sets
$A,B$ intersect $0$-separably at $x^*$ with constant $\gamma=\kappa^2/2\in (0,1]$.
By Proposition \ref{kill},   $B$ is $0$-H\"older regular
with respect to $A$ at $x^*$ with any constant $c < \kappa^2/(1-\kappa^2)=\gamma/(2-\gamma)$. 
Choosing  $c<\gamma/2$ allows us therefore to apply Theorem \ref{linear}.
\hfill $\square$
\end{proof}

\begin{remark}
Drusvyatskiy {\em et al.}  \cite{obsolete} stress that their approach gives local R-linear convergence
under a   transversality hypothesis alone, while the older \cite{luke,bauschke2,initial} still need 
regularity assumptions. However, this statement should be read with care, 
because Propositions \ref{hoax} and \ref{kill}  show that
intrinsic transversality amalgamates transversality and regularity aspects. In particular,
it is more restrictive than $0$-H\"older regularity in tandem with $0$-separability, 
so that Theorem \ref{linear}
is stronger than the main result in \cite{obsolete}. 
\end{remark}

\section{Subanalytic sets}
\label{loja}
Following \cite{bierstone}, a subset $A$ of $\mathbb R^n$ is called {\em semianalytic}
if for every $x\in \mathbb R^n$ there exists an open neighborhood $V$ of $x$ such that
\begin{eqnarray}
\label{semi}
A \cap V = \bigcup_{i\in I} \bigcap_{j\in J} \{x\in V: \phi_{ij}(x)=0, \psi_{ij}(x) > 0\}
\end{eqnarray}
for finite sets $I,J$ and real-analytic functions $\phi_{ij},\psi_{ij} :V \to \mathbb R$. The set $B$ in $\mathbb R^n$
is called {\em subanalytic} if for every $x\in \mathbb R^n$ there exist a neighborhood $V$
of $x$ and a bounded semianalytic subset $A$ of some $\mathbb R^n \times \mathbb R^m$,
$m\geq 1$, such that $B\cap V =\{x\in \mathbb R^n: \exists y\in \mathbb R^m \mbox{ such that }(x,y)\in A\}$. Finally,
an extended real-valued function $f:\mathbb R^n \to \mathbb R \cup\{\infty\}$ is called subanalytic if its
graph is a subanalytic subset of $\mathbb R^n\times \mathbb R$.

We consider the function $f:\mathbb R^n \to \mathbb R \cup \{\infty\}$
defined as $f(x) = i_A(x) + \frac{1}{2} d_B(x)^2$, where $i_A$ is the indicator
function of $A$; cf. \cite[p. 84]{mord}. 

\begin{lemma}
\label{first}
Let $f = i_A + \frac{1}{2}d_B^2$.
Let $a^+\in A$ be projected from $b\in B$ and $v= \lambda(b-a^+)\in {N^p_A(a^+)}$, where
$\lambda \geq 0$.
Then $v + a^+ - P_B(a^+) \subset \widehat{\partial} f(a^+)$.
\end{lemma}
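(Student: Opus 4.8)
The claim is about the Fréchet subdifferential of $f = i_A + \tfrac12 d_B^2$ at $a^+$. The plan is to verify membership directly from the definition of $\widehat\partial f(a^+)$, i.e. to show that each $w = v + a^+ - p$ with $p \in P_B(a^+)$ satisfies the Fréchet inequality
\[
\limsup_{a' \to a^+} \frac{f(a') - f(a^+) - \langle w, a' - a^+\rangle}{\|a' - a^+\|} \leq 0 .
\]
Since $f(a') = +\infty$ off $A$, only $a' \in A$ matters, so I would restrict attention to $a' \in A$, $a' \to a^+$, and use $f(a') - f(a^+) = \tfrac12 d_B(a')^2 - \tfrac12 d_B(a^+)^2$.

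**Key steps.** First I would split $w = v + (a^+ - p)$ and treat the two pieces separately, using that the $\limsup$ of a sum is at most the sum of $\limsup$'s. For the piece $v = \lambda(b - a^+) \in N_A^p(a^+)$: since $v$ is a proximal normal to $A$ at $a^+$, there is $\sigma > 0$ with $\langle v, a' - a^+\rangle \leq \sigma \|a' - a^+\|^2$ for all $a' \in A$ near $a^+$ (this is exactly the proximal-normal inequality, recalled implicitly via $v \in N_A^p(a^+)$ and $a^+ \in P_A(a^+ + u)$); hence $-\langle v, a' - a^+\rangle / \|a'-a^+\| \geq -\sigma\|a'-a^+\| \to 0$, contributing a nonpositive $\limsup$. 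For the piece $a^+ - p$ with $p \in P_B(a^+)$: the function $\tfrac12 d_B^2$ is not convex, but it is the infimum of the smooth functions $a' \mapsto \tfrac12 \|a' - q\|^2$ over $q \in B$, so $\tfrac12 d_B(a')^2 \leq \tfrac12 \|a' - p\|^2$, while $\tfrac12 d_B(a^+)^2 = \tfrac12 \|a^+ - p\|^2$ since $p \in P_B(a^+)$. Expanding $\tfrac12\|a'-p\|^2 - \tfrac12\|a^+-p\|^2 = \langle a^+ - p, a' - a^+\rangle + \tfrac12\|a' - a^+\|^2$, I get
\[
f(a') - f(a^+) - \langle a^+ - p, a'-a^+\rangle \leq \tfrac12\|a'-a^+\|^2 ,
\]
so after dividing by $\|a'-a^+\|$ and letting $a' \to a^+$ the $\limsup$ is $\leq 0$. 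Combining the two pieces gives $w \in \widehat\partial f(a^+)$.

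**Alternative packaging.** Equivalently, one can argue via normal cones to the epigraph: $v \in N_A^p(a^+) = N_A^p(a^+) \subset \widehat N_A(a^+)$ and $(a^+ - p) \in \widehat\partial(\tfrac12 d_B^2)(a^+)$ by the computation above, and then invoke the sum rule $\widehat\partial(i_A + g)(a^+) \supset \widehat N_A(a^+) + \widehat\partial g(a^+)$, which holds as an inclusion without qualification. Either route is routine once the two elementary estimates are in place; I would prefer the direct $\limsup$ computation since it avoids quoting a sum rule and keeps the argument self-contained.

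**Main obstacle.** There is no serious obstacle — the only point requiring a little care is getting the proximal-normal inequality for $v$ with an honest constant $\sigma$ (uniform on a neighborhood is not even needed; a single $\sigma$ at $a^+$ suffices for the $\limsup$), and making sure the restriction to $a' \in A$ is justified because $f \equiv +\infty$ off $A$. The case $\lambda = 0$ (so $v = 0$) is covered automatically. The inclusion is one-directional, so no sharpness argument is required.
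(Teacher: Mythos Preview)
Your direct approach contains a genuine gap. The defining inequality for the Fr\'echet subdifferential has the wrong sign: $w \in \widehat\partial f(a^+)$ means
\[
\liminf_{a'\to a^+}\frac{f(a')-f(a^+)-\langle w,a'-a^+\rangle}{\|a'-a^+\|}\ge 0,
\]
not $\limsup(\cdots)\le 0$; the latter characterizes the Fr\'echet \emph{super}differential. This is not a harmless slip, because your two pieces then pull in opposite directions. The proximal-normal estimate $\langle v,a'-a^+\rangle\le \sigma\|a'-a^+\|^2$ for $a'\in A$ is exactly the lower bound needed for the correct $\liminf\ge 0$; it does \emph{not} yield ``a nonpositive $\limsup$'' of $-\langle v,a'-a^+\rangle/\|a'-a^+\|$, which can be arbitrarily large and positive. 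Conversely, your treatment of the $d_B^2$ piece uses $\tfrac12 d_B(a')^2\le \tfrac12\|a'-p\|^2$, which gives only an \emph{upper} bound on the difference quotient and therefore cannot deliver $\liminf\ge 0$. Once the definition is corrected, the $v$-part survives but the $d_B^2$-part does not, and there is no elementary repair along these lines: as an infimum of quadratics, $\tfrac12 d_B^2$ is semiconcave, and wherever $P_B$ is multi-valued the vector $a^+-p$ is a Fr\'echet supergradient of $\tfrac12 d_B^2$ rather than a subgradient.

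Your ``alternative packaging'' is exactly the paper's proof: quote $a^+-P_B(a^+)\subset\widehat\partial\bigl(\tfrac12 d_B^2\bigr)(a^+)$, note $v\in N_A^p(a^+)\subset\widehat N_A(a^+)=\widehat\partial i_A(a^+)$, and apply the Fr\'echet sum rule $\widehat\partial i_A(a^+)+\widehat\partial\bigl(\tfrac12 d_B^2\bigr)(a^+)\subset\widehat\partial f(a^+)$. So if you accept that first inclusion as a quotable fact (it is certainly valid whenever $P_B(a^+)$ is single-valued, since $\tfrac12 d_B^2$ is then differentiable at $a^+$ with gradient $a^+-P_B(a^+)$), you recover the paper's argument verbatim; but your attempted self-contained computation does not establish it.
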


\begin{proof}
Note that $a^+- P_B(a^+)\subset \widehat{\partial} \left(\frac{1}{2} d_B^2\right)(a^+)$ by \cite[1.3.3]{mord} or \cite[Example 8.53]{rock}. 
Next observe that $\widehat{\partial} i_A(a^+) = \widehat{N}_A(a^+)$ by \cite[Proposition 1.79]{mord} or  \cite[Exercice 8.14]{rock}. Hence $v \in N_A^p(a^+) \subset \widehat{N}_A(a^+) = \widehat{\partial}i_A(a^+)$  using \cite[Lemma 2.4]{bauschke1}. Finally,
by the sum rule \cite[Prop. 1.12]{schirotzek}
we have $\widehat\partial i_A(a^+)+\widehat\partial  \frac{1}{2}d_B^2(a^+) \subset \widehat\partial f(a^+)$, 
which completes the proof.
\hfill $\square$
 \end{proof}

\begin{definition}
\label{def_loja}
Let $f:\mathbb R^n \to \mathbb R \cup\{\infty\}$ be lower semi-continuous with closed domain
such that $f|_{{\rm dom} f}$ is continuous. We say that $f$ satisfies the \L ojasiewicz inequality
with exponent $\theta \in [0,1)$ at the critical point
$x^*$ of $f$ if there exists $\gamma > 0$ and a neighborhood $U$ of $x^*$ such that
$|f(x)-f(x^*)|^{-\theta} \|g\| \geq \gamma$ for every $x\in U$ and every $g\in \widehat{\partial} f(x)$.
\hfill $\square$
\end{definition}

Here $x^*$ is critical if $0\in \partial f(x^*)$, see \cite{mord,rock}. 

\begin{lemma}
\label{second}
Suppose $f = i_A+\frac{1}{2}d_B^2$ satisfies the \L ojasiewicz
inequality with exponent $\theta \in [0,1)$ at $a^*=b^*\in A \cap B$ and constant $\gamma >0$. Then in fact $\theta > \frac{1}{2}$. 
Moreover,  \textcolor{black}{$B$ intersects $A$ separably}  with exponent
$\omega =4\theta -2\in (0,2)$ and constant $\gamma'=2^{-2\theta-1}\gamma^2$.
\end{lemma}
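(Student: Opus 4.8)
The plan is to unwind the Łojasiewicz inequality for $f = i_A + \tfrac12 d_B^2$ into the angle condition of Definition~\ref{separable}. First I would fix a building block $b\to a^+\to b^+$ in a neighborhood $U$ of $x^*$, so that $b\in B$, $a^+\in P_A(b)$, $b^+\in P_B(a^+)$, and set $r=\|a^+-b^+\|=d_B(a^+)$ and $s=\|b-a^+\|$. Note $f(a^+)=\tfrac12 r^2$ and $f(x^*)=f(a^*)=0$ since $a^*=b^*\in A\cap B$, so $|f(a^+)-f(x^*)|=\tfrac12 r^2$. The element of $\widehat\partial f(a^+)$ supplied by Lemma~\ref{first} is $g = v + a^+ - b^+$ with $v=b-a^+\in N^p_A(a^+)$ (taking $\lambda=1$); thus $g=(b-a^+)+(a^+-b^+)=b-b^+$. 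Wait — more carefully, $g = v + (a^+ - P_B(a^+))$, and choosing the representative $P_B(a^+)=b^+$ gives $g = (b-a^+)+(a^+-b^+)$. The Łojasiewicz inequality then reads $\bigl(\tfrac12 r^2\bigr)^{-\theta}\|g\|\ge\gamma$, i.e. $\|g\|\ge \gamma\,2^{-\theta} r^{2\theta}$.

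The key computation is to bound $\|g\|^2$ from above in terms of $r$, $s$, and the inner product $\langle b-a^+, b^+-a^+\rangle$ that appears in~(\ref{angle}). Writing $g=(b-a^+)-(b^+-a^+)$, expand
\[
\|g\|^2 = \|b-a^+\|^2 + \|b^+-a^+\|^2 - 2\langle b-a^+,b^+-a^+\rangle = s^2 + r^2 - 2\langle b-a^+,b^+-a^+\rangle.
\]
From the Łojasiewicz lower bound $\|g\|^2 \ge \gamma^2 2^{-2\theta} r^{4\theta}$ we obtain
\[
\langle b-a^+,b^+-a^+\rangle \le \tfrac12\bigl(s^2+r^2\bigr) - \tfrac12\gamma^2 2^{-2\theta} r^{4\theta}.
\]
Since $b^+\in P_B(a^+)$ and $b\in B$ we have $s=\|b-a^+\|\ge d_B(a^+)=r$, and also $\tfrac12(s^2+r^2)\le s\cdot s$... this is not yet quite what I want; the cleaner route is to compare with $sr$. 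Indeed $\tfrac12(s^2+r^2)= sr + \tfrac12(s-r)^2$, and I need to absorb the $\tfrac12(s-r)^2$ term. An alternative, and I think the intended, manipulation: use $\langle b-a^+,b^+-a^+\rangle = sr\cos\alpha$ with $\alpha=\angle(b-a^+,b^+-a^+)$, and $\|g\|^2 = s^2+r^2-2sr\cos\alpha\ge (s-r)^2 + 2sr(1-\cos\alpha)\ge 2sr(1-\cos\alpha)$ is the wrong direction; instead from $\|g\|^2\ge \gamma^2 2^{-2\theta}r^{4\theta}$ and $\|g\|^2 = (s-r)^2 + 2sr(1-\cos\alpha)$ we cannot directly isolate $1-\cos\alpha$. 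So the right move is: $s^2+r^2-2sr\cos\alpha \ge \gamma^2 2^{-2\theta} r^{4\theta}$ gives $2sr(1-\cos\alpha) \ge \gamma^2 2^{-2\theta}r^{4\theta} - (s-r)^2$, which is useless when $s\gg r$. Hence one must instead bound $\|g\|$ using the \emph{acute} geometry: drop the $(s-r)^2\ge 0$ differently — actually the correct identity to exploit is $\|g\|^2\le (s^2+r^2) - 2sr\cos\alpha$ is an equality, so rearranging the Łojasiewicz bound directly yields $sr(1-\cos\alpha)\ge \tfrac12\gamma^2 2^{-2\theta}r^{4\theta} - \tfrac12(s-r)^2$; the cleanest fix is to note $\|g\|\le \|b-b^+\|$ and relate $\|b-b^+\|$ to $r$ via $b^+\in P_B(a^+)$, but the stated constants $\omega=4\theta-2$, $\gamma'=2^{-2\theta-1}\gamma^2$ strongly suggest the bookkeeping is: $1-\cos\alpha \ge \tfrac{\gamma^2 2^{-2\theta}r^{4\theta}}{2sr}\ge \tfrac{\gamma^2 2^{-2\theta}r^{4\theta-2}}{2}$ using $s\le$ (a bound comparable to $r$) — which holds because on a small enough $U$ one has $s=\|b-a^+\|$ comparable to $r$; more honestly, $s\le \|b-b^+\|+r$ and a separate estimate controls $\|b-b^+\|$. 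I would therefore structure the argument so that the factor $sr$ in the denominator is bounded by $Cr^2$, giving $1-\cos\alpha\ge \tfrac12\gamma^2 2^{-2\theta} r^{4\theta-2}/C = \gamma' \|b^+-a^+\|^{\omega}$ with $\omega=4\theta-2$, which is exactly~($\ref{angle}'$). The claim $\theta>\tfrac12$ then follows because $\omega\ge 0$ is forced by $\omega\in[0,2)$ being required in Definition~\ref{separable}, equivalently $4\theta-2\ge 0$.

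The main obstacle I anticipate is precisely the control of the cross term, i.e. showing that $\|b-a^+\|$ is comparable to $\|a^+-b^+\|$ (or otherwise handling the $(s-r)^2$ slack), so that the Łojasiewicz lower bound on $\|g\|$ translates into a lower bound on $1-\cos\alpha$ with the correct power $r^{4\theta-2}$ rather than being swamped when $b$ is far from $b^+$. I expect this is handled by working on a sufficiently small neighborhood $U$ and using that $b^+\in P_B(a^+)$ forces $\|b-a^+\|\ge r$ on one side, together with the fact that $\|g\|\le \|b-a^+\|+\|a^+-b^+\|$ must then actually be bounded below by $\gamma 2^{-\theta}r^{2\theta}$, pinching $s$ into the range where the estimate closes. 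Once the angle condition is established with this $\omega$ and $\gamma'$, the fact $\theta>\tfrac12$ is immediate from $\omega=4\theta-2\in[0,2)$, and in fact $\omega>0$ unless $\theta=\tfrac12$; the statement asserts $\omega\in(0,2)$, so one also notes that $\theta=\tfrac12$ cannot occur here — which would follow from a dimension/analyticity argument or simply be subsumed under the convention that if $\theta=\tfrac12$ one has $0$-separability, a stronger conclusion.
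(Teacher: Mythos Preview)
Your proposal contains a genuine gap, and you have in fact located it yourself: you cannot control $s=\|b-a^+\|$ in terms of $r=\|a^+-b^+\|$. The inequality $s\ge r$ always holds (since $b\in B$), but in a tangential intersection $s$ may be arbitrarily large compared to $r$, so the slack term $\tfrac12(s-r)^2$ can swamp the Łojasiewicz gain $\tfrac12\gamma^2 2^{-2\theta}r^{4\theta}$, and your hoped-for bound on $1-\cos\alpha$ collapses. No shrinking of $U$ repairs this, because the ratio $s/r$ is what blows up, not $s$ itself.

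The missing idea is that Lemma~\ref{first} gives you not just one subgradient but a whole ray of them: for \emph{every} $\lambda\ge 0$ the vector $g_\lambda=\lambda(b-a^+)+(a^+-b^+)$ lies in $\widehat\partial f(a^+)$, and the Łojasiewicz inequality applies to each. Hence
\[
d_B(a^+)^{-2\theta}\,\min_{\lambda\ge 0}\|\lambda(b-a^+)+(a^+-b^+)\|\ \ge\ 2^{-\theta}\gamma.
\]
For $\alpha=\angle(b-a^+,b^+-a^+)<\pi/2$ this minimum is the distance from $b^+-a^+$ to the ray $\mathbb R_+(b-a^+)$, namely $\|a^+-b^+\|\sin\alpha=r\sin\alpha$, which is \emph{independent of $s$}. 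This kills your obstacle outright: one gets $\sin\alpha\ge 2^{-\theta}\gamma\,r^{2\theta-1}$, and then $1-\cos\alpha\ge\tfrac12\sin^2\alpha$ yields the angle condition $(\ref{angle}')$ with $\omega=4\theta-2$ and $\gamma'=2^{-2\theta-1}\gamma^2$ exactly. The case $\alpha\ge\pi/2$ is handled separately (the minimum is then at $\lambda=0$).

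Your argument for $\theta>\tfrac12$ is also circular: you cannot invoke the requirement $\omega\in[0,2)$ from Definition~\ref{separable}, since that is what you are trying to establish. The paper instead reads $\theta>\tfrac12$ off the estimate $\sin\alpha\ge 2^{-\theta}\gamma\,r^{2\theta-1}$ directly: as building blocks approach $x^*$ one has $r\to 0$, and since the left side is bounded the exponent $2\theta-1$ must be positive.
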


\begin{proof}
Note that $f(a^*)=0$.
Therefore there exists a neighborhood $U$ of  $a^*\in A \cap B$ such that
\begin{eqnarray}
\label{prev}
f(a^+)^{-\theta} \|g\| \geq \gamma >0
\end{eqnarray}
for every $a^+\in A\cap U$ and every $g\in \widehat{\partial} f(a^+)$. Now let $a\to b\to a^+\to b^+$
be a building block with $a,b,a^+,b^+\in U$. 
From Lemma \ref{first}, $g = v + a^+-b^+\in \widehat{\partial} f(a^+)$
for every $v\in N_A^p(a^+)$ of the form $v=\lambda(b-a^+)$.  This uses the fact  that $a^+\in P_A(b)$. 
Hence by  (\ref{prev}) we have
 \[
 2^\theta 
 d_B(a^+)^{-2\theta} \|\lambda (b-a^+) + a^+ - b^+\| \geq \gamma> 0
 \]
for every $\lambda \geq 0$. We deduce
\begin{eqnarray}
\label{min}
d_B(a^+)^{-2\theta} \min_{\lambda\geq 0} \|\lambda(b-a^+) + a^+ - b^+\| \geq 2^{-\theta}\gamma.
\end{eqnarray}
Let us  for the time being consider angles $\alpha= \angle(b-a^+,b^+-a^+)$ smaller than $90^\circ$.
Then the minimum value in (\ref{min}) is
$d_B(a^+)^{-2\theta}\|a^+-b^+\| \sin\alpha$. Therefore 
\begin{eqnarray}
\label{erg}
\frac{\sin\alpha}{d_B(a^+)^{2\theta-1}} \geq  2^{-\theta} \gamma.
\end{eqnarray}
Since $1-\cos\alpha\geq\frac{1}{2}\sin^2\alpha$,  estimate (\ref{erg})
implies
\begin{eqnarray}
\label{almost}
\frac{1-\cos\alpha}{d_B(a^+)^{4\theta-2}} \geq  2^{-2\theta-1}\gamma^2.
\end{eqnarray}
This shows that we must have $\theta > \frac{1}{2}$,
because the numerator tends to 0,
so the denominator  has to go to zero, too, which it does for $4\theta -2>0$.  

Let us now discuss the case  where $\alpha \geq 90^0$. We claim that the same estimate (\ref{almost})
is still satisfied. 
Since $\cos\alpha < 0$,
the numerator $1-\cos\alpha$ in (\ref{almost}) is $\geq 1$. Moreover, the infimum in (\ref{min}) is now attained
at $\lambda=0$ with the value $\|a^+-b^+\|=d_B(a^+)$. Hence estimate (\ref{min})
implies $d_B(a^+)^{1-2\theta} \geq 2^{-\theta}\gamma$, hence
$d_B(a^+)^{2-4\theta} \geq 2^{-2\theta}\gamma^2> 2^{-2\theta-1}\gamma^2$, so  that (\ref{almost})
is satisfied.
This completes the proof.
\hfill $\square$
\end{proof}

\begin{theorem}
\label{theorem2}
Let $A,B$ be
closed subanalytic sets. Then $A,B$ intersect separably.  
\end{theorem}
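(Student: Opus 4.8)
The plan is to deduce the statement from the \L ojasiewicz inequality for subanalytic functions, feeding the resulting exponent into Lemma \ref{second}. Fix $x^* \in A \cap B$ (if $A\cap B = \emptyset$ there is nothing to prove) and consider the function $f = i_A + \tfrac12 d_B^2$ of Lemma \ref{first}. Since $A$ is closed, $f$ is lower semi-continuous with closed domain $A$, and $f|_A = \tfrac12 d_B^2|_A$ is continuous. Moreover $f \geq 0 = f(x^*)$ on $\R^n$, so $x^*$ is a global minimiser of $f$; by Fermat's rule $0 \in \widehat\partial f(x^*) \subset \partial f(x^*)$, so $x^*$ is a critical point of $f$ in the sense of Definition \ref{def_loja}.

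The second step is to check that $f$ is subanalytic near $x^*$. The indicator $i_A$ is subanalytic because $A$ is. For $d_B$ one must localise: since $d_B(x^*) = 0$, there is $\rho>0$ such that $d_B(x) = \mathrm{dist}(x, B \cap \ball(x^*,2\rho))$ for all $x \in \ball(x^*,\rho)$, and $B \cap \ball(x^*,2\rho)$ is a bounded subanalytic set. The distance to a bounded subanalytic set is subanalytic (it arises from a relatively compact subanalytic set by a fibrewise infimum, and projections of relatively compact subanalytic sets are subanalytic, see \cite{bierstone}); hence $d_B$, and therefore $d_B^2$ and $f = i_A + \tfrac12 d_B^2$, are subanalytic on $\ball(x^*,\rho)$. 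Equivalently, one may first replace $A$ and $B$ by their intersections with a fixed large closed ball, which alters neither $f$ nor its subdifferentials near $x^*$ but renders $f$ globally subanalytic.

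Now I would invoke the nonsmooth \L ojasiewicz gradient inequality for subanalytic functions: since $f$ is subanalytic, lower semi-continuous with closed domain, $f|_{\mathrm{dom}\,f}$ continuous, and $x^*$ is critical, there exist $\theta \in [0,1)$, $\gamma > 0$ and a neighbourhood $U$ of $x^*$ such that $|f(x) - f(x^*)|^\theta \leq \gamma^{-1}\|g\|$ for every $x \in U$ and every $g \in \partial f(x)$. Because $\widehat\partial f(x) \subset \partial f(x)$, the same estimate holds for all $g \in \widehat\partial f(x)$, i.e.\ $f$ satisfies the \L ojasiewicz inequality of Definition \ref{def_loja} at $x^*$ with exponent $\theta$ and constant $\gamma$. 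By Lemma \ref{second} it follows that $\theta > \tfrac12$ and that $B$ intersects $A$ separably at $x^*$ with exponent $\omega = 4\theta - 2 \in (0,2)$ and constant $\gamma' = 2^{-2\theta-1}\gamma^2$. Running the identical argument with the roles of $A$ and $B$ exchanged, i.e.\ applied to $g = i_B + \tfrac12 d_A^2$, shows that $A$ intersects $B$ separably at $x^*$ as well. Hence $A$ and $B$ intersect separably at $x^*$, and since $x^* \in A\cap B$ was arbitrary, the theorem follows.

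The main obstacle is the subanalyticity step: the distance function to an \emph{unbounded} subanalytic set need not be subanalytic, so the proof genuinely relies on replacing $B$ (and $A$) by a bounded neighbourhood before appealing to stability of subanalytic sets under relatively compact projections. The remaining points — criticality of $x^*$, the passage from the limiting to the Fr\'echet subdifferential in the \L ojasiewicz estimate, and the final symmetrisation — are routine once Lemma \ref{second} and the nonsmooth \L ojasiewicz inequality are available.
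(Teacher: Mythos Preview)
Your proposal is correct and follows essentially the same route as the paper: verify that $f=i_A+\tfrac12 d_B^2$ is subanalytic with $x^*$ critical, invoke the nonsmooth \L ojasiewicz inequality (the paper cites \cite[Theorem~3.1]{bolte1}), pass from $\partial f$ to $\widehat\partial f$, apply Lemma~\ref{second}, and then swap the roles of $A$ and $B$. The only minor difference is the subanalyticity argument for $d_B$: the paper cites \cite[I.2.1.11]{shiota} directly, whereas you (more cautiously) localise to a bounded piece of $B$ first; both are valid and lead to the same conclusion.
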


\begin{proof}
We assume $A\cap B\not=\emptyset$, otherwise there is nothing to prove.
Consider the function $f:\mathbb R^n \to \mathbb R \cup\{\infty\}$
defined as $f(x) = i_A(x) + \frac{1}{2}d_B(x)^2$. Then $f$ has closed domain $A$ and is continuous on 
$A$, which makes it amenable to definition \ref{def_loja}. Every
$x^*\in A \cap B$ is a critical point of $f$. Since $A,B$ are subanalytic sets,
$f$ is subanalytic. That can be seen as follows. First observe that
$d_B$ is subanalytic by \cite[I.2.1.11]{shiota}. Then $d_B^2$ is subanalytic
as the product of two subanalytic functions by \cite[I.2.1.9]{shiota}. Finally, graph$(f) = (A \times \mathbb R) \cap {\rm graph}(\frac{1}{2}d_B^2)$
shows that $f$ is subanalytic.

Now we invoke Theorem 3.1 of \cite{bolte1},
which asserts that $f$ satisfies the \L ojasiewicz inequality at $x^*$ for some $\theta \in (0,1)$. 
Hence (\ref{prev}) is true for every $g\in \partial f(a^+)$, and therefore also for every $g\in \widehat{\partial}f(a^+)$.
Applying Lemma
\ref{second}, we deduce that $B$ intersects $A$ separably with $\omega=4\theta-2$. Interchanging
the roles of $A$ and $B$, it follows also that $A$ intersects $B$ separably.
\hfill $\square$
\end{proof}

\begin{corollary}
\label{cor5}
{\bf (Local convergence for subanalytic sets)}.
Let $A,B$  be subanalytic. Suppose 
$B$ is H\"older regular at $x^*\in A \cap B$ with respect to $A$.
Then there exists a neighborhood $V$ of $x^*$ such that every sequence of alternating projections $a_k,b_k$
which enters $V$ converges to some $b^*\in A \cap B$ with rate
$\|b_k-b^*\|=\mathcal O(k^{-\rho})$ for some $\rho\in (0,\infty)$.
\hfill $\square$
\end{corollary}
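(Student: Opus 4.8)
The plan is to assemble Corollary \ref{cor5} from the pieces already established, so the proof is essentially a bookkeeping argument. First I would invoke Theorem \ref{theorem2}, which tells us that closed subanalytic sets $A,B$ intersect separably; more precisely, tracing through Lemma \ref{second} and the proof of Theorem \ref{theorem2}, the L\"ojasiewicz inequality for $f=i_A+\frac12 d_B^2$ at $x^*$ holds with some exponent $\theta\in(\frac12,1)$, so $B$ intersects $A$ separably with exponent $\omega=4\theta-2\in(0,2)$ and some constant $\gamma'>0$. The key point is that $\omega$ is strictly less than $2$; this is exactly what was extracted in Lemma \ref{second} from the fact that $\theta<1$.

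Next I would reconcile the H\"older-regularity hypothesis with the constant constraint needed in Theorem \ref{theorem1}. By assumption $B$ is H\"older regular at $x^*$ with respect to $A$, meaning it is $\sigma$-H\"older regular for \emph{every} $\sigma\in[0,1)$; in particular it is $\omega/2$-H\"older regular at $x^*$ (note $\omega/2\in[0,1)$ since $\omega<2$), and the remark following Proposition \ref{regular} together with the definition allows the constant $c$ to be taken as small as we like, in particular $c<\gamma'/2$. With separability exponent $\omega\in(0,2)$, constant $\gamma'$, and $\omega/2$-H\"older regularity with constant $c<\gamma'/2$ all in hand, the hypotheses of Theorem \ref{theorem1} are met, so there is a neighborhood $V$ of $x^*$ such that every sequence of alternating projections entering $V$ converges to a point $b^*\in A\cap B$.

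Finally, since $\omega\in(0,2)$, Corollary \ref{corollary} applies and gives $\|b_k-b^*\|=\mathcal O(k^{-(2-\omega)/(2\omega)})$ and likewise for $a_k$. Setting $\rho=(2-\omega)/(2\omega)$, which is a positive real number because $\omega\in(0,2)$, yields the stated rate $\|b_k-b^*\|=\mathcal O(k^{-\rho})$ for some $\rho\in(0,\infty)$. The main obstacle, such as it is, is not a deep one: it is making sure the H\"older-regularity constant $c$ can actually be chosen compatibly with the separability constant $\gamma'$ produced by the L\"ojasiewicz argument, since $\gamma'$ is not under our control. This is precisely why the hypothesis is phrased as ``$B$ is H\"older regular'' (for every $\sigma$, with arbitrarily small constant) rather than ``$B$ is $\sigma$-H\"older regular for one particular $\sigma$ and $c$'': the freedom in both $\sigma$ and $c$ is exactly what lets the two conditions be matched, whatever $\theta$ (hence $\omega$ and $\gamma'$) turns out to be.
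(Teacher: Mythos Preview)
Your proposal is correct and follows exactly the route the paper intends: combine Theorem \ref{theorem2} (subanalyticity gives separability with some $\omega\in(0,2)$ via Lemma \ref{second}), use the full H\"older regularity hypothesis together with the second remark after Proposition \ref{regular} to obtain $\omega/2$-H\"older regularity with a constant $c<\gamma'/2$, and then invoke Theorem \ref{theorem1} and Corollary \ref{corollary} for convergence with rate $\rho=(2-\omega)/(2\omega)$. Your observation about why the hypothesis must be ``H\"older regular for every $\sigma$'' rather than for a single fixed $\sigma$ and $c$ is precisely the point that makes the corollary go through, and the paper's terse $\square$ is relying on the reader to supply exactly this reasoning.
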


\begin{corollary}
\label{mainsub}
Let $A,B$ be closed subanalytic sets and suppose $B$
has slowly vanishing reach with respect to $A$. Let $x^*\in A \cap B$, then
there exists a neighborhood $U$ of $x^*$ such that every sequence
of alternating projections $a_k,b_k$ which enters $U$ converges to some $b^*\in A \cap B$ with  rate $\|b_k-b^*\| = \mathcal O(k^{-\rho})$ for
some $\rho \in (0,\infty)$. \hfill $\square$
\end{corollary}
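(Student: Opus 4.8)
The plan is to deduce Corollary \ref{mainsub} by combining Corollary \ref{cor5} with the H\"older regularity results established in Section \ref{hoelder}. The hypothesis that $B$ has slowly vanishing reach with respect to $A$ should be read as: for every $\sigma \in (0,1)$, the set $B$ has $\sigma$-slowly vanishing reach with respect to $A$ at $x^*$ in the sense of the relevant definition, i.e. there is a rate $\tau = \tau(\sigma) \in [0,1)$ such that (\ref{slow}) holds. First I would invoke Proposition \ref{reach}: for each $\sigma \in (0,1)$, $\sigma$-slowly vanishing reach with rate $\tau < 1$ yields that $B$ is $(1-\sigma)$-H\"older regular with respect to $A$ at $x^*$, with a constant $c > 0$ that can be taken arbitrarily small (subject to $\frac{\tau}{2}\sqrt{2+c} < 1$, which is satisfiable since $\tau < 1$). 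As $\sigma$ ranges over $(0,1)$, the exponent $1-\sigma$ ranges over $(0,1)$, so $B$ is $\sigma'$-H\"older regular with respect to $A$ at $x^*$ for every $\sigma' \in (0,1)$; together with the $\sigma'=0$ case (which follows from the $\sigma' > 0$ cases by the monotonicity remark after Proposition \ref{regular}, or directly if prox-regularity-type behavior is available), this means $B$ is H\"older regular with respect to $A$ at $x^*$ in the sense of the definition in Section \ref{hoelder}.

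Second, I would apply Corollary \ref{cor5}: since $A,B$ are closed subanalytic and $B$ is H\"older regular at $x^*$ with respect to $A$, there is a neighborhood $U$ of $x^*$ such that every sequence of alternating projections entering $U$ converges to some $b^* \in A \cap B$ with rate $\|b_k - b^*\| = \mathcal O(k^{-\rho})$ for some $\rho \in (0,\infty)$. This is precisely the assertion of Corollary \ref{mainsub}, so the proof is a two-line concatenation once the reduction of "slowly vanishing reach" to "H\"older regularity" is in place. I would phrase the proof as: ``By Proposition \ref{reach}, slowly vanishing reach of $B$ with respect to $A$ implies that $B$ is $\sigma$-H\"older regular with respect to $A$ at $x^*$ for every $\sigma \in (0,1)$, hence H\"older regular with respect to $A$. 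Corollary \ref{cor5} now gives the conclusion.''

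The main obstacle, such as it is, is bookkeeping rather than mathematics: one must make sure the quantifier structure in ``$B$ has slowly vanishing reach with respect to $A$'' matches the quantifier structure in ``$B$ is H\"older regular with respect to $A$''. The definition of H\"older regularity demands $\sigma$-H\"older regularity for \emph{every} $\sigma \in [0,1)$, so I must check that ``$B$ has slowly vanishing reach'' is understood to supply, for every $\sigma \in (0,1)$, a rate $\tau(\sigma) < 1$ realizing (\ref{slow}); then Proposition \ref{reach} produces $(1-\sigma)$-H\"older regularity for each such $\sigma$, covering the whole range $(0,1)$ of exponents, and the $\sigma = 0$ exponent is then free by the downward-monotonicity remark (if $B$ is $\sigma$-H\"older regular with constant $c$ for some small $\sigma > 0$, it is $0$-H\"older regular with any smaller constant on a possibly smaller neighborhood). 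Once this alignment is explicit, no genuine difficulty remains, and the result follows immediately from Corollary \ref{cor5}.
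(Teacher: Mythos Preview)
Your proposal is correct and matches the paper's intended argument: the corollary is stated without proof (closed by $\square$), and the evident route is precisely the one you outline, namely Proposition~\ref{reach} to pass from slowly vanishing reach to $\sigma$-H\"older regularity for every $\sigma\in(0,1)$ (hence H\"older regularity, the $\sigma=0$ case following from the monotonicity remark), and then Corollary~\ref{cor5}. Your care with the quantifier alignment is warranted, since the paper does not spell out that ``slowly vanishing reach'' without an exponent means $\sigma$-slowly vanishing reach for every $\sigma\in(0,1]$; but that is the reading consistent with Proposition~\ref{prox} and the corollary following Proposition~\ref{reach}, and with it your argument goes through verbatim.
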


Recall from \cite{bierstone,shiota} that a subset $A$ of $\mathbb R^n$ is called
{\em semialgebraic} if for every $x\in\mathbb R^n$ there exists a neighborhood
$V$ of $x$ such that (\ref{semi}) is satisfied with  $\phi_{ij},\psi_{ij}$ polynomials. Naturally, this means that every semialgebraic set is
semianalytic, hence subanalytic. By combining Theorems  \ref{theorem1}
and \ref{theorem2}, we therefore obtain the following result.

\begin{corollary}
{\bf (Borwein, Li,  Yao \cite{borwein})}. 
Let $A,B$ be closed convex semialgebraic sets with nonempty intersection. Then there exists
$\rho\in (0,\infty)$ such that every sequence
of alternating projections converges with rate $\|b_k-b^*\| = \mathcal O(k^{-\rho})$.
\hfill $\square$
\end{corollary}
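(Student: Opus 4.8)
The plan is to deduce the statement from Corollary \ref{cor5} by checking that convexity of $B$ supplies the H\"older regularity required there, and then to use the classical convex theory recalled in the introduction to turn the local conclusion into a global rate.

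First I would note that a semialgebraic set satisfies (\ref{semi}) with polynomial $\phi_{ij},\psi_{ij}$, hence is semianalytic, hence subanalytic; thus $A$ and $B$ are closed subanalytic sets, and by Theorem \ref{theorem2} they intersect separably at every point of $A\cap B$. Next, since $B$ is convex the projection $P_B$ is single-valued on all of $\mathbb R^n$, so $B$ is prox-regular at every $b^*\in B$; by the Corollary following Proposition \ref{reach} (H\"older regularity from prox-regularity), together with the remark that prox-regularity at $b^*$ gives slowly vanishing reach with respect to any closed set containing $b^*$, the set $B$ is H\"older regular with respect to $A$ at every $x^*\in A\cap B$. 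With separability and H\"older regularity both in hand at every point of $A\cap B$, Corollary \ref{cor5} (equivalently Corollary \ref{mainsub}, using Proposition \ref{prox}) applies: for each $x^*\in A\cap B$ there is a neighborhood $V$ of $x^*$ and an exponent $\rho\in(0,\infty)$ such that any alternating sequence entering $V$ converges to a point of $A\cap B$ with rate $\mathcal O(k^{-\rho})$.

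It remains to globalize. Since $A\cap B\neq\emptyset$, fix $z\in A\cap B$; as orthogonal projections onto convex sets are nonexpansive and fix $z$, the sequence $b_k$ is Fej\'er monotone with respect to $z$, hence bounded, and by the classical convex theory (see \cite{bauschke-survey}) it converges to some $b^*\in A\cap B$. Applying the previous paragraph with $x^*=b^*$ yields a neighborhood $V$ of $b^*$ and an exponent $\rho\in(0,\infty)$; since $b_k\to b^*$ there is an index $k_0$ with $b_k\in V$ for all $k\geq k_0$, so after relabelling the tail the rate $\|b_k-b^*\|=\mathcal O(k^{-\rho})$ holds, and since only finitely many initial terms were discarded, the same $\mathcal O$-estimate holds for the original sequence.

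The one point requiring care is precisely this local-to-global step: one needs the limit $b^*$ produced by the convex theory to be a point at which the hypotheses of Corollary \ref{cor5} hold, and one needs the convergent sequence to enter the corresponding neighborhood. Both are automatic here, since separability (Theorem \ref{theorem2}) and prox-regularity (convexity) are valid at every point of $A\cap B$, and a convergent sequence enters every neighborhood of its limit; so beyond this bookkeeping there is nothing further to prove.
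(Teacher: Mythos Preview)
Your argument is correct and matches the paper's one-line derivation (``By combining Theorems \ref{theorem1} and \ref{theorem2}''): semialgebraic $\Rightarrow$ subanalytic gives separable intersection via Theorem \ref{theorem2}, convexity gives prox-regularity hence H\"older regularity, and Corollary \ref{cor5} supplies the local rate; you then add the globalization step via Fej\'er monotonicity that the paper leaves implicit.

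One caveat worth noting: as you have written it (and as the paper's sketch equally does), the exponent $\rho$ is produced \emph{after} the limit $b^*$ is known, so it depends on the sequence, whereas the corollary is phrased with a single $\rho$ valid for \emph{every} alternating sequence. To close that quantifier gap one would need a uniform \L ojasiewicz exponent for $f=i_A+\tfrac12 d_B^2$ on a neighborhood of (a compact piece of) $A\cap B$, or a compactness argument; neither your write-up nor the paper supplies this, so strictly speaking both establish only the weaker ``for every sequence there exists $\rho$'' version.
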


As a variant of the method of alternating projects
consider the averaged projection method. Given closed sets $C_1,\dots,C_m$,
the method generates
a sequence $x_n$ by the recursion $x_{n+1}\in \frac{1}{m}\left(P_{C_1}(x_n)+\dots +P_{C_m}(x_n)\right)$.

\begin{corollary}
Let $C_1,\dots,C_m$ be subanalytic sets in $\mathbb R^d$, and let $c^*\in C_1 \cap \dots \cap C_m$. Then there exists a neighborhood
$U$ of $c^*$ such that whenever a sequence $x_n$ of averaged projections 
enters $U$, then it converges to some $x^*\in C_1 \cap \dots \cap  C_m$ with rate ${\|x_k-x^*\|}=\mathcal O(k^{-\rho})$
for some $\rho\in (0,\infty)$.
\end{corollary}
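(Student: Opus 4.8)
The plan is to reduce the averaged projection method to alternating projections in a product space (Pierra's trick) and then apply Corollary \ref{cor5}. I would work in $\R^{dm}$ and set $\mathbf C := C_1 \times \cdots \times C_m$, and let $\mathbf D := \{(x,\dots,x) : x \in \R^d\}$ be the diagonal. For $\mathbf y = (y_1,\dots,y_m)\in\R^{dm}$ one has $P_{\mathbf C}(\mathbf y) = P_{C_1}(y_1)\times\cdots\times P_{C_m}(y_m)$, while $P_{\mathbf D}$ is single valued with $P_{\mathbf D}(\mathbf y) = (\bar y,\dots,\bar y)$, $\bar y = \frac1m\sum_i y_i$. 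Consequently, starting from a diagonal point $\mathbf x_n = (x_n,\dots,x_n)$, a building block $\mathbf x_n\to\mathbf a_n\to\mathbf x_{n+1}$ of alternating projections between $\mathbf D$ and $\mathbf C$ is nothing but a step $x_{n+1}\in\frac1m(P_{C_1}(x_n)+\cdots+P_{C_m}(x_n))$ of the averaged projection method; conversely every averaged sequence lifts in this way, by choosing at step $n$ selections $p_i^{(n)}\in P_{C_i}(x_n)$ realizing the average and setting $\mathbf a_n = (p_1^{(n)},\dots,p_m^{(n)})\in P_{\mathbf C}(\mathbf x_n)$. Note that $\mathbf C\cap\mathbf D$ is the diagonal copy of $C_1\cap\cdots\cap C_m$, and $\mathbf x^* := (c^*,\dots,c^*)\in\mathbf C\cap\mathbf D$.

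Next I would verify the hypotheses of Corollary \ref{cor5} for the pair $(\mathbf C,\mathbf D)$ at $\mathbf x^*$, with $\mathbf D$ playing the role of $B$. The set $\mathbf C$ is subanalytic, being a finite product of subanalytic sets, and $\mathbf D$, being an affine subspace, is semialgebraic, hence subanalytic. Moreover $\mathbf D$ is convex, so prox-regular at each of its points; by the corollary \emph{(H\"older regularity from prox-regularity)} it is therefore $\sigma$-H\"older regular for every $\sigma\in[0,1)$ with respect to any closed set containing the point, in particular with respect to $\mathbf C$ at $\mathbf x^*$. Hence $\mathbf D$ is H\"older regular with respect to $\mathbf C$ at $\mathbf x^*$, and Corollary \ref{cor5} provides a neighborhood $W$ of $\mathbf x^*$ in $\R^{dm}$ and an exponent $\rho\in(0,\infty)$ such that every alternating sequence between $\mathbf C$ and $\mathbf D$ entering $W$ converges to some $\mathbf b^*\in\mathbf C\cap\mathbf D$ with $\|\mathbf b_k-\mathbf b^*\| = \mathcal O(k^{-\rho})$, where $\mathbf b_k$ denotes the $\mathbf D$-iterates.

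Finally I would transfer this back to $\R^d$. I would choose $\delta>0$ with $\ball(\mathbf x^*,\delta\sqrt m)\subset W$ and put $U := \ball(c^*,\delta)$. If a sequence $x_n$ of averaged projections enters $U$, say $x_{n_0}\in U$, then in the lifted alternating sequence $\mathbf b_{n_0} = (x_{n_0},\dots,x_{n_0})\in\ball(\mathbf x^*,\delta\sqrt m)\subset W$, so the lifted sequence enters $W$; by the previous step it converges to some $\mathbf b^* = (x^*,\dots,x^*)$ with $x^*\in C_1\cap\cdots\cap C_m$, and since $\|\mathbf b_k-\mathbf b^*\| = \sqrt m\,\|x_k-x^*\|$ we obtain $\|x_k-x^*\| = \mathcal O(k^{-\rho})$, as required. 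There is no substantial obstacle here: the content is the product-space reduction itself, and the only points needing a little care are the exact correspondence of iterates (including the set-valued case) and the stability of subanalyticity under finite products, both of which are routine.
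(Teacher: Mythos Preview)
Your proposal is correct and follows essentially the same route as the paper: the product-space reduction with $\mathbf C=C_1\times\cdots\times C_m$ and the diagonal $\mathbf D$, subanalyticity of both, prox-regularity (hence H\"older regularity) of the convex diagonal, and an appeal to Corollary~\ref{cor5}. If anything, you are slightly more careful than the paper in handling the set-valued selections and in transferring the product-space neighborhood back to $\mathbb R^d$ via the $\sqrt m$ factor.
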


\begin{proof}
We follow a standard procedure and define
closed sets in the product space $\mathbb R^d \times \dots \times \mathbb R^d$ ($m$ times)
as $B = \{(x,\dots,x): x\in \mathbb R^d\}$, and
$A=C_1\times \dots \times C_m$. Note that $A$ is again
subanalytic by \cite[I.2.1.1]{shiota},  whereas $B$ is  convex and subanalytic. We have
$P_B(x_1,\dots,x_m)= (\frac{1}{m}(x_1+\dots +x_m),\dots, \frac{1}{m}(x_1+\dots+x_m))$, while $P_A (x_1,\dots,x_m)
=(P_{C_1}(x_1),\dots,P_{C_m}(x_m))$. Therefore a sequence of averaged projections between
$C_1,\dots,C_m$  generates a sequence of alternating projections between $A,B$.

Since $B$ is convex, it is prox-regular hence H\"older regular with respect to $A$, so
by Corollary \ref{cor5} there exists a neighborhood
$\mathcal U=U\times\dots\times  U$ of $(c^*,\dots,c^*)\in A \cap B$
such that every alternating sequence which enters  $\mathcal U$ converges to some  $(x^*,\dots,x^*)\in A\cap B$
with rate $\mathcal O(k^{-\rho})$ for some $\rho\in (0,\infty)$.
Now consider an averaged projection sequence $x_k$ entering $U$. It follows
that $(x_k,\dots,x_k)\in \mathcal U$, hence $x_k$  converges to $x^*$ with that same rate.
\hfill $\square$
\end{proof}

\begin{remark}
We mention a related averaged projection method
in \cite[Corollary 12]{attouch}, where the authors use the Kurdyka-\L ojasiewicz inequality. The employed technique indicates that
results in the spirit of Theorem \ref{theorem2} could be obtained for more general classes of sets definable in an o-minimal structure \cite{attouch}.
\end{remark}

We conclude this section with an application of
Theorem \ref{theorem1},  demonstrating its versatility as a convergence test
in practical situations. 
Let
$\mathbb C^N$ be a finite dimensional unitary space,  
and  consider the discrete Fourier transform 
\[
\widehat{x}(\omega)= \frac{1}{\sqrt{N}} \sum_{t=0}^{N-1} e^{2\pi i t\cdot \omega /N} x(t), \quad \omega=0,\dots,N-1
\]
as a unitary linear operator  $x \to \widehat{x}$ of $\mathbb C^N$. The phase retrieval problem \cite{gerchberg,elser}
consist in estimating an unknown signal $x\in \mathbb C^N$ whose Fourier
amplitude $| \widehat{x}(\omega)|=a(\omega)$, $\omega=0,\dots,N-1$, is known.  In physical terminology, 
identifying $x$  means retrieving
its unknown phase $\widehat{x}(\omega)/|\widehat{x}(\omega)|$ in frequency domain. 

Formally, given
a function $a(\cdot):\{0,\dots,N-1\} \to [0,\infty)$, we have to find an element of the set
$B=\{x\in \mathbb C^N: |\widehat{x}(\omega)| = a(\omega) \mbox{ for all } \omega=0,\dots,N-1\}$.  
Since this problem
is underdetermined,  additional information about $x$
in a different Fourier plane or  in the time domain
is added.  We represent it  in the abstract form $x\in A$ for a closed set $A$.
Then the phase retrieval problem is to find $x\in A \cap B$.

The famous Gerchberg-Saxton error reduction
algorithm \cite{gerchberg} computes a solution of the phase retrieval
problem by generating a sequence of estimates as follows:
Given $x\in \mathbb C^N$,
compute $\widehat{x}$ and correct its Fourier amplitude by putting
${y}(\omega) = a(\omega)\, \widehat{x}(\omega)/|\widehat{x}(\omega)|$ if $\widehat{x}(\omega)\not=0$,
and ${y}(\omega)=a(\omega)$ if there is extinction $\widehat{x}(\omega)=0$. For short, ${y}=a\widehat{x}/|\widehat{x}|$
with the convention $0/|0|=1$.
Then
compute the inverse discrete Fourier transform $\widetilde{{y}}$  of $y$ and 
build the new iterate $x^+$ by projecting $\widetilde{y}$ on the set $A$, that is $x^+\in P_A(\widetilde{y})$.
In condensed notation:
\begin{eqnarray}
\label{GS}
x^+ \in P_A\left( (a\widehat{x}/|\widehat{x}|)^\sim \right).
\end{eqnarray}

\begin{corollary}
\label{gerchberg}
 {\bf (Gerchberg-Saxton error reduction).}
Suppose the  constraint $x\in A$ is represented by a subanalytic set $A$. Let
$x^*\in A \cap B$ be a solution of the phase retrieval problem.
Then there exists $\epsilon > 0$ such that whenever a Gerchberg-Saxton sequence
$x_k$  enters $\ball(x^*,\epsilon)$, then it converges to a solution $\bar{x}\in A \cap B$ of the 
phase retrieval problem with rate of convergence  $\|x_k-\bar{x}\|=\mathcal O(k^{-\rho})$
for some $\rho\in (0,\infty)$.
\end{corollary}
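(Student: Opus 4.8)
The plan is to recognize the Gerchberg--Saxton iteration (\ref{GS}) as a disguised sequence of alternating projections between the set $A$ and the set $B$ in $\mathbb C^N$, and then to apply Corollary \ref{cor5}. First I would check that $B=\{x\in\mathbb C^N: |\widehat x(\omega)|=a(\omega)\ \forall\omega\}$ is subanalytic (in fact semialgebraic): identifying $\mathbb C^N\cong\mathbb R^{2N}$, the discrete Fourier transform is a real-linear isomorphism, and the constraints $|\widehat x(\omega)|^2=a(\omega)^2$ are polynomial equations in the real and imaginary parts, so $B$ is a semialgebraic set, hence subanalytic; $A$ is subanalytic by hypothesis. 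Thus Theorem \ref{theorem2} applies: $A,B$ intersect separably at $x^*$.

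Next I would verify the projection identity that drives the corollary: the map $y\mapsto a\,\widehat y/|\widehat y|$ (with the convention $0/|0|=1$) followed by the inverse Fourier transform computes exactly a point of $P_B(y)$. Concretely, since the Fourier transform is unitary, $d_B(y)^2=\sum_\omega \big(|\widehat y(\omega)|-a(\omega)\big)^2$, and the minimizing element $z\in B$ is obtained by replacing each Fourier coefficient $\widehat y(\omega)$ by the point on the circle of radius $a(\omega)$ closest to it, namely $a(\omega)\widehat y(\omega)/|\widehat y(\omega)|$ when $\widehat y(\omega)\neq0$, and an arbitrary point of that circle (e.g. $a(\omega)$) otherwise. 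Applying the inverse transform gives $(a\widehat y/|\widehat y|)^\sim\in P_B(y)$. Therefore, starting from $x_k\in A$, the point $\widetilde y_k:=(a\widehat{x_k}/|\widehat{x_k}|)^\sim$ lies in $P_B(x_k)$, and $x_{k+1}\in P_A(\widetilde y_k)$; so the Gerchberg--Saxton sequence $x_k$ together with the intermediate iterates $\widetilde y_k$ is precisely a sequence of alternating projections between $A$ and $B$.

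It remains to supply the H\"older regularity hypothesis needed by Corollary \ref{cor5}. Here I would invoke that $B$ is prox-regular at $x^*$: indeed, $B$ is a compact smooth submanifold of $\mathbb R^{2N}$ near any point where all $a(\omega)>0$ (it is a product of circles transported by a linear isomorphism), and more generally a semialgebraic set which, being a bounded intersection of the linear preimage of the torus $\prod_\omega\{|\zeta_\omega|=a(\omega)\}$, has positive reach at $x^*$; by the corollary following Proposition \ref{reach}, prox-regularity gives that $B$ is $\sigma$-H\"older regular with respect to $A$ for every $\sigma\in[0,1)$ with arbitrarily small constant $c$. Combining this with the separable intersection from Theorem \ref{theorem2}, Corollary \ref{cor5} yields a neighborhood $\ball(x^*,\epsilon)$ such that any alternating sequence entering it converges to some $\bar x\in A\cap B$ with rate $\|x_k-\bar x\|=\mathcal O(k^{-\rho})$; since the $x_k$ are exactly the $A$-iterates of such a sequence, this is the assertion.

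The main obstacle I anticipate is the degenerate-frequency case: if $a(\omega)=0$ for some $\omega$, the circle of radius $a(\omega)$ degenerates to a point and the torus factor collapses, so one must argue that $B$ remains smooth (or at least prox-regular) at $x^*$ even then — which it is, since a zero constraint $|\widehat x(\omega)|=0$ is the single linear equation $\widehat x(\omega)=0$ — and one must also handle the convention $0/|0|=1$ so that the stated map genuinely selects an element of the possibly set-valued $P_B$. Once the projection identity is nailed down cleanly and prox-regularity of $B$ at $x^*$ is justified in all cases, the rest is a direct citation of Corollary \ref{cor5}.
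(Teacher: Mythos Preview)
Your proposal is correct and follows essentially the same line as the paper: identify Gerchberg--Saxton as alternating projections between $A$ and $B$, verify that $B$ is subanalytic via the real-quadratic Fourier-amplitude constraints, establish prox-regularity of $B$ (the paper does this by exhibiting single-valuedness of $P_B$ on the explicit neighborhood $\{x:|\widehat x(\cdot)|\ge a(\cdot)/2\}$, splitting into circle projections coordinatewise, which handles the $a(\omega)=0$ case cleanly), and then invoke the subanalytic convergence corollary. The only cosmetic difference is that the paper cites Corollary \ref{mainsub} (prox-regularity $\Rightarrow$ slowly vanishing reach) whereas you cite Corollary \ref{cor5} (prox-regularity $\Rightarrow$ H\"older regularity); both routes are valid and rest on the same facts.
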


\begin{proof}
With the convention $0/|0|=1$,  the mapping
$x \mapsto \left( a\, \widehat{x}/|\widehat{x}| \right)^\sim$ is an orthogonal projection on the set
$B=\{x\in \mathbb C^N: |\widehat{x}(\cdot)|=a(\cdot)\}$. (See for instance \cite[(8),(10)]{BCL}, where the authors consider even the function space case).
Therefore the Gerchberg-Saxton algorithm (\ref{GS}) is an instance of the alternating projection
methods between the subanalytic set $A$ and the Fourier amplitude  set $B$. We show that
$B$ is subanalytic  and  prox-regular.  Local convergence with rate $\mathcal O(k^{-\rho})$ then follows
from Corollary \ref{mainsub}.  

As far as subanalyticity of $B$ is concerned, observe that on identifying $\mathbb C^N$
with $\mathbb R^{2N}$ via $\widehat{x}(\omega) = \widehat{x}_1(\omega) + i\, \widehat{x}_2(\omega)$, we have
\[
B=\bigcap_{\omega=0}^{N-1}\left\{x\in \mathbb C^N: \widehat{x}_1(\omega)^2 + \widehat{x}_2(\omega)^2 - a(\omega)^2=0\right\} ,
\]
which is clearly a representation of the form (\ref{semi}), since the discrete Fourier transform $x \mapsto \widehat{x}$ is analytic.

To show prox-regularity of $B$, we have to show that the projection on $B$ is single-valued in a neighborhood of $B$. 
With the same identification $\mathbb C^N \cong \mathbb R^{2N}$ evoked before, 
the projection on $B$ splits into $N$ projections in $\mathbb R^2$, 
given as $(\widehat{x}_1(\omega),\widehat{x}_2(\omega))\to a(\omega) \left( \frac{\widehat{x}_1(\omega)}{\sqrt{\widehat{x}_1(\omega)^2+\widehat{x}_2(\omega)^2}},  
\frac{\widehat{x}_2(\omega)}{\sqrt{\widehat{x}_1(\omega)^2+\widehat{x}_2(\omega)^2}  }\right)$.
In the case $a(\omega) = 0$ this is the projection onto the origin, which is clearly single-valued.
For $a(\omega) > 0$ this is the orthogonal projection 
onto the sphere of radius $a(\omega)$ in $\mathbb R^2$, which is single-valued except at the origin $(\widehat{x}_1(\omega),\widehat{x}_2(\omega))=(0,0)$. 
This means the projection on $B$ is unique on the neighborhood
$U=\{x\in \mathbb C^N: |\widehat{x}(\cdot)| \geq a(\cdot)/2\}$ of $B$, proving prox-regularity of $B$.
\hfill $\square$
\end{proof}

\begin{remark}
The constraint $x\in A$ may represent additional measurements, or it may
include prior information about the unknown image. In the original work \cite{gerchberg} 
$x\in A$  represents Fourier amplitude information from a second
Fourier plane, which is a constraint analogous to $x\in B$. The constraint $x\in A$ may also represent prior information 
about the support supp$(x)$ of the unknown signal $x$ in physical domain. It may for instance
be known that supp$(x)\subset S$, where $S$ is a subset of
$\{0,\dots,N-1\}$ with card$(S)\ll N$, or with a periodic structure. This is  known as an atomicity constraint in crystallographic
phase retrieval \cite{elser}. For $A=\{x\in \mathbb C^N: x(t)=0 \mbox{ for } t\not\in S\}$, 
$P_A$ is simply truncation $y \to y \cdot 1_S$.  Here
the Gerchberg-Saxton error correction method has the explicit form
\[
x^+(t) = \left\{
\begin{array}{ll}
\left( a\widehat{x}/|\widehat{x}| \right)^\sim(t) &\mbox{if } t\in S\\
0&\mbox{else}
\end{array}
\right.
\]
Other choices of the constraint $x\in A$ have been discussed in the literature, see e.g. \cite{elser}.
Our convergence result requires only subanalyticity of $A$, a condition which is always satisfied in practice. 
\end{remark}

\section{Examples}
\label{examples}

\begin{example}
\label{packman}
{\bf (Packman gulping an ice-cream cone  the wrong way).}
Consider packman
$B=\{(x,y)\in \mathbb R^2: x^2+y^2 \leq 1, x \leq |y|\}$ the instant before it
scarfs down
the ice-cream cone section $A=\{(x,y) \in \mathbb R^2: 0\leq x^2+y^2 \leq 1, 2|y| \leq x\}$
fitting symmetrically into
its notch. We have $A \cap B = \{(0,0)\}$, leaving an angular gap 
of $15^\circ$ on both sides. 

Let $a^+=(x,\frac{1}{2}x)\in \partial A$, then $b^+=P_B(a^+)=(\frac{3}{4}x,\frac{3}{4}x)\in \partial B$,
which means $r=\|b^+-a^+\|=\frac{\sqrt{2}}{4}x$. It is easy to see that condition (\ref{empty})
is satisfied for every $c<1$ and arbitrary $\sigma\in [0,1)$, i.e., $B$ is H\"older regular with respect to $A$.
This example shows that H\"older regularity applies to sets which have inward corners
and fail Clarke regularity.

Note that since $B$ is 
not Clarke regular at $x^*=(0,0)$, 
it is not superregular  in the sense of \cite{luke}.
What is more, $B$ is not $(A,\epsilon,\delta)$-regular
in the sense of \cite{bauschke1}
at $x^*=(0,0)$,  regardless how $\epsilon,\delta > 0$ are chosen,  because the cone 
$b^++\{v: \langle a^+-b^+,v\rangle \leq \epsilon \|a^+-b^+\|\|v\|\}$ with vertex 
at the projected point $b^+=(\frac{3}{4}x,\frac{3}{4}x)\in B$
hits $B$ at points $b'\in B$ other than $b$ on the opposite side of $A$, regardless how small 
$\epsilon$ is chosen. And this cannot be prevented  by shrinking the neighborhood $\ball(x^*,\delta)$. 

Note that $A,B$ intersect $0$-separably at $(0,0)$, hence alternating projections converge linearly
by Theorem \ref{linear}. This cannot be obtained from the results in \cite{luke,bauschke1}.
\hfill $\square$
\end{example}

\begin{example}
\label{example2}
{\bf (Regularity cannot be dispensed with)}.
Following \cite{bauschke-noll},
consider the spiral $z(\phi)=(1+e^{-\phi})e^{i\phi}$, $\phi\in [0, \infty)$ in the complex plane 
which approaches the unit circle $S=\{|z|=1\}$
form outside. Define a sequence $z_n = z(\phi_n)$ 
with $\phi_1 < \phi_2 < \dots\to\infty$ such that $\|z_{n+1}-z_n\| < \|z_n-z_{n-1}\|\to 0$,
$P_{\{z_k:k\not=n\} \cup S}(z_n)=z_{n+1}$, and  such that every $z\in S$ is an accumulation point of the 
$z_n$.
In \cite{bauschke-noll} an explicit construction with these properties is obtained recursively
as
\begin{eqnarray}
\label{constr}
z_n = z(\phi_n),\quad 
\|z(\phi_{n+1})-z(\phi_n)\|=r_n, \quad
r_{n+1}=e^{-\phi_{n+1} } \frac{1-e^{-2\pi}}{2}.
\end{eqnarray}
 Let $A = \{z_{2n}:n\in \mathbb N\}\cup S$, $B=\{z_{2n-1}: n\in \mathbb N\} \cup S$, then
$A \cap B=S$. Note that for starting points $|z_0|>1$, the sequence of alternating projections between
$A$ and $B$ is a tail of the sequence $z_n$, so none of the alternating sequences converges. 
Note that $\angle(z_{n+1}-z_n,z_{n-1}-z_n) \to \pi$,
hence $A,B$ intersect $0$-separably at every $x^*\in S=A\cap B$. 
The CQ
in the sense of \cite{bauschke1} is satisfied  at every $x^*\in A \cap B$. Namely,
for $z\in S$, $N_A^B(z)=N_B^A(z)=\mathbb R_+(-i{z})$. Indeed, as $a_n= P_A(b_n)$ approaches
$z$, the direction $u_n= (b_n-a_n)/\|b_n-a_n\|$ approaches a direction perpendicular to $z$, and since the spiral turns counterclockwise,
this direction is  $-i{z}$. Therefore $N_A^B(z) \cap (-N_B^A(z))=\{0\}$ for every $z\in S$.

Since the sequence $z_n$ fails to converge,
we conclude that this must be  due to the lack of regularity at points in $S$. Indeed,
H\"older regularity fails for every $0\leq \sigma < 1$. 
This can be seen as follows. Since the angle $\angle(b-a^+,b^+-a^+)$ for the building
block $b \to a^+ \to b^+$ approaches $\pi$,
the corresponding angle $\beta = \angle(b-b^+,a^+-b^+)$
goes to 0, so $\cos\beta \to 1$, and for $\sigma \in (0,1)$
we cannot find $c>0$ such that $\cos\beta \leq \sqrt{c}r^\sigma$. Therefore, in order to assure
(\ref{empty}), we would need $b\not\in \ball(a^+,(1+c)r)$, where $r=\|a^+-b^+\|$. This however, would imply
linear convergence of the alternating sequence, which fails. As a consequence of
Proposition \ref{regular}, the other regularity concepts fail, {as does intrinsic transversality}.
\hfill $\square$
\end{example}

\begin{example}
\label{example3}
{\bf (Discrete spiral I).}
We consider a discrete approximation of the logarithmic spiral, generated by  $8$ equally spaced rays emanating
from the origin. Starting on one of the rays, we project perpendicularly on the 
neighboring ray, going counterclockwise. We label the projected points  $a_1,b_1,a_2,\dots$.
This defines two sets $A=\{a_i: i\in \mathbb N\}\cup \{(0,0)\}$ and 
$B=\{b_i:i\in \mathbb N\} \cup\{(0,0)\}$ with $A\cap B=\{(0,0)\}$ such
that $P_B(a_i)=b_i$ and $P_A(b_i)=a_{i+1}$. Every sequence of alternating projections between $A$ and $B$ 
not starting at the origin is a tail of the sequence $a_n,b_n$ and converges to $(0,0)$.

Since  $\alpha = \angle(b-a^+,b^+-a^+)=135^\circ$,  $A,B$ intersect  $0$-separably
at $x^*=(0,0)$.  We  check whether the intersection satisfies the CQ in the sense on \cite{bauschke1}. 
Consider one of the rays on which a point $a^+$ is situated. Then $u=b-a^+ \in \widehat{N}_A^B(a^+)$
is perpendicular to $a^+-x^*$, i.e., perpendicular to the ray in question. As $u$ is the same for all $a^+$
on that ray, we have $u\in N_A^B(0,0)$. Altogether, $N_A^B(0,0)={\rm lin}\{u_1,u_3,-u_1,-u_3\}$   for four directions
spaced $90^\circ$. Similarly, $N_B^A(0,0)=\{u_2,u_4,-u_2,-u_4\}$  spaced $90^\circ$, and intertwined 
with the directions of $N_A^B(0,0)$. We have $N_A^B(0,0)=-N_A^B(0,0)$, and similarly
for $N_B^A(0,0)$, and since $N_B^A(0,0)\cap N_A^B(0,0)=\{(0,0)\}$, the intersection does indeed satisfy the CQ in the sense
of \cite{bauschke1} for $\widetilde{A}=A$, $\widetilde{B}=B$.

How about regularity at $(0,0)$?
 Naturally, $A,B$ are not superregular
at $(0,0)$, because they are not  Clarke regular.
Concerning $(A,\epsilon,\delta)$-regularity of $B$ in the sense of \cite{bauschke1},  suppose in a building block
$b\to a^+\to b^+$ we wish to set up a cone with apex $b^+$
and axis $b^++\mathbb R_+(a^+-b^+)$ by  choosing its aperture small enough through the choice of $\epsilon$ such that all previous
points of $A$ are avoided, then we have to choose smaller and smaller angles $\beta$ to do this,
so  this type of regularity fails.

On the other hand, we have $\sigma$-H\"older regularity for every
$\sigma\in [0,1)$. Suppose we start
at $a_1=(1,0)$, then $b_1=(\frac{1}{2},\frac{1}{2})$ and $a_2=(0,\frac{1}{2})$. After a tour of $360^\circ$,
the spiral comes back to the horizontal ray $\mathbb R^+(1,0)$ at $a_5=(\frac{1}{16},0)$. So while at the beginning the
spiral turns within the square $[-1,1]^2$, from the second tour onward it will stay
in the square $[-\frac{1}{16},\frac{1}{16}]^2$. As the circle $\ball(b_1,\frac{7}{16}\sqrt{2})$
contains no points of the small square in its interior,  the distance of $b_1$ to the small square
being $R=\frac{7}{16}\sqrt{2}$, writing $R=(1+c)r$ we conclude that we can take
$c=\frac{7}{8}\sqrt{2}-1>0$ in (\ref{empty}). Now up to a scaling and a rotation the situation is precisely the same 
for {\em every} building block $a\to b \to a^+$ starting in a square of length $2 \|a\|$. After one $360^\circ$-tour
we end up at $a^{++++}$ on the same ray as $a$, and from there on
the spiral will stay in that smaller square of length $2 \frac{1}{16}\|a\| = 2\|a^{++++}\|$.
As a consequence of theorem \ref{linear}, the sequence
converges to $(0,0)$ with linear rate. None of the approaches
of \cite{luke,malick,bauschke1} allows to derive this.
\hfill $\square$
\end{example}

\begin{example}
\label{spiral2}
{\bf (Discrete spiral II)}. We can modify the above construction by fixing  $\phi \in (0, \frac{\pi}{4})$ and generating rays
$k\phi$, $k\in \mathbb N$. Turning counterclockwise, and keeping only the projected
points, we generate iterates $a_k,b_k$ with the property that 
$a_k$ has angle $2k\phi$ mod $2\pi$  with the horizontal, $b_k$ has angle $(2k+1)\phi$ mod $2\pi$. We put
$A=\{a_k: k\in \mathbb N \} \cup \{(0,0)\}$, $B=\{b_k: k\in \mathbb N \} \cup \{(0,0)\}$, then 
$A \cap B = \{(0,0)\}$ and $P_B(a_k)=b_k$, $P_A(b_k)=a_{k+1}$ by adapting the argument
in example \ref{example3}. The sequence represents again
a discrete version of the logarithmic spiral, turning inwards counterclockwise.  However, if we now choose
$\phi$ such that $\phi/(2\pi)$ is irrational, there will be no periodicity, and the set of directions
$a_k/\|a_k\|$ will be dense in $\mathbb S^1$, and so for $b_k/\|b_k\|$. We have 
$\angle (b^+-a^+,b-a^+)=\pi-\phi$, which means $A,B$ intersect $0$-separably at $(0,0)$.
They intersect {\em at an angle}, this angle being $\pi-\phi$.
However,
$A,B$ do not intersect linearly regularly in the sense of \cite{luke,bauschke1}. Indeed, let us fix
$\psi\in [0,2\pi)$ and $u=(\cos\psi,\sin\psi)$. Then there exist rays $2k\phi$ arbitrarily close to $\psi$
and $a_k$ on these rays, projected from $b_{k-1}$ on ray $(2k-1)\phi$. That means,
$u_k= (b_k-a_k)/\|b_k-a_k\|$ gets arbitrarily close to the direction $u^\perp = (-\sin\psi,\cos\psi)$,
so $u^\perp \in N_A^B(0,0)$. This shows $N_A^{\widetilde{B}}(0,0) = \mathbb R^2$ and
$N_B^{\widetilde{A}}(0,0)=\mathbb R^2$ for $\widetilde{A}=P_A(\partial B\setminus A)$, $\widetilde{B}=P_B(\partial A\setminus B)$, 
so linear regularity and extensions fail.
\hfill $\square$
\end{example}

\begin{example}
{\bf (Spiral and cylinder  \cite{douglas})}. We consider the cylinder
$B=\{(\cos\alpha,\sin\alpha,h): \alpha\in [0,2\pi], 1 \geq h\geq 0\}$ and the spiral 
$A = \{(1+e^{-t})\cos t,(1+e^{-t})\sin t, e^{-t/2}): t \geq 0 \} \cup S$, where
$S=\{(\cos \alpha,\sin\alpha,0): \alpha\in [0,2\pi]\}$. Clearly $A \cap B = S$. As shown 
in \cite{douglas},
any sequence of alternating projections between $A$ and $B$ started at $a\in A\setminus S$
wanders down following the spiral, turning infinitely often around the cylinder with
shrinking $a_n-b_n\to 0$. In particular,
every $x^*\in S$ is an accumulation point of $a_n,b_n$, so convergence fails. Since $B$ is clearly
H\"older regular with respect to $A$, we deduce that the angle condition (\ref{angle})
must fail, so in particular $A$ is not subanalytic. This is interesting, as $A$ is the 
projection of a semianalytic set in $\mathbb R^4$.  For a picture see \cite{douglas}.
\end{example}

\begin{example}
\label{failure}
{\bf (Failure of intrinsic transversality).}
We consider the sets $A=\{2^{-2n}: n\in \mathbb N\}\cup \{0\}$,
$B = \{2^{-2n+1}: n \in \mathbb N\}\cup \{0\}$ in $\mathbb R$, so that $A \cap B=\{0\}$.
The sequence of alternating projections is
$1,\frac{1}{2},\frac{1}{4},\dots$ and converges Q-linearly to 0. We have 
$N_A^B(0)=N_B^A(0)=\mathbb R_+$, hence $A,B$ intersect with the CQ in the sense of
\cite{bauschke1}  at 0,
hence also $0$-separably. Note that $B$ is not $(A,\epsilon,\delta)$-regular
at $0$ in the sense of \cite{bauschke1}, but it is $\sigma$-H\"older-regular for every $\sigma\in [0,1)$. Note that
intrinsic transversality fails here, because it uses the cones $N_A(a)$,
$N_B(b)$, which in this case are too large because they coincide with the whole line.

We modify this example as follows. Let $a_n=2^{-n}$,
$A=\{a_n: n\in \mathbb N\}\cup\{0\}$, $b_n = \frac{1}{2}(a_n+a_{n+1})-\delta_n$, 
$B = \{b_n: n\in \mathbb N\}\cup\{0\}$, where $\delta_n < 2^{-n}(a_n-b_n)$. Then
$\|a_{n+1}-b_n\|$ shrinks only by a factor $1-\delta_n\to 1$ with respect to
$\|b_n-a_n\|$, while shrinkage between $\|a_{n+1}-b_n\|$ and
$\|a_{n+1}-b_{n+1}\|$ is by a factor close to $\frac{1}{2}$. This shows that an
alternating sequence may converge R-linearly without a
fixed shrinkage factor $1-\kappa^2$ in every half step. Note that Theorem \ref{linear}
still applies in this case. 
\end{example}

\begin{example}\label{tangential:nonseparable}
We give an example where $A,B$ intersect tangentially, but not
$\omega$-separably for any $\omega\in [0,2)$. Let $f:\mathbb R\to \mathbb R$ be differentiable with $f'$
continuous at $0$, $f(0)=0$, $f(x)> 0$ for $x\not=0$,
and define
$A={\rm epi}f=\{(x,y)\in \mathbb R^2:  y \geq f(x)\}$, $B=\{(x,y)\in \mathbb R^2: y \leq 0\}$,
then $A \cap B=\{(0,0)\}$. We consider a building block
$b\to a^+\to b^+$. Let $b^+=(x,0)$, then $a^+=(x,f(x))$. Suppose $b=(y,0)$, then  $y = x + f(x)f'(x)$. 
Then the quotient $q$ in $(\ref{angle}')$ reads
\[
q(x)=
\frac{\sqrt{1+f'(x)^2}-1}{f(x)^\omega \sqrt{1+f'(x)^2}}
= \frac{f'(x)^2}{f(x)^\omega \sqrt{1+f'(x)^2} (\sqrt{1+f'(x)^2}+1)}
\leq \frac{f'(x)^2}{2 f(x)^\omega}.
\]
Therefore, if the angle condition (\ref{angle}) is to hold for some $\omega$, then
$\liminf_{x\to 0} \frac{f'(x)^2}{f(x)^{\omega}}\geq \gamma >0.$ It is possible to construct $f$ such that this fails
for every $\omega\in [0,2)$.
Take for instance
$$f(x) =\left\{\begin{array}{ll}
e^{-\frac{1}{x^2}} &\mbox{if } x\neq 0\\
0&\mbox{if } x=0
\end{array}\right.,
$$
then $q(x)  \leq 2 x^{-6} \exp(-x^{-2}(2-\omega)) \to 0$ as $x\to 0$ for $0 < \omega < 2$. Separability with $\omega=0$ is also impossible because $f'(x)\to 0$ as $x\to 0$. 
In conclusion, the sets $A$ and $B$ intersects tangentially,  but not separably for any $\omega\in[0,2)$.
\end{example}

\begin{example}
\label{weak}
Using the same function $f$ and $A,B$, observe that
for $\omega \geq 2$ the quotient $q(x)$ stays away from 0,
so that condition ($\ref{angle}'$) is satisfied.  
This explains why values $\omega \geq 2$ are not meaningful
in definition \ref{separable}.
\end{example}

\end{document}